
\documentclass[final,leqno,onefignum,onetabnum]{siamltex1213}
\usepackage{enumerate}
\usepackage{amsmath}
\usepackage{amsfonts}
\usepackage{amssymb}
\usepackage{graphicx}
\usepackage{cite}%
\newtheorem{notation}[theorem]{Notation}
\newtheorem{remark}[theorem]{Remark}\title{Exact recursive estimation of linear systems subject to bounded
disturbances}

\author{Robin Hill, Yousong Luo and Uwe Schwerdtfeger\thanks{Robin Hill and Yousong Luo are with the School of Mathematical and
Geospatial Sciences, RMIT University, Latrobe St, Melbourne, 3001, Australia (\email{r.hill@rmit.edu.au, yluo@rmit.edu.au}), Uwe Schwerdtfeger is with the Department of Mathematics, Chemnitz University,
Germany (\email{uwe.schwerdtfeger@mathematik.tu-chemnitz.de})}}

\begin{document}
\maketitle
\slugger{sicon}{xxxx}{xx}{x}{x--x}

\begin{abstract}
This paper addresses the classical problem of determining the sets of possible
states of a linear discrete-time system subject to bounded disturbances from
measurements corrupted by bounded noise. These so-called uncertainty sets
evolve with time as new measurements become available. We present an exact,
computationally simple procedure that propagates a point on the boundary of
the uncertainty set at some time instant to a set of points on the boundary of
the uncertainty set at the next time instant.
\end{abstract}

\begin{keywords} Estimation, linear programming, disturbance rejection, robustness\end{keywords}

\begin{AMS} 93E10 \end{AMS}

\pagestyle{myheadings}
\thispagestyle{plain}
\markboth{Exact recursive estimation of linear systems subject to bounded
disturbances}{Hill, Luo and Schwerdtfeger}

\section{Introduction}

If a linear, time-invariant dynamic system is driven by set-bounded process
noise, and has measurements corrupted with set-bounded observation noise, then
the set of current possible states of the system consistent with the
measurements up to the current time is termed the \textit{state uncertainty
set }(or simply \textit{uncertainty set}), or sometimes the \textit{guaranteed
state estimate}. An algorithm for determining the uncertainty set is sometimes
called a \textit{set-valued observer}. This set membership estimation problem
is fundamental and has many applications, for example in control under
constraints in the presence of noise
\cite{Bertsekas-etal-1971,glover-schweppe-1971}. It falls under the general
topic of set membership uncertainty, see \cite{Combettes-93}. Recently there
has been interest in combining stochastic and set-bounded disturbances
\cite{Henningsson-2008}. The uncertainty set is needed in all of these
applications. Uncertainty set estimation is also closely related to
non-stochastic approaches to system identification
\cite{Fogel-Huang-82,milanese-etal-editors-1996,Ninness_Goodwin-95,Savkin_Petersen-98}.%

The first results on recursive determination of the uncertainty set are in
\cite{Schweppe-1968,Witsenhausen-minimax-1968,Witsenhausen-poss-states-1968}.
See also \cite{delfour-mitter-69}. Since the appearance of these papers there
has appeared an extensive literature on the topic; see the survey papers
\cite{Blanchini-2006,Milanese-Vicino-91}.

Most research to date has been on schemes that construct approximations to the
uncertainty set, for example
\cite{alamo-etal-2005,Blanchini-Sznaier-2012,elia-dahl,Savkin_Petersen-98,Shamma_Kuang-1999,Tempo-1988,Voulgaris-1995,Yang_Li-2011}%
. In the system identification literature there are results on exact recursive
polytope determination, for example \cite{Mo-Norton-1990}, where useful
descriptions of evolving polytopic uncertainty sets are given. We have the
same goal, but a completely different algorithm. Exact schemes generally have
not been suitable for real-time implementation because of their computational
complexity. In this paper we present for the first time a procedure that is
exact, recursive and computationally simple. When a new measurement arrives,
points on the boundary of the uncertainty set at the current time are mapped
exactly to points on the boundary of the uncertainty set at the next time
instant. The number of points that can be propagated forwards in time this way
is restricted only by speed and storage constraints, the computational
requirements for propagating one point being very small.

If the process and observation noise are restricted pointwise-in-time by
inequality constraints, then with the processing of more measurements the
number of vertices possessed by the polytopic uncertainty set may increase,
decrease, or stay the same. Each vertex of the uncertainty set at one time
instant may be mapped to either zero, one, or two vertices, or to an edge, of
the uncertainty set at the next time instant. Even if memory limitations
preclude the determination of all vertices, knowing the exact location of a
large number of points on the boundary of the uncertainty set potentially will
provide useful information in a wide range of applications. Exact
determination of the uncertainty set should also be of value in theoretical
work and in simulations.

There is a connection between uncertainty set estimation and research on
$l_{1}$ optimal control; \cite{Nagpal-poolla-1992,Stoorvogel-1996} provide
interesting insights on this. In the robustness literature problems with the
same number of disturbances as measurements, and the same number of controls
as regulated outputs, are referred to as one-block problems. See
\cite{hurak-zdenek-bottcher-2006} for a recent discussion of the one-block
$l_{1}$ optimal control problem. Our estimation problem has two disturbances,
one measurement and, because in this paper we are not attempting the next step
of using the estimate for closed-loop feedback, no controls or regulated
outputs. It is therefore a 2-block problem, where the disturbances are
connected by convolution constraints. As explained in \cite{staf-1993},
multi-block $l_{1}$ control problems necessarily have convolution constraints,
one-block problems have no convolution constraints, and so-called
zero-interpolation constraints, which ensure stability of the closed loop
system, may or may not be present in multi-block problems. If the measurements
in our estimation system are identically zero, the artificial regulator system
that we set up and recursively solve is a 2-block $l_{1}$ optimal control row
problem with no zero-interpolation conditions. When the measurements are
non-zero the cost function for the regulator system is no longer the $l_{1}-$
norm, but it remains piece-wise linear and convex. Thus the heart of our
procedure can be interpreted as recursively solving a slight generalization of
a 2-block $l_{1}$ optimal control problem. The results in this paper build on
some of the ideas in \cite{HILL-Luo-Schwerdtfeger-2012,HILL-CDC05,Hill-2008}.

Although there is no notion of optimality in the definition of uncertainty
sets, our procedure is derived using optimization methods. The uncertainty
sets are interpreted as feasible sets for specially constructed optimization
problems, and optimal solutions to these programs are points on the boundaries
of the uncertainty sets.

\section{Problem formulation}

A linear, time-invariant, causal discrete-time scalar system $\mathbf{z}%
=P\mathbf{v}+\mathbf{w}$ is depicted in Fig. \ref{basicfig}%
\begin{figure}[ptb]%
\centering
\includegraphics[scale=0.6]{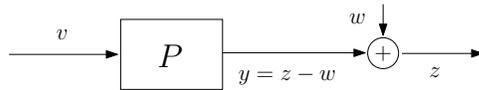}
\caption{Estimation system}%
\label{basicfig}%
\end{figure}
where $(v_{k})_{k=1}^{\infty},(z_{k})_{k=1}^{\infty}$ and $(w_{k}%
)_{k=1}^{\infty}$ are, respectively, the input disturbance to the plant $P$,
the measurement, and the measurement disturbance sequences. The plant output
sequence is $y_{k}=z_{k}-w_{k}.$ It is known \textit{a priori} that the
disturbances satisfy $\left\vert v_{k}\right\vert \leq1$ and $\left\vert
w_{k}\right\vert \leq1,$ and the initial state, at time $k=0,$ is given. The
restriction of $v_{k}$ and $w_{k}$ to the interval $\left[  -1,1\right]  $ is
made for notational convenience. The method to be described generalizes easily
to situations where $v_{k}$ is restricted to intervals of the form $\left[
v_{k}^{l},v_{k}^{u}\right]  ,$ and $w_{k}$ to $\left[  w_{k}^{l},w_{k}%
^{u}\right]  $, where $v^{l},v^{u},w^{l}$ and $w^{u}$ are \textit{a-priori}
given bounding sequences.

The state-space description best suited to our needs, given below, is related
to controllability form. The plant dynamics are also expressible, via the
transfer function representation of the system, as convolution constraints
relating $v$ and $y$; we shall make use of both of these system representations.

The problem addressed is: Given the \textit{a priori} information on
$w_{1},\ldots,w_{k},v_{1},\ldots,v_{k},$ the initial state $\mathbf{x}_{0},$
the measurement history $z_{1},\ldots,z_{k},$ and the plant dynamics, what are
the possible states at time $k,$ immediately after the measurement $z_{k}$ has
been received? The set of all such states, termed the uncertainty set at time
$k,$ will be denoted $S_{k},$ a convex polytope in $\mathbb{R}^{m},$ where $m$
is the order (McMillan degree) of the plant. Determining the set $S_{k}$ is an
estimation problem, and we shall refer to the system in Fig. \ref{basicfig} as
the estimation system.

\subsection{Notation and preliminaries\label{sectnotprelim}}

The boundary and interior of a set $S$ are denoted $\partial S$ and
$\operatorname*{int}S$, and $\emptyset$ denotes the empty set. The support
function of a convex, bounded non-empty subset $S$ of $\mathbb{R}^{m}$ is
$h_{S}(\mathbf{x}^{\ast})=\sup_{\mathbf{x}\in S}\left\langle \mathbf{x}^{\ast
},\mathbf{x}\right\rangle ,$ where $\mathbf{x}^{\ast}\in\mathbb{R}^{m}.$ The
cone $\left\{  \mathbf{x}^{\ast}:\left\langle \mathbf{x}^{\ast},\mathbf{x}%
\right\rangle =h_{S}(\mathbf{x}^{\ast}),\text{ }\mathbf{x}^{\ast}%
\neq\mathbf{0}\right\}  $ associated with $\mathbf{x}\in\partial S$ is denoted
$C_{S}^{O}\left(  \mathbf{x}\right)  .$ Given a vector $\mathbf{y=}\left(
y_{1},y_{2},\ldots\right)  $ and any $s\in\mathbb{N}^{+},$ $t\in\mathbb{N}%
^{+}$ satisfying $s<t,$ we denote $\left(  y_{s},y_{s+1},\ldots,y_{t}\right)
$ by $y_{s:t}.$ In matrix equations vectors are by default column vectors, so
for example $y_{s:t}$ occuring in a matrix equation would be a column vector,
and $y_{s:t}^{T}$\ is a row vector, where the superscript $T$ denotes
transpose. The vector of length $t+1$ whose first $t$ components are $y_{1:t}$
and whose last component is the scalar $y$ is denoted $\left(  y_{1:t}%
,y\right)  .$ The $\lambda$-transform (generating function) of an arbitrary
sequence $\mathbf{y}=(y_{k})_{k=1}^{\infty}$ is defined to be $\hat{y}%
(\lambda):=\sum_{k=1}^{\infty}y_{k}\lambda^{k-1}.$ Let $\mathbf{d}%
=d_{1:m+1}=(d_{1},\ldots,d_{m+1})$ and $\mathbf{n}=n_{1:m+1}=(n_{1}%
,\ldots,n_{m+1}),$ $m\geq1,$ be real vectors. The Toeplitz Bezoutian
$B_{T}(\mathbf{n},\mathbf{d})=\left(  b_{ij}\right)  _{i,j=1}^{m}$ of the
vectors $\mathbf{n},\mathbf{d}$ (or the polynomials $\hat{\mathbf{n}}%
,\hat{\mathbf{d}}$) is the $m\times m$ matrix with the generating polynomial
\begin{equation}
\sum_{i,j=1}^{m}b_{ij}t^{i-1}s^{j-1}=\frac{s^{m}\hat{\mathbf{n}}\left(
1/s\right)  \hat{\mathbf{d}}\left(  t\right)  -s^{m}\hat{\mathbf{d}}\left(
1/s\right)  \hat{\mathbf{n}}\left(  t\right)  }{1-st}. \label{def:BTdn}%
\end{equation}
Denote by $D$ and $N$ the infinite, banded, lower-triangular Toeplitz matrices
whose first columns are $\mathbf{d}$ and $\mathbf{n}$, respectively. Define
the following submatrices of $D$ and $N$
\[
D_{L}:=\left[
\begin{array}
[c]{cccc}%
d_{1} & 0 & \cdots & 0\\
d_{2} & d_{1} & \ddots & \vdots\\
\vdots & \ddots & \ddots & 0\\
d_{m} & \cdots & d_{2} & d_{1}%
\end{array}
\right]  \qquad D_{U}:=\left[
\begin{array}
[c]{cccc}%
d_{m+1} & d_{m} & \cdots & d_{2}\\
0 & d_{m+1} & \ddots & \vdots\\
\vdots & \ddots & \ddots & d_{m}\\
0 & \cdots & 0 & d_{m+1}%
\end{array}
\right]
\]%
\[
N_{L}:=\left[
\begin{array}
[c]{cccc}%
n_{1} & 0 & \cdots & 0\\
n_{2} & n_{1} & \ddots & \vdots\\
\vdots & \ddots & \ddots & 0\\
n_{m} & \cdots & n_{2} & n_{1}%
\end{array}
\right]  \qquad N_{U}:=\left[
\begin{array}
[c]{cccc}%
n_{m+1} & n_{m} & \cdots & n_{2}\\
0 & n_{m+1} & \ddots & \vdots\\
\vdots & \ddots & \ddots & n_{m}\\
0 & \cdots & 0 & n_{m+1}%
\end{array}
\right]  .
\]
More generally, for any $k>0,$ the $k\times k$ upper left hand corner
submatrix of $D$ is denoted $D_{k}.$ The matrix $N_{k}$ is defined similarly.
Thus $D_{m}=D_{L}$ and $N_{m}=N_{L}.$

One form of the Gohberg-Semencul formulas \cite{fuhrm,gohberg-semencul-1972} states%

\begin{equation}
B_{T}(\mathbf{n},\mathbf{d})=D_{L}N_{U}-N_{L}D_{U}=N_{U}D_{L}-D_{U}N_{L},
\label{def:BTmatrix}%
\end{equation}
and $B_{T}(\mathbf{n},\mathbf{d})$ is invertible if and only if $\hat
{\mathbf{n}}$ and $\hat{\mathbf{d}}$ are coprime. From now on we abbreviate
$B_{T}(\mathbf{n},\mathbf{d})$ to $B_{T},$ and $B_{T}^{-1}$ is the inverse of
$B_{T}.$ The first row of $B_{T}$ plays an important role and will be denoted
by $C,$ so $C:=\left(  d_{1}\left[
\begin{array}
[c]{c}%
n_{m+1}\\
\vdots\\
n_{2}%
\end{array}
\right]  -n_{1}\left[
\begin{array}
[c]{c}%
d_{m+1}\\
\vdots\\
d_{2}%
\end{array}
\right]  \right)  ^{T}$. See \cite{Heinig-Rost-2010} for properties of Bezoutians.

\subsection{Transfer function description}

The plant for the estimation system has the transfer function representation
$P(\lambda)=\hat{\mathbf{n}}(\lambda)/\hat{\mathbf{d}}(\lambda)$ where
\begin{align}
\hat{\mathbf{n}}(\lambda)  &  =n_{1}+n_{2}\lambda+n_{3}\lambda^{2}%
+\cdots+n_{m+1}\lambda^{m}\label{polynoms}\\
\hat{\mathbf{d}}(\lambda)  &  =1+d_{2}\lambda+d_{3}\lambda^{2}+\cdots
+d_{m+1}\lambda^{m},\nonumber
\end{align}
$m\geq1$ is an integer, $\hat{\mathbf{n}}(\lambda)$ and $\hat{\mathbf{d}%
}(\lambda)$ are assumed to be coprime polynomials with real coefficients, and
it is assumed that both the plant $P(\lambda)$ and the plant $P^{\ast}%
(\lambda)$ for the regulator system, defined below, are causal, implying
$d_{1}\neq0$ and $d_{m+1}\neq0$. Without loss of generality we take $d_{1}=1.$
Assuming zero initial conditions, $\mathbf{y}$ and $\mathbf{v}$ are related
by
\begin{equation}
\hat{\mathbf{d}}(\lambda)\hat{y}(\lambda)=\hat{\mathbf{n}}(\lambda)\hat
{v}(\lambda), \label{conv}%
\end{equation}
or equivalently $\mathbf{d}\ast\mathbf{y}=\mathbf{n}\ast\mathbf{v},$ where
$\ast$ denotes convolution.

Equating like powers of $\lambda$ on both sides of (\ref{conv}), and allowing
the possibility of non-zero initial conditions, we have
\begin{equation}
D\mathbf{y}-N\mathbf{v}=\left[
\begin{array}
[c]{c}%
D_{L}y_{1:m}-N_{L}v_{1:m}\\
0
\end{array}
\right]  . \label{conv_vy}%
\end{equation}
Equations (\ref{conv_vy}) describe how the signals $\mathbf{y}$ and
$\mathbf{v}$ are related in the estimation system. In the state-space
representation to be introduced next, $B_{T}^{-1}\left(  D_{L}y_{1:m}%
-N_{L}v_{1:m}\right)  $ is the initial state $\mathbf{x}_{0}.$

\subsection{State-space representations\label{sssection}}

The state-space description of the estimation system we employ is sometimes
denoted second controllability canonical form (\cite{LUEN-1979}, p 293). It
is
\begin{align}
\mathbf{x}_{k}  &  =A\mathbf{x}_{k-1}+Bv_{k}\nonumber\\
y_{k}  &  =C\mathbf{x}_{k-1}+D_{1}v_{k}\label{ss2}\\
z_{k}  &  =y_{k}+w_{k}\nonumber
\end{align}
where
\begin{subequations}\label{ABCD}
\begin{align}
A  &  =\left[
\begin{array}
[c]{cc}%
0 & I_{m-1}\\
-d_{m+1} &
\begin{array}
[c]{cc}%
\cdots & -d_{2}%
\end{array}
\end{array}
\right]  ,\text{ }B=\left[
\begin{array}
[c]{c}%
0\\
1
\end{array}
\right]  ,\\
C^{T}  &  =\left[
\begin{array}
[c]{c}%
n_{m+1}\\
\vdots\\
n_{2}%
\end{array}
\right]  -\left[
\begin{array}
[c]{c}%
d_{m+1}\\
\vdots\\
d_{2}%
\end{array}
\right]  n_{1},\text{ }D_{1}=n_{1};
\end{align}
\end{subequations}
and%
\begin{equation}
\mathbf{x}_{k}=\mathbf{x}_{k}(\mathbf{y},\mathbf{v}):=\left\{
\begin{array}
[c]{c}%
B_{T}^{-1}\left[  D_{L}y_{k+1:k+m}-N_{L}v_{k+1:k+m}\right]  \text{ for }%
k\geq0\\
B_{T}^{-1}\left[  -D_{U}y_{k-m+1:k}+N_{U}v_{k-m+1:k}\right]  \text{ for }k\geq
m
\end{array}
\right.  . \label{statedef}%
\end{equation}
In (\ref{ABCD}) $I_{m-1}$ denotes the $m-1$ dimensional identity matrix, and $0$ denotes a column vector of zeros of length $m-1$.
The fact that $D_{L}y_{k+1:k+m}-N_{L}v_{k+1:k+m}=-D_{U}y_{k-m+1:k}%
+N_{U}v_{k-m+1:k}$ for $k\geq m$ follows from (\ref{conv_vy}).

We will also require a state-space realization of a related system, which we
shall refer to as the regulator system. The input and output sequences are
respectively $(y_{k}^{\ast})_{k=1}^{\infty}$ and $(v_{k}^{\ast})_{k=1}%
^{\infty},$ and the plant regulator system, denoted $P^{\ast},$ has the
transfer function representation%

\begin{equation}
P^{\ast}(\lambda)=-\frac{\mathbf{\tilde{n}}(\lambda)}{\mathbf{\tilde{d}%
}(\lambda)} \label{regconv}%
\end{equation}
where $\mathbf{\tilde{n}}=\left(  n_{m+1},\ldots,n_{1}\right)  $ and
$\mathbf{\tilde{d}}=\left(  d_{m+1},\ldots,d_{1}\right)  .$ A minimal
state-space realization of the regulator system is%

\begin{align}
\mathbf{x}_{k}^{\ast}  &  =A^{\ast}\mathbf{x}_{k-1}^{\ast}+B^{\ast}y_{k}%
^{\ast}\label{adjointstate1}\\
v_{k}^{\ast}  &  =C^{\ast}\mathbf{x}_{k-1}^{\ast}+D_{1}^{\ast}y_{k}^{\ast}
\label{adjointstate}%
\end{align}%
\begin{align}
A^{\ast}  &  =\left[
\begin{array}
[c]{cc}%
-d_{m}/d_{m+1} & I_{m-1}\\%
\begin{array}
[c]{c}%
\vdots\\
-1/d_{m+1}%
\end{array}
&
\begin{array}
[c]{c}%
\\
0
\end{array}
\end{array}
\right]  ,\text{ }B^{\ast}=\left[
\begin{array}
[c]{c}%
n_{m}\\
\vdots\\
n_{1}%
\end{array}
\right]  -\left[
\begin{array}
[c]{c}%
d_{m}\\
\vdots\\
1
\end{array}
\right]  \frac{n_{m+1}}{d_{m+1}},\label{ABCDstardef}\\
C^{\ast}  &  =\left[
\begin{array}
[c]{cccc}%
-1/d_{m+1} & 0 & \cdots & 0
\end{array}
\right]  ,\text{ }D_{1}^{\ast}=\frac{-n_{m+1}}{d_{m+1}};\nonumber
\end{align}
and%
\begin{equation}
\mathbf{x}_{k}^{\ast}=\mathbf{x}_{k}^{\ast}(\mathbf{y}^{\ast},\mathbf{v}%
^{\ast}):=\left\{
\begin{array}
[c]{c}%
-N_{U}^{T}y_{k+1:k+m}^{\ast}-D_{U}^{T}v_{k+1:k+m}^{\ast}\text{ for }k\geq0\\
N_{L}^{T}y_{k-m+1:k}^{\ast}+D_{L}^{T}v_{k-m+1:k}^{\ast}\text{ for }k\geq m
\end{array}
\right.  , \label{state_def_star}%
\end{equation}
where $\mathbf{x}_{k}^{\ast}$ is the \textit{regulator state }at time
$k$\textit{. }From (\ref{regconv}) we have\textbf{ }$\mathbf{\tilde{n}}%
\ast\mathbf{y}^{\ast}+\mathbf{\tilde{d}}\ast\mathbf{v}^{\ast}=\left(  \left[
y_{1:m}^{\ast}\right]  ^{T}N_{U}+\left[  v_{1:m}^{\ast}\right]  ^{T}%
D_{U},0,\ldots\right)  ^{T}$ where the first component of the right hand
side vector is $-\mathbf{x}_{0}^{\ast}.$ These state-space representations are
in principle well known \cite{kailath_linsys,LUEN-1979,pold_willems}.

\subsection{The uncertainty set and worst-case disturbances}

The estimation system at time zero is in the state $\mathbf{x}_{0},$ so
$D_{L}y_{1:m}-N_{L}v_{1:m}=B_{T}\mathbf{x}_{0}.$ From the input-output
description (\ref{conv_vy}), after $k\geq2m$ measurements have been processed
$y_{1:k}$ and $v_{1:k}$ are related by%
\begin{equation}
\left[
\begin{array}
[c]{cccc}%
D_{L} &  &  & \\
D_{U} & D_{L} &  & \\
& \ddots & \ddots & \\
&  & D_{U} & D_{L}%
\end{array}
\right]  \left[
\begin{array}
[c]{c}%
y_{1}\\
\vdots\\
y_{k}%
\end{array}
\right]  -\left[
\begin{array}
[c]{cccc}%
N_{L} &  &  & \\
N_{U} & N_{L} &  & \\
& \ddots & \ddots & \\
&  & N_{U} & N_{L}%
\end{array}
\right]  \left[
\begin{array}
[c]{c}%
v_{1}\\
\vdots\\
v_{k}%
\end{array}
\right]  =\left[
\begin{array}
[c]{c}%
B_{T}\mathbf{x}_{0}\\
0\\
\vdots\\
0
\end{array}
\right]  , \label{conv_lengthk}%
\end{equation}
where here and later it is \textit{not} necessarily the case that $k$ be an
integer multiple of $m.$ In the notation of Section \ref{sectnotprelim},
(\ref{conv_lengthk}) is $D_{k}y_{1:k}-N_{k}v_{1:k}=\left[
\begin{array}
[c]{cccc}%
B_{T}\mathbf{x}_{0} & 0 & \cdots & 0
\end{array}
\right]  ^{T}.$ The uncertainty set $S_{k}$ is then given by%
\begin{equation}
S_{k}=\left\{
\begin{array}
[c]{c}%
\mathbf{x}\in\mathbb{R}^{m}:B_{T}\mathbf{x=}B_{T}\mathbf{x}_{k}(\mathbf{y}%
,\mathbf{v})=-D_{U}y_{k-m+1:k}+N_{U}v_{k-m+1:k},\\
\left\Vert v_{1:k}\right\Vert _{\infty}\leq1,\left\Vert y_{1:k}-z_{1:k}%
\right\Vert _{\infty}\leq1,\text{ and (\ref{conv_lengthk}) holds.}%
\end{array}
\right\}  \label{Sk1}%
\end{equation}

Following Witsenhausen, \cite{Witsenhausen-poss-states-1968}, $S_{k}$ is given
recursively in terms of $S_{k-1}$ and the new observation $z_{k}$ by
\begin{equation}
S_{k}=\left\{
\begin{array}
[c]{c}%
\mathbf{x}_{k}:\mathbf{x}_{k-1}\in S_{k-1},\text{ }\mathbf{x}_{k}%
=A\mathbf{x}_{k-1}+Bv_{k},\text{ }y_{k}=C\mathbf{x}_{k-1}+D_{1}v_{k},\\
\left\vert v_{k}\right\vert \leq1,\text{ }\left\vert y_{k}-z_{k}\right\vert
\leq1.
\end{array}
\right\}  \label{wits_recurs}%
\end{equation}

For states $\mathbf{x}_{k-1}$ and $\mathbf{x}_{k}$ related as in
(\ref{wits_recurs}) we shall say that $\mathbf{x}_{k-1}$ is a
\textit{precursor} of $\mathbf{x}_{k}$, and $\mathbf{x}_{k}$ is a
\textit{successor} to $\mathbf{x}_{k-1}.$

\begin{definition}
\label{defprecursor}The state $\mathbf{x}_{k-1}\in S_{k-1}$ is said to be a
\textit{precursor of the state }$\mathbf{x}_{k}\in S_{k},$ $\mathbf{x}_{k-1}$
is propagated to $\mathbf{x}_{k},$ and $\mathbf{x}_{k}$ is a successor to
$\mathbf{x}_{k-1},$ if there exists a scalar $v$ satisfying $\left\vert
v\right\vert \leq1$ and $\left\vert C\mathbf{x}_{k-1}+D_{1}v-z_{k}\right\vert
\leq1$ for which $\mathbf{x}_{k}=A\mathbf{x}_{k-1}+Bv$.
\end{definition}

Clearly every $\mathbf{x}_{k}\in S_{k}$ is a successor to some $\mathbf{x}%
_{k-1}\in S_{k-1}.$ The following Proposition follows directly from the
preceding definitions.

\begin{proposition}
\label{prop2}The vector $\mathbf{x}_{k}$ is a successor to $\mathbf{x}_{k-1}$
if and only if there exists $(y_{1:k},v_{1:k})$ satisfying (\ref{conv_lengthk}%
), $\left\Vert v_{1:k}\right\Vert _{\infty}\leq1,$ $\left\Vert y_{1:k}%
-z_{1:k}\right\Vert _{\infty}\leq1,$ $\mathbf{x}_{k}=\mathbf{x}_{k}\left(
y_{1:k},v_{1:k}\right)  $ and $\mathbf{x}_{k-1}=\mathbf{x}_{k-1}\left(
y_{1:k},v_{1:k}\right)  .$
\end{proposition}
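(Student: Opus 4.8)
The plan is to show the two descriptions of the successor relation — the one‑step recursion from Definition \ref{defprecursor} and the global description in terms of $(y_{1:k},v_{1:k})$ — coincide, essentially by unwinding the state‑space equations \eqref{ss2} into the input–output form \eqref{conv_lengthk}. First I would prove the forward ("only if") direction. Suppose $\mathbf{x}_k$ is a successor to $\mathbf{x}_{k-1}$, so there is a scalar $v$ with $|v|\le 1$, $|C\mathbf{x}_{k-1}+D_1 v-z_k|\le 1$, and $\mathbf{x}_k=A\mathbf{x}_{k-1}+Bv$. Since $\mathbf{x}_{k-1}\in S_{k-1}$, the characterization \eqref{Sk1} of $S_{k-1}$ (with $k$ replaced by $k-1$) furnishes $(y_{1:k-1},v_{1:k-1})$ with $\|v_{1:k-1}\|_\infty\le1$, $\|y_{1:k-1}-z_{1:k-1}\|_\infty\le1$, satisfying \eqref{conv_lengthk} at length $k-1$, and with $\mathbf{x}_{k-1}=\mathbf{x}_{k-1}(y_{1:k-1},v_{1:k-1})$ in the sense of \eqref{statedef}. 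I then set $v_k:=v$ and $y_k:=C\mathbf{x}_{k-1}+D_1 v_k$. The point is to verify that the extended pair $(y_{1:k},v_{1:k})$ still satisfies \eqref{conv_lengthk}, now at length $k$, and that $\mathbf{x}_k=\mathbf{x}_k(y_{1:k},v_{1:k})$ and $\mathbf{x}_{k-1}=\mathbf{x}_{k-1}(y_{1:k},v_{1:k})$. The norm constraints on the extended pair are immediate from $|v_k|\le1$ and $|y_k-z_k|\le1$.

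The technical heart is the bookkeeping with the Toeplitz/Bezoutian identities. Adding the new row block to \eqref{conv_lengthk} contributes the equation $D_U y_{k-m+1:k-1}\text{-block} + D_L y_{\cdot} - (N_U v_{\cdot}+N_L v_{\cdot})=0$ in the appropriate shifted positions; this is exactly the relation obtained by equating powers of $\lambda$ in \eqref{conv} shifted by $k$, i.e. \eqref{conv_vy} applied at index $k$. I would show this shifted convolution identity is equivalent, via the Gohberg–Semencul formula \eqref{def:BTmatrix} and the two expressions for $\mathbf{x}_k$ in \eqref{statedef}, to the single scalar update $y_k=C\mathbf{x}_{k-1}+D_1 v_k$ together with $\mathbf{x}_k=A\mathbf{x}_{k-1}+Bv_k$; in other words, \eqref{ss2} is precisely the state‑space rendering of appending one row block to \eqref{conv_lengthk}. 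Concretely: using the "$k\ge m$" formula $B_T\mathbf{x}_{k-1}=-D_U y_{k-m:k-1}+N_U v_{k-m:k-1}$ and the "$k\ge 0$" formula for $\mathbf{x}_k$ in terms of $y_{k+1:k+m},v_{k+1:k+m}$ is awkward because those future samples are not yet pinned down; instead I would run the argument with the "$k\ge m$" representation of both $\mathbf{x}_{k-1}$ and $\mathbf{x}_k$, whose difference telescopes after multiplying by $A$ and using the companion structure \eqref{ABCD}, leaving exactly the scalar equation for $y_k$. The reverse ("if") direction is then essentially a restriction: given $(y_{1:k},v_{1:k})$ as in the statement, the truncation $(y_{1:k-1},v_{1:k-1})$ satisfies the length‑$(k-1)$ version of \eqref{conv_lengthk} and the norm bounds, so $\mathbf{x}_{k-1}(y_{1:k},v_{1:k})\in S_{k-1}$; reading off the last row block of \eqref{conv_lengthk} and re‑expressing it through \eqref{def:BTmatrix} and \eqref{statedef} recovers $\mathbf{x}_k=A\mathbf{x}_{k-1}+Bv_k$ and $y_k=C\mathbf{x}_{k-1}+D_1v_k$ with $v:=v_k$ witnessing Definition \ref{defprecursor}.

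I expect the main obstacle to be purely notational rather than conceptual: carefully tracking the shifted index ranges $y_{k-m+1:k}$ versus $y_{k-m:k-1}$ in the two branches of \eqref{statedef}, and checking that the Gohberg–Semencul identities \eqref{def:BTmatrix} convert the block‑Toeplitz row of \eqref{conv_lengthk} into the companion‑matrix recursion \eqref{ss2} with the correct $C$ and $D_1$ from \eqref{ABCD}. A clean way to sidestep most of this is to observe that \eqref{conv_lengthk} at length $k$ is literally \eqref{conv_lengthk} at length $k-1$ with one appended row block, that \eqref{statedef} defines the same $\mathbf{x}_j$ regardless of how long a consistent $(\mathbf{y},\mathbf{v})$ we use, and that the equivalence of \eqref{ss2} with the appended row block is already implicit in the derivation of the state‑space model in Section \ref{sssection}; the Proposition is then little more than assembling \eqref{Sk1}, \eqref{wits_recurs} and Definition \ref{defprecursor}. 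Accordingly I would keep the written proof short, citing \eqref{conv_vy}, \eqref{def:BTmatrix}, \eqref{statedef} and \eqref{ss2}, and relegating the index arithmetic to a one‑line remark.
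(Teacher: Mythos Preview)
Your proposal is correct and in spirit matches the paper, which dispatches this result in one line: ``The following Proposition follows directly from the preceding definitions.'' In particular, the paper does not reprove the equivalence between the one-step recursion \eqref{ss2} and appending a row block to \eqref{conv_lengthk}; that equivalence is taken as already established by the construction of the state-space realization in Section~\ref{sssection} (specifically the identity below \eqref{statedef}), so all that remains is to combine \eqref{Sk1} for $S_{k-1}$, Definition~\ref{defprecursor}, and \eqref{wits_recurs}. Your final paragraph lands on exactly this reading, so the only divergence is that the Gohberg--Semencul bookkeeping you outline in the middle is unnecessary here --- it was already absorbed into the derivation of \eqref{statedef} and \eqref{ss2}, and re-deriving it would duplicate Section~\ref{sssection}.
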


At time $k$ any state $\mathbf{x}_{k}\in S_{k}$ is associated with possibly
non-unique disturbance histories $v_{1:k}$ and $w_{1:k}.$ Specifying one of
the disturbance histories uniquely determines the other, if the measurement
history $z_{1:k}$ and the initial state $\mathbf{x}_{0}$ are known. Thus the
state at time $k$ can be expressed in terms of the initial state and
$v_{1:k}.$ From (\ref{ss2}) we have
\begin{align}
\mathbf{x}_{k}  &  =A^{k}\mathbf{x}_{0}+%
{\displaystyle\sum\limits_{j=0}^{k-1}}
A^{j}Bv_{k-j}\label{state_meas}\\
w_{k}  &  =z_{k}-CA^{k-1}\mathbf{x}_{0}-C%
{\displaystyle\sum\limits_{j=0}^{k-2}}
A^{j}Bv_{k-j-1}-D_{1}v_{k}. \label{state_meas1}%
\end{align}

Every state $\mathbf{x}_{k}$ on the boundary of $S_{k}$ is determined, through
(\ref{state_meas}), by a so-called \textquotedblleft
worst-case\textquotedblright\ disturbance sequence $v_{1:k}$.

\begin{definition}
The signal $v_{1:k}$ is said to be a \textit{worst-case disturbance associated
with }$\mathbf{x}_{k}$ if $\mathbf{x}_{k}$ given by (\ref{state_meas})
satisfies $\mathbf{x}_{k}\in\partial S_{k}.$
\end{definition}

We will also say $\left(  w_{1:k},v_{1:k}\right)  $ are worst-case
disturbances associated with\textit{ }$\mathbf{x}_{k}$ if $v_{1:k}$ is a
worst-case disturbance associated with\textit{ }$\mathbf{x}_{k}$ and
(\ref{state_meas1}) holds.

In \cite{Witsenhausen-poss-states-1968} primal and dual recursive procedures
based on (\ref{wits_recurs}) are derived; they require manipulations of sets,
a computationally difficult task. Our recursion operates not on the whole set
$S_{k-1}$, but rather only on those boundary points of $S_{k-1}$ that are
precursors of boundary points of $S_{k}.$ We also apply the equation
$\mathbf{x}_{k}=A\mathbf{x}_{k-1}+Bv_{k},$ but only after first identifying
all suitable $v_{k}.$ By this is meant, for a given $\mathbf{x}_{k-1}\in
S_{k-1},$ finding all $v_{k}$ satisfying $\left\vert v_{k}\right\vert \leq1,$
$\left\vert y_{k}-z_{k}\right\vert \leq1$ having the property $A\mathbf{x}%
_{k-1}+Bv_{k}\in\partial S_{k}.$ Thus $\left(  z_{1:k}-y_{1:k},v_{1:k}\right)
$ are worst-case disturbances associated with $\mathbf{x}_{k},$ and
$\mathbf{x}_{k-1}$ is a precursor of $\mathbf{x}_{k}=A\mathbf{x}_{k-1}%
+Bv_{k}.$ The recursion we derive is exact and computationally simple. It is
novel in the uncertainty set membership literature in that primal and dual
recursions are intimately linked.

A description of the dual recursion is aided by some notation.

\begin{definition}
\label{defCone}Let $S$ be a polytope. The cone $\left\{  \mathbf{x}^{\ast
}:\left\langle \mathbf{x}^{\ast},\mathbf{x}\right\rangle =h_{S}(\mathbf{x}%
^{\ast}),\text{ }\mathbf{x}^{\ast}\neq\mathbf{0}\right\}  $ associated with
$\mathbf{x}\in\partial S$ is denoted $C_{S}^{O}\left(  \mathbf{x}\right)  .$
\end{definition}

Thus $C_{S}^{O}\left(  \mathbf{x}\right)  $ contains the directions of all
hyperplanes which touch $S$\ at $\mathbf{x.}$\ It is a basic result in the
theory polytopes that $C_{S}^{O}\left(  \mathbf{x}\right)  $ is non-empty.

While the primal recursion propagates $\mathbf{x}_{k-1}\in\partial S_{k-1}$ to
$\mathbf{x}_{k}\in\partial S_{k},$ the dual recursion propagates a regulator
state $\mathbf{x}_{k-1}^{\ast}\in C_{S_{k-1}}^{O}\left(  \mathbf{x}%
_{k-1}\right)  $ to $\mathbf{x}_{k}^{\ast}\in C_{S_{k}}^{O}\left(
\mathbf{x}_{k}\right)  .$ \linebreak See Fig. \ref{basic_conv}. The hyperplane with
normal $\mathbf{x}_{k-1}^{\ast}$ supports $S_{k-1}$ at $\mathbf{x}_{k-1}$.
Precursors $\mathbf{x}_{k-1}\in S_{k-1}$ of points $\mathbf{x}_{k}\in\partial
S_{k}$ for which $\mathbf{x}_{k-1}\in\partial S_{k-1}$ are most useful
because, as will be shown later, $S_{k}$ is the convex hull of the set
containing all propagations of all such $\mathbf{x}_{k-1}.$ There may also be
precursors $\mathbf{x}_{k-1}\in S_{k-1}$ of points $\mathbf{x}_{k}\in\partial
S_{k}$ for which $\mathbf{x}_{k-1}\notin\partial S_{k-1}.$ They will be
considered in Section \ref{sectinterior}.%

\begin{figure}[ptb]%
\centering
\includegraphics[scale=0.7]{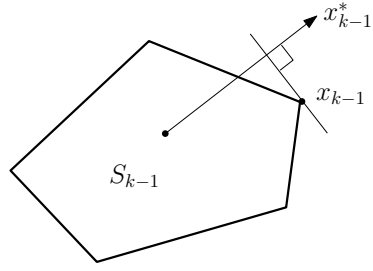}
\caption{The vectors $\mathbf{x}_{k-1}\in\partial S_{k-1}$ and $\mathbf{x}%
_{k-1}^{\ast}\in C_{S_{k-1}}^{O}(\mathbf{x}_{k-1})$ are propagated to
$\mathbf{x}_{k}\in\partial S_{k}$ and $\mathbf{x}_{k}^{\ast}\in C_{S_{k}}%
^{O}(\mathbf{x}_{k})$ by the measurement $z_{k}.$}%
\label{basic_conv}%
\end{figure}

\section{Statement of the procedure for propagating states\label{sect_recproc}%
}

From now on we will assume $\operatorname*{int}S_{k-1}\neq\emptyset$. In order
to state the procedure for propagating points $\mathbf{x}_{k-1}\in\partial
S_{k-1}$ we first introduce some definitions.


\begin{definition}
\label{def_align}The scalar pair $\left(  y,v\right)  $ is said to be aligned
at time $k$ with the scalar pair $\left(  y^{\ast},v^{\ast}\right)  $ if
\begin{equation}%
\begin{array}
[c]{c}%
v^{\ast}>0\Rightarrow v=1\\
v^{\ast}<0\Rightarrow v=-1\\
\left\vert v\right\vert <1\Rightarrow v^{\ast}=0
\end{array}
\label{align_defv}%
\end{equation}
and%
\begin{equation}%
\begin{array}
[c]{c}%
y^{\ast}>0\Rightarrow y=1+z_{k}\\
y^{\ast}<0\Rightarrow y=-1+z_{k}\\
\left\vert y-z_{k}\right\vert <1\Rightarrow y^{\ast}=0.
\end{array}
\label{align_defy}%
\end{equation}

\end{definition}

This definition can be extended in a natural way to alignment between pairs of
vector sequences. Thus the vector pair $\left(  y_{1:k},v_{1:k}\right)  $ is
aligned with the pair $\left(  y_{1:k}^{\ast},v_{1:k}^{\ast}\right)  $ if, for
all $j$, $\left(  y_{j},v_{j}\right)  $ is aligned at time $j$ with $\left(
y_{j}^{\ast},v_{j}^{\ast}\right)  .$

Given three scalars, a set consisting of quadruples of scalars is now defined.
It will play a central role.

\begin{definition}
\label{defM}Given scalars $s,$ $t$ and $z_{k},$ the set $M$ is%
\[
M\left(  s,t,z_{k}\right)  :=\left\{
\begin{array}
[c]{l}%
\mathbf{q}=\left(  v,y,v^{\ast},y^{\ast}\right)  \text{ satisfying}\\
1.\text{ \ }\left\vert v\right\vert \leq1,\left\vert y-z_{k}\right\vert
\leq1;\\
2.\text{ \ }y-n_{1}v=s;\\
3.\text{ \ }d_{m+1}v^{\ast}+n_{m+1}y^{\ast}=-t;\text{ and}\\
4.\text{ }\left(  y,v\right)  \text{ is aligned at time }k\text{ with }\left(
y^{\ast},v^{\ast}\right)  .
\end{array}
\right\}
\]

\end{definition}

The following Theorem, to be proved in Section \ref{combrecSect}, shows the
basic recursive idea, and the significance of the set $M.$

\begin{theorem}
\label{thmsuccessor}Suppose $\mathbf{x}_{k-1}\in\partial S_{k-1}$ and
$\mathbf{x}_{k-1}^{\ast}\in C_{S_{k-1}}^{O}(\mathbf{x}_{k-1})$. If \linebreak $\left(
v_{k},y_{k},v_{k}^{\ast},y_{k}^{\ast}\right)  \in M\left(  C\mathbf{x}%
_{k-1},\left(  \mathbf{x}_{k-1}^{\ast}\right)  _{1},z_{k}\right)  $ and
$\mathbf{x}_{k}^{\ast}:=A^{\ast}\mathbf{x}_{k-1}^{\ast}+B^{\ast}y_{k}^{\ast
}\neq\mathbf{0}$, then $\mathbf{x}_{k}:=A\mathbf{x}_{k-1}+Bv_{k}\in\partial
S_{k}$, $\mathbf{x}_{k}$ is a successor to $\mathbf{x}_{k-1}$, and
$\mathbf{x}_{k}^{\ast}\in C_{S_{k}}^{O}(\mathbf{x}_{k}).$%

\begin{figure}[ptb]%
\centering
\includegraphics[scale=0.54]{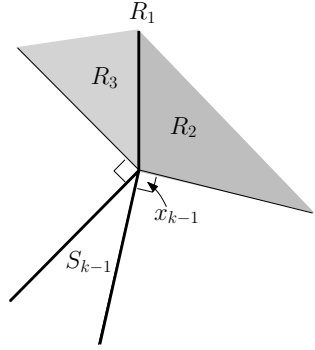}
\caption{The cone $C_{S_{k-1}}^{O}\left(  \mathbf{x}_{k-1}\right)  =R_{1}\cup
R_{2}\cup R_{3}$}%
\label{setsr}%
\end{figure}

\end{theorem}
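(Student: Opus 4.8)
The plan is to read the four conditions defining $M$ in Definition~\ref{defM} as, in order, (1) primal feasibility, (2) the output equation of the estimation system (\ref{ss2}), (3) the output equation of the regulator system (\ref{adjointstate}), and (4) a complementary-slackness condition, and then to run a one-step linear-programming duality argument on the recursion (\ref{wits_recurs}). The whole statement then amounts to: the primal-feasible vector $\mathbf{x}_k$ and the dual data $(\mathbf{x}_k^{\ast},y_k^{\ast},v_k^{\ast})$ satisfy complementary slackness, hence are optimal for the linear program $\max_{\mathbf{x}\in S_k}\langle\mathbf{x}_k^{\ast},\mathbf{x}\rangle$, so $\mathbf{x}_k^{\ast}$ supports $S_k$ at $\mathbf{x}_k$.

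First I would dispose of the parts not involving the dual vectors. Since $D_1=n_1$, conditions 1 and 2 of Definition~\ref{defM} say precisely $|v_k|\le 1$, $y_k=C\mathbf{x}_{k-1}+D_1v_k$ and $|y_k-z_k|\le 1$; as $\mathbf{x}_{k-1}\in S_{k-1}$, Definition~\ref{defprecursor} shows $\mathbf{x}_k:=A\mathbf{x}_{k-1}+Bv_k$ is a successor to $\mathbf{x}_{k-1}$, and (\ref{wits_recurs}) gives $\mathbf{x}_k\in S_k$. It then suffices to prove $\langle\mathbf{x}_k^{\ast},\mathbf{x}\rangle\le\langle\mathbf{x}_k^{\ast},\mathbf{x}_k\rangle$ for every $\mathbf{x}\in S_k$: together with $\mathbf{x}_k^{\ast}\neq\mathbf{0}$ this gives $\langle\mathbf{x}_k^{\ast},\mathbf{x}_k\rangle=h_{S_k}(\mathbf{x}_k^{\ast})$, hence $\mathbf{x}_k^{\ast}\in C_{S_k}^{O}(\mathbf{x}_k)$ by Definition~\ref{defCone}, and also $\mathbf{x}_k\in\partial S_k$ (a point maximizing a nonzero linear functional over a convex set cannot be interior).

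The crux is an algebraic link between the realizations (\ref{ABCD}) and (\ref{ABCDstardef}). Working directly from those matrices (using $d_1=1$) one checks $A^{\ast}=A^{-T}$ and $A^{T}B^{\ast}=C^{T}$, so that $\mathbf{x}_k^{\ast}=A^{\ast}\mathbf{x}_{k-1}^{\ast}+B^{\ast}y_k^{\ast}$ yields $A^{T}\mathbf{x}_k^{\ast}=\mathbf{x}_{k-1}^{\ast}+C^{T}y_k^{\ast}$; one likewise checks $\langle\mathbf{x}_k^{\ast},B\rangle=v_k^{\ast}+D_1y_k^{\ast}$, where condition 3 of Definition~\ref{defM} is exactly the regulator output relation $v_k^{\ast}=C^{\ast}\mathbf{x}_{k-1}^{\ast}+D_1^{\ast}y_k^{\ast}$. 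Now take any $\mathbf{x}\in S_k$ and write it via (\ref{wits_recurs}) as $\mathbf{x}=A\mathbf{x}'+Bv$ with $\mathbf{x}'\in S_{k-1}$, $|v|\le 1$, $y:=C\mathbf{x}'+D_1v$, $|y-z_k|\le 1$. Substituting the two identities into $\langle\mathbf{x}_k^{\ast},\mathbf{x}\rangle=\langle A^{T}\mathbf{x}_k^{\ast},\mathbf{x}'\rangle+\langle\mathbf{x}_k^{\ast},B\rangle v$ and combining the $y_k^{\ast}$ terms through $y=C\mathbf{x}'+D_1v$ gives
\[
\langle\mathbf{x}_k^{\ast},\mathbf{x}\rangle=\langle\mathbf{x}_{k-1}^{\ast},\mathbf{x}'\rangle+y_k^{\ast}\,y+v_k^{\ast}\,v .
\]
Since $\mathbf{x}_{k-1}^{\ast}\in C_{S_{k-1}}^{O}(\mathbf{x}_{k-1})$ we have $\langle\mathbf{x}_{k-1}^{\ast},\mathbf{x}'\rangle\le\langle\mathbf{x}_{k-1}^{\ast},\mathbf{x}_{k-1}\rangle$; also $y_k^{\ast}y=y_k^{\ast}(y-z_k)+y_k^{\ast}z_k\le|y_k^{\ast}|+y_k^{\ast}z_k$ and $v_k^{\ast}v\le|v_k^{\ast}|$, so $\langle\mathbf{x}_k^{\ast},\mathbf{x}\rangle\le\langle\mathbf{x}_{k-1}^{\ast},\mathbf{x}_{k-1}\rangle+|y_k^{\ast}|+y_k^{\ast}z_k+|v_k^{\ast}|$. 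Evaluating the displayed identity at $\mathbf{x}=\mathbf{x}_k$ (i.e. $\mathbf{x}'=\mathbf{x}_{k-1}$, $y=y_k$, $v=v_k$), the alignment condition 4 forces $y_k^{\ast}(y_k-z_k)=|y_k^{\ast}|$ and $v_k^{\ast}v_k=|v_k^{\ast}|$, so all three inequalities become equalities; hence $\langle\mathbf{x}_k^{\ast},\mathbf{x}_k\rangle=\langle\mathbf{x}_{k-1}^{\ast},\mathbf{x}_{k-1}\rangle+|y_k^{\ast}|+y_k^{\ast}z_k+|v_k^{\ast}|$ and therefore $\langle\mathbf{x}_k^{\ast},\mathbf{x}\rangle\le\langle\mathbf{x}_k^{\ast},\mathbf{x}_k\rangle$ for all $\mathbf{x}\in S_k$, completing the argument.

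The step I expect to be the real obstacle is establishing the adjoint identities $A^{\ast}=A^{-T}$, $A^{T}B^{\ast}=C^{T}$ and $\langle\mathbf{x}_k^{\ast},B\rangle=v_k^{\ast}+D_1y_k^{\ast}$ cleanly from the companion-type data in (\ref{ABCD}) and (\ref{ABCDstardef}) --- this is elementary but needs careful index bookkeeping, and conceptually it is the assertion that the regulator system of Section~\ref{sssection} is the correct dual of the estimation system. Everything downstream is the routine weak-duality/complementary-slackness calculation above. It is worth noting that the third implications in Definition~\ref{def_align} ($|v|<1\Rightarrow v^{\ast}=0$, $|y-z_k|<1\Rightarrow y^{\ast}=0$) play no role in this direction; they become relevant only when one later wants to recover \emph{all} boundary successors rather than merely produce one.
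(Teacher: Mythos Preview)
Your argument is correct, and the adjoint identities $A^{\ast}=A^{-T}$, $A^{T}B^{\ast}=C^{T}$, $\langle \mathbf{x}_{k}^{\ast},B\rangle=v_{k}^{\ast}+D_{1}y_{k}^{\ast}$ do hold from (\ref{ABCD}) and (\ref{ABCDstardef}); the key identity $\langle\mathbf{x}_k^{\ast},\mathbf{x}\rangle=\langle\mathbf{x}_{k-1}^{\ast},\mathbf{x}'\rangle+y_k^{\ast}y+v_k^{\ast}v$ and the subsequent weak-duality/alignment bound then go through exactly as you wrote them.

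Your route is genuinely different from the paper's. The paper does not isolate those one-step adjoint identities; instead it first proves Proposition~\ref{mainthm} by appending $(v_k,y_k)$ and $(v_k^{\ast},y_k^{\ast})$ to \emph{full histories} $(y_{1:k-1},v_{1:k-1})\in\arg\max\mathcal{E}'_{z_{1:k-1}}(\mathbf{x}_{k-1}^{\ast})$ and $(y_{1:k-1}^{\ast},v_{1:k-1}^{\ast})\in\arg\min\mathcal{R}_{z_{1:k-1}}(\mathbf{x}_{k-1}^{\ast})$, checks feasibility and alignment of the extended signals for $\mathcal{E}'_{z_{1:k}}(\mathbf{x}_k^{\ast})$ and $\mathcal{R}_{z_{1:k}}(\mathbf{x}_k^{\ast})$, and then invokes the global LP duality Proposition~\ref{primdaulfixed} (proved in the Appendix via Gohberg--Semencul) to conclude optimality; Theorem~\ref{thmsuccessor} is then read off via Propositions~\ref{basicprop1} and~\ref{propconeargmax}. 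Your approach is more self-contained for this single theorem---it needs only the one-step recursion (\ref{wits_recurs}) and three companion-matrix identities, and never touches the full-history programs or the Appendix duality. The paper's approach, by contrast, reuses its $\mathcal{E}'/\mathcal{R}$ machinery for the later dynamic-programming results (Proposition~\ref{prop_invimage}, Theorem~\ref{bigcor}), so its investment in Proposition~\ref{primdaulfixed} is amortised over several proofs. Your closing remark that the third implications in Definition~\ref{def_align} are unused here is also accurate and consistent with the paper's argument.
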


Theorem \ref{thmsuccessor} can be used to find states on the boundary $S_{k}$,
but gives no guarantee of finding all states on the boundary of $S_{k}$. In
order to state results directed towards this goal, we need some more
definitions. The cone $C_{S_{k-1}}^{O}\left(  \mathbf{x}_{k-1}\right)  $
associated with any given $\mathbf{x}_{k-1}\in\partial S_{k-1}$ can be
partitioned into three disjoint cones:%
\begin{align*}
R_{1}  &  =R_{1}\left(  \mathbf{x}_{k-1}\right)  :=C_{S_{k-1}}^{O}\left(
\mathbf{x}_{k-1}\right)  \cap\left\{  \mathbf{x}_{k-1}^{\ast}:\left(
\mathbf{x}_{k-1}^{\ast}\right)  _{1}=0\right\} \\
R_{2}  &  =R_{2}\left(  \mathbf{x}_{k-1}\right)  :=C_{S_{k-1}}^{O}\left(
\mathbf{x}_{k-1}\right)  \cap\left\{  \mathbf{x}_{k-1}^{\ast}:\left(
\mathbf{x}_{k-1}^{\ast}\right)  _{1}>0\right\} \\
R_{3}  &  =R_{3}\left(  \mathbf{x}_{k-1}\right)  :=C_{S_{k-1}}^{O}\left(
\mathbf{x}_{k-1}\right)  \cap\left\{  \mathbf{x}_{k-1}^{\ast}:\left(
\mathbf{x}_{k-1}^{\ast}\right)  _{1}<0\right\}  .
\end{align*}

At least one of the $R_{i}$ is non-empty. See Fig. \ref{setsr}. One of the
$R_{i}$ is selected according to the following rule.%
\[
R=R\left(  \mathbf{x}_{k-1}\right)  :=\left\{
\begin{array}
[c]{l}%
R_{1}\text{ if }R_{1}\neq\emptyset\\
R_{2}\text{ if }R_{1}=\emptyset\text{ and }R_{2}\neq\emptyset\\
R_{3}\text{ if }R_{1}=\emptyset\text{ and }R_{3}\neq\emptyset\text{.}%
\end{array}
\right.
\]

This Definition makes sense because, if $R_{1}$ is empty, then precisely one
of $R_{2}$ and $R_{3}$ must be non-empty.

Given $\mathbf{x}_{k-1}\in\partial S_{k-1}$, any vector $\mathbf{x}%
_{k-1}^{\ast}\in R\left(  \mathbf{x}_{k-1}\right)  $, and $z_{k}$, we define
the sets $T$ and $X$.

\begin{definition}
\label{defti}%
\begin{align*}
T\left(  \mathbf{x}_{k-1},\mathbf{x}_{k-1}^{\ast},z_{k}\right)   &  :=\left\{
\begin{array}
[c]{c}%
\left(  \mathbf{x}_{k},\mathbf{x}_{k}^{\ast}\right)  =\left(  A\mathbf{x}%
_{k-1}+Bq_{1},A^{\ast}\mathbf{x}_{k-1}^{\ast}+B^{\ast}q_{4}\right)
\text{\textbf{ }satisfying}\\
\mathbf{x}_{k}^{\ast}\neq\mathbf{0}\text{ and }\mathbf{q}\in M\left(
C\mathbf{x}_{k-1},\left(  \mathbf{x}_{k-1}^{\ast}\right)  _{1},z_{k}\right)
\text{.}%
\end{array}
\right\} \\
\text{and }X  &  =X\left(  T\right)  :=\left\{  \mathbf{x}_{k}:\left(
\mathbf{x}_{k},\mathbf{x}_{k}^{\ast}\right)  \in T\right\}  \text{.}%
\end{align*}

\end{definition}

The set $T=T\left(  \mathbf{x}_{k-1},\mathbf{x}_{k-1}^{\ast},z_{k}\right)  $
can be empty. A useful observation is that although $T$ depends on the choice
of $\mathbf{x}_{k-1}^{\ast}\in R$, $X$ does not.

\begin{proposition}
\label{propTi}For any $\mathbf{x}_{k-1}\in\partial S_{k-1}$ and any $z_{k}$,
the set $X\left(  T\right)  $ does not depend on the choice of $\mathbf{x}%
_{k-1}^{\ast}\in R\left(  \mathbf{x}_{k-1}\right)  .$
\end{proposition}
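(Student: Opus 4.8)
The plan is to show that the projection $X(T)$ onto the first coordinate of the set $T\left(\mathbf{x}_{k-1},\mathbf{x}_{k-1}^{\ast},z_{k}\right)$ is governed entirely by the first three conditions in Definition \ref{defM} together with the alignment condition's constraints on $(v,y)$, neither of which involves $\mathbf{x}_{k-1}^{\ast}$ beyond its first component $\left(\mathbf{x}_{k-1}^{\ast}\right)_{1}$. More precisely, $\mathbf{x}_{k}=A\mathbf{x}_{k-1}+Bq_{1}$ depends on $\mathbf{q}\in M\left(C\mathbf{x}_{k-1},\left(\mathbf{x}_{k-1}^{\ast}\right)_{1},z_{k}\right)$ only through $q_{1}=v_{k}$, so the content of the Proposition is that the set of admissible values of $v_{k}$ is the same for every choice of $\mathbf{x}_{k-1}^{\ast}\in R\left(\mathbf{x}_{k-1}\right)$. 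A value $v_{k}$ is admissible precisely when there exist $y,v^{\ast},y^{\ast}$ completing it to a quadruple $\mathbf{q}\in M$ with the additional requirement $A^{\ast}\mathbf{x}_{k-1}^{\ast}+B^{\ast}y^{\ast}\neq\mathbf{0}$.

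The first step is to observe that by the definition of $R=R\left(\mathbf{x}_{k-1}\right)$, the scalar $s:=C\mathbf{x}_{k-1}$ is fixed, and the scalar $t:=\left(\mathbf{x}_{k-1}^{\ast}\right)_{1}$ has a fixed \emph{sign} (zero on $R_{1}$, strictly positive on $R_{2}$, strictly negative on $R_{3}$) across all $\mathbf{x}_{k-1}^{\ast}\in R$; only its magnitude varies. I would then argue that conditions 1 and 2 of Definition \ref{defM} pin down the admissible $(v,y)=(v_{k},y_{k})$ pairs independently of $\mathbf{x}_{k-1}^{\ast}$ altogether, while conditions 3, 4 merely require that, for the chosen $(v_{k},y_{k})$, there exist $(v^{\ast},y^{\ast})$ with $d_{m+1}v^{\ast}+n_{m+1}y^{\ast}=-t$ aligned with $(y_{k},v_{k})$ at time $k$. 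Here the alignment relations in Definition \ref{def_align} are one-directional implications that constrain $y^{\ast},v^{\ast}$ in terms of $y_{k},v_{k}$ (forcing $v^{\ast}=0$ when $|v_{k}|<1$, and $y^{\ast}=0$ when $|y_{k}-z_{k}|<1$) and constrain $y_{k},v_{k}$ only through the sign of $y^{\ast},v^{\ast}$. The key point is that whether such $(v^{\ast},y^{\ast})$ exist depends only on the sign of $t$, not its magnitude, since scaling $(v^{\ast},y^{\ast})$ by a positive factor preserves both alignment (Definition \ref{def_align} is sign-based) and the linear equation $d_{m+1}v^{\ast}+n_{m+1}y^{\ast}=-t$ up to rescaling $t$.

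The remaining step, and the one I expect to be the main obstacle, is the non-triviality constraint $\mathbf{x}_{k}^{\ast}=A^{\ast}\mathbf{x}_{k-1}^{\ast}+B^{\ast}y_{k}^{\ast}\neq\mathbf{0}$: this \emph{does} depend on the full vector $\mathbf{x}_{k-1}^{\ast}$, not just its first component, so I must show it can never exclude an otherwise-admissible $v_{k}$. I would handle this by the following observation: given an admissible $v_{k}$ with a witness $(y_{k},v^{\ast},y^{\ast})$ for one choice of $\mathbf{x}_{k-1}^{\ast}\in R$, if it happened that $A^{\ast}\mathbf{x}_{k-1}^{\ast}+B^{\ast}y_{k}^{\ast}=\mathbf{0}$, one can perturb $(v^{\ast},y^{\ast})$ within the alignment constraints — exploiting that when the relevant strict inequality $|v_{k}|<1$ or $|y_{k}-z_{k}|<1$ holds the corresponding starred variable is forced to $0$, but otherwise it is free in sign-agreement with $v_{k}$ resp. $y_{k}-z_{k}$, hence free to rescale — to make $\mathbf{x}_{k}^{\ast}\neq\mathbf{0}$; and rescaling $\mathbf{x}_{k-1}^{\ast}$ itself (staying in the cone $R$) together with $(v^{\ast},y^{\ast})$ does not change $\mathbf{x}_{k}$. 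One must check that at least one degree of freedom always survives; here the hypothesis $\operatorname*{int}S_{k-1}\neq\emptyset$, which forces $C_{S_{k-1}}^{O}\left(\mathbf{x}_{k-1}\right)$ to be a genuine (pointed, lower-dimensional-free) cone rather than a line, should supply it. Assembling these, every admissible $v_{k}$ for one $\mathbf{x}_{k-1}^{\ast}\in R$ is admissible for every other, so $X(T)=\{A\mathbf{x}_{k-1}+Bv_{k}:v_{k}\text{ admissible}\}$ is independent of the choice, as claimed.
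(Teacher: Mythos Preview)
Your core argument coincides with the paper's: the admissible $(v_k,y_k)$ pairs depend on $\mathbf{x}_{k-1}^{\ast}$ only through the \emph{sign} of $t=\left(\mathbf{x}_{k-1}^{\ast}\right)_{1}$, and that sign is constant on $R(\mathbf{x}_{k-1})$ by construction. The paper case-splits on $R=R_i$ and makes exactly this ``only the signs of $v^{\ast},y^{\ast}$ constrain $v,y$'' observation, using that scaling $(v^{\ast},y^{\ast})$ by a positive factor preserves both alignment and condition~3 up to rescaling $t$.

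Where you go beyond the paper is in flagging the requirement $\mathbf{x}_{k}^{\ast}=A^{\ast}\mathbf{x}_{k-1}^{\ast}+B^{\ast}y^{\ast}\neq\mathbf{0}$, which genuinely depends on the full vector $\mathbf{x}_{k-1}^{\ast}$ and which the paper's own proof does not explicitly address. Your proposed resolution of this point, however, has two defects. First, ``rescaling $\mathbf{x}_{k-1}^{\ast}$ itself (staying in the cone $R$)'' is circular: that changes the very choice whose irrelevance you are trying to establish, so it cannot be part of the argument. Second, the perturbation of $(v^{\ast},y^{\ast})$ is not always available: if $|v_k|<1$ then alignment forces $v^{\ast}=0$, and condition~3 then pins $y^{\ast}=-t/n_{m+1}$ uniquely (when $n_{m+1}\neq0$), leaving nothing to perturb. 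Your appeal to $\operatorname{int}S_{k-1}\neq\emptyset$ and pointedness of $C_{S_{k-1}}^{O}(\mathbf{x}_{k-1})$ does not supply the missing degree of freedom in this situation. What actually closes the gap is a direct check using the explicit $A^{\ast},B^{\ast}$ from (\ref{ABCDstardef}): whenever $y^{\ast}=0$ (in particular whenever $|y_k-z_k|<1$), invertibility of $A^{\ast}$ (its determinant is $(-1)^{m}/d_{m+1}\neq0$) gives $\mathbf{x}_{k}^{\ast}=A^{\ast}\mathbf{x}_{k-1}^{\ast}\neq\mathbf{0}$ immediately; the remaining pinned case $|v_k|<1$, $|y_k-z_k|=1$ requires a short separate computation rather than a perturbation.
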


\begin{proof}
Precisely one of $R=R_{1},$ $R=R_{2}$ or $R=R_{3}$ must hold. We show details
for the case $R=R_{2}.$ Select any $\mathbf{\bar{x}}_{k-1}^{\ast}\in R_{2}.$
The key observation is that, for any $\mathbf{q}\in M\left(  C\mathbf{x}%
_{k-1},\left(  \mathbf{\bar{x}}_{k-1}^{\ast}\right)  _{1},z_{k}\right)  $, it
is only the \textit{signs} of $q_{3}(=v^{\ast})$ and $q_{4}(=y^{\ast})$ that
restrict $q_{1}(=v)$ and $q_{2}(=y)$; that is, for the nine constraint
conditions in (\ref{align_defv}), (\ref{align_defy}) and Definition
\ref{defM}, the magnitudes of $v^{\ast}$ and $y^{\ast}$ do not constrain $v$
or $y$.\ But the possible signs of $v^{\ast}$ and $y^{\ast}$ satisfying
$d_{m+1}v^{\ast}+n_{m+1}y^{\ast}=-\left(  \mathbf{x}_{k-1}^{\ast}\right)
_{1}$ are the same for all $\mathbf{x}_{k-1}^{\ast}\in R_{2}$, because
$\left(  \mathbf{x}_{k-1}^{\ast}\right)  _{1}>0$ for all $\mathbf{x}%
_{k-1}^{\ast}\in R_{2}$. Thus $\left(  A\mathbf{x}_{k-1}+Bq_{1},A^{\ast
}\mathbf{\bar{x}}_{k-1}^{\ast}+B^{\ast}q_{4}\right)  \in T\left(
\mathbf{x}_{k-1},\mathbf{\bar{x}}_{k-1}^{\ast},z_{k}\right)  $ implies
$\left(  A\mathbf{x}_{k-1}+Bq_{1},A^{\ast}\mathbf{x}_{k-1}^{\ast}+B^{\ast
}q_{4}\right)  \in T\left(  \mathbf{x}_{k-1},\mathbf{x}_{k-1}^{\ast}%
,z_{k}\right)  $ for all $\mathbf{x}_{k-1}^{\ast}\in R_{2}$. The same argument
applies for the case $R=R_{3}$, and the case $R=R_{1}$ is similar.
\end{proof}

In light of this result, we write $X=X\left(  \mathbf{x}_{k-1},z_{k}\right)  $.

The main results of the paper are now presented. The ultimate aim is to
construct $S_{k}$, and this is achieved when the vertices of $S_{k}$ are
known. The following two Theorems provide the basis of a recursive procedure
for determining $\partial S_{k}$ from $\partial S_{k-1}$. Theorem
\ref{mainthm2} follows from Theorem \ref{thmsuccessor} and is proved in
Section \ref{combrecSect}. Theorem \ref{thmbig}, which is proved in Section
\ref{Sectend}, guarantees that vertices of $S_{k}$ have at least one precursor
on the boundary of $S_{k-1}$, and shows how any such precursor $\mathbf{x}%
_{k-1}$, and any $\mathbf{x}_{k-1}^{\ast}\in R\left(  \mathbf{x}_{k-1}\right)
$, are propagated.

\begin{theorem}
\label{mainthm2}Suppose $\mathbf{x}_{k-1}\in\partial S_{k-1}$. If
$\mathbf{x}_{k}\in X\left(  \mathbf{x}_{k-1},z_{k}\right)  $ then
$\mathbf{x}_{k}$ is a successor to $\mathbf{x}_{k-1}$ and $\mathbf{x}_{k}%
\in\partial S_{k}$.
\end{theorem}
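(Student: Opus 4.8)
The plan is to reduce Theorem~\ref{mainthm2} to Theorem~\ref{thmsuccessor} by exhibiting, for each $\mathbf{x}_k\in X(\mathbf{x}_{k-1},z_k)$, a companion vector $\mathbf{x}_k^\ast\neq\mathbf{0}$ so that the hypotheses of Theorem~\ref{thmsuccessor} are met. First I would fix $\mathbf{x}_{k-1}\in\partial S_{k-1}$ and recall from Section~\ref{sectnotprelim} (the remark following Definition~\ref{defCone}) that $C_{S_{k-1}}^{O}(\mathbf{x}_{k-1})\neq\emptyset$, so at least one of $R_1,R_2,R_3$ is non-empty and the selection rule picks a well-defined cone $R=R(\mathbf{x}_{k-1})$. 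Choose any $\mathbf{x}_{k-1}^\ast\in R\subseteq C_{S_{k-1}}^{O}(\mathbf{x}_{k-1})$. By the definition of $X$ via $X(T)$ and Proposition~\ref{propTi}, any $\mathbf{x}_k\in X(\mathbf{x}_{k-1},z_k)$ arises as $\mathbf{x}_k=A\mathbf{x}_{k-1}+Bq_1$ for some $\mathbf{q}=(q_1,q_2,q_3,q_4)\in M\!\left(C\mathbf{x}_{k-1},(\mathbf{x}_{k-1}^\ast)_1,z_k\right)$ with the associated $\mathbf{x}_k^\ast:=A^\ast\mathbf{x}_{k-1}^\ast+B^\ast q_4\neq\mathbf{0}$, precisely the data $(\mathbf{x}_k,\mathbf{x}_k^\ast)\in T(\mathbf{x}_{k-1},\mathbf{x}_{k-1}^\ast,z_k)$.

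With this in hand, set $v_k:=q_1$, $y_k:=q_2$, $v_k^\ast:=q_3$, $y_k^\ast:=q_4$. Then $(v_k,y_k,v_k^\ast,y_k^\ast)\in M\!\left(C\mathbf{x}_{k-1},(\mathbf{x}_{k-1}^\ast)_1,z_k\right)$ by construction, and $\mathbf{x}_k^\ast=A^\ast\mathbf{x}_{k-1}^\ast+B^\ast y_k^\ast\neq\mathbf{0}$. These are exactly the hypotheses of Theorem~\ref{thmsuccessor} (noting $\mathbf{x}_{k-1}^\ast\in R\subseteq C_{S_{k-1}}^{O}(\mathbf{x}_{k-1})$). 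Theorem~\ref{thmsuccessor} then delivers $\mathbf{x}_k=A\mathbf{x}_{k-1}+Bv_k\in\partial S_k$, that $\mathbf{x}_k$ is a successor to $\mathbf{x}_{k-1}$, and incidentally that $\mathbf{x}_k^\ast\in C_{S_k}^{O}(\mathbf{x}_k)$. This yields both conclusions of the Theorem directly.

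The one point requiring care — and the main (though modest) obstacle — is the passage from $X(T)$ as literally written in Definition~\ref{defti}, which is tied to a particular choice of $\mathbf{x}_{k-1}^\ast\in R$, to the notation $X(\mathbf{x}_{k-1},z_k)$ used in the statement of the Theorem. This is exactly what Proposition~\ref{propTi} provides: the set $X(T)$ does not depend on which $\mathbf{x}_{k-1}^\ast\in R$ is chosen, so $X(\mathbf{x}_{k-1},z_k)$ is unambiguous and the membership $\mathbf{x}_k\in X(\mathbf{x}_{k-1},z_k)$ guarantees the existence of \emph{some} admissible $\mathbf{x}_{k-1}^\ast\in R$ and some $\mathbf{q}\in M$ realizing $\mathbf{x}_k$ with non-zero dual. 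Beyond invoking Proposition~\ref{propTi} at this step, the argument is essentially a direct unpacking of the definitions followed by a single application of Theorem~\ref{thmsuccessor}; no further estimates or computations are needed.
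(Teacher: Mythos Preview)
Your proposal is correct and follows essentially the same approach as the paper: unpack the definition of $X(\mathbf{x}_{k-1},z_k)$ to extract $\mathbf{x}_{k-1}^\ast\in R\subseteq C_{S_{k-1}}^{O}(\mathbf{x}_{k-1})$ and $\mathbf{q}\in M$ with $A^\ast\mathbf{x}_{k-1}^\ast+B^\ast q_4\neq\mathbf{0}$, then invoke Theorem~\ref{thmsuccessor}. The paper's proof is a touch more terse but logically identical; your explicit mention of Proposition~\ref{propTi} to justify the well-definedness of $X(\mathbf{x}_{k-1},z_k)$ is a reasonable clarification rather than a different idea.
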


\begin{theorem}
\label{thmbig}Let $\mathbf{x}_{k}$ be a vertex of $S_{k}$. Then there exists
$\mathbf{x}_{k-1}\in\partial S_{k-1}$ such that $\mathbf{x}_{k}\in X\left(
\mathbf{x}_{k-1},z_{k}\right)  $. Furthermore, for all $\mathbf{x}_{k-1}%
^{\ast}\in R\left(  \mathbf{x}_{k-1}\right)  $, there holds $\left(
\mathbf{x}_{k},\mathbf{x}_{k}^{\ast}\right)  \in T\left(  \mathbf{x}%
_{k-1},\mathbf{x}_{k-1}^{\ast},z_{k}\right)  $, where $\mathbf{x}_{k}^{\ast
}\in C_{S_{k}}^{O}\left(  \mathbf{x}_{k}\right)  $.
\end{theorem}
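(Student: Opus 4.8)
The plan is to set up, for a given vertex $\mathbf{x}_k$ of $S_k$, a linear program whose feasible set is $S_k$ (using the description \eqref{Sk1}, or equivalently \eqref{conv_lengthk} together with the $\ell_\infty$ bounds on $v_{1:k}$ and $y_{1:k}-z_{1:k}$), and whose optimal solution is forced to be $\mathbf{x}_k$. Concretely, pick any $\mathbf{x}_k^\ast\in C_{S_k}^O(\mathbf{x}_k)$ — nonempty since $S_k$ is a polytope — and maximize $\langle\mathbf{x}_k^\ast,\mathbf{x}\rangle$ over $S_k$. The decision variables are $(y_{1:k},v_{1:k})$, the constraints are \eqref{conv_lengthk} and $\|v_{1:k}\|_\infty\le 1$, $\|y_{1:k}-z_{1:k}\|_\infty\le 1$, and $\mathbf{x}=\mathbf{x}_k(y_{1:k},v_{1:k})$ via \eqref{statedef}. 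An optimal $(y_{1:k}^\ast,v_{1:k}^\ast)$ then yields a worst-case disturbance history for $\mathbf{x}_k$. The precursor is read off as $\mathbf{x}_{k-1}:=\mathbf{x}_{k-1}(y_{1:k},v_{1:k})$ using the first branch of \eqref{statedef}; it lies in $S_{k-1}$ by Proposition \ref{prop2}, and I will argue it lies on $\partial S_{k-1}$ (see below).

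The second half of the statement — producing $\mathbf{x}_{k-1}^\ast$ and $\mathbf{x}_k^\ast$ with $(\mathbf{x}_k,\mathbf{x}_k^\ast)\in T(\mathbf{x}_{k-1},\mathbf{x}_{k-1}^\ast,z_k)$ — is where the dual enters. I would write down the LP dual of the program above; by the structure of the Gohberg–Semencul / Bezoutian relations the dual variables organize into sequences $(y_{1:k}^\ast,v_{1:k}^\ast)$ obeying the regulator convolution $\mathbf{\tilde n}\ast\mathbf{y}^\ast+\mathbf{\tilde d}\ast\mathbf{v}^\ast=(\,\cdot\,,0,\ldots)$ from Section \ref{sssection}, and complementary slackness is exactly the alignment condition of Definition \ref{def_align} between $(y_{1:k},v_{1:k})$ and $(y_{1:k}^\ast,v_{1:k}^\ast)$. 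The regulator state $\mathbf{x}_{k-1}^\ast:=\mathbf{x}_{k-1}^\ast(y^\ast,v^\ast)$ and $\mathbf{x}_k^\ast:=\mathbf{x}_k^\ast(y^\ast,v^\ast)$ from \eqref{state_def_star} then satisfy $\mathbf{x}_k^\ast=A^\ast\mathbf{x}_{k-1}^\ast+B^\ast y_k^\ast$ and $\mathbf{x}_k^\ast\in C_{S_k}^O(\mathbf{x}_k)$, and a short computation (using $C=$ first row of $B_T$, and the definitions of $A,B,A^\ast,B^\ast,C^\ast$) shows that constraints 2 and 3 of Definition \ref{defM} hold with $s=C\mathbf{x}_{k-1}$, $t=(\mathbf{x}_{k-1}^\ast)_1$. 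This places $\mathbf{q}=(v_k,y_k,v_k^\ast,y_k^\ast)$ in $M(C\mathbf{x}_{k-1},(\mathbf{x}_{k-1}^\ast)_1,z_k)$, and since $\mathbf{x}_k^\ast\neq\mathbf0$ we get $(\mathbf{x}_k,\mathbf{x}_k^\ast)\in T$, hence $\mathbf{x}_k\in X(\mathbf{x}_{k-1},z_k)$. The upgrade from "some $\mathbf{x}_{k-1}^\ast$" to "all $\mathbf{x}_{k-1}^\ast\in R(\mathbf{x}_{k-1})$" is then handed to us by Proposition \ref{propTi}, which says $X$ is independent of the representative chosen in $R$; combined with a symmetric sign-pattern argument on $(v^\ast,y^\ast)$ this also gives the full $T$-membership claim for every $\mathbf{x}_{k-1}^\ast\in R$.

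Two gaps need care. First, why is $\mathbf{x}_{k-1}\in\partial S_{k-1}$ and not in $\operatorname{int}S_{k-1}$? Here I would use that $\mathbf{x}_k$ is a \emph{vertex}: the dual vector $\mathbf{x}_k^\ast$ can be perturbed within a full-dimensional cone $C_{S_k}^O(\mathbf{x}_k)$, and if the chosen $\mathbf{x}_{k-1}$ were interior then the regulator dynamics would force $\mathbf{x}_{k-1}^\ast=\mathbf0$, contradicting $\mathbf{x}_k^\ast=A^\ast\mathbf{x}_{k-1}^\ast+B^\ast y_k^\ast\neq\mathbf0$ unless $y_k^\ast\neq0$ — and the case analysis on the sign of $y_k^\ast$ (via the alignment conditions and the observation equation $y_k=C\mathbf{x}_{k-1}+D_1v_k$) then pins $\mathbf{x}_{k-1}$ to a supporting hyperplane of $S_{k-1}$. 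This is the main obstacle: I expect the bookkeeping here to be the delicate part, because one must track how the recursive description \eqref{wits_recurs} of $S_k$ interacts with faces of $S_{k-1}$, and handle the degenerate sub-cases where $|v_k|<1$ or $|y_k-z_k|<1$ (the slack cases forcing components of $\mathbf{x}_{k-1}^\ast$ or $y_k^\ast$ to vanish). Second, strong LP duality must be invoked to guarantee the dual optimum exists and the duality gap is zero — this is routine once feasibility and boundedness of $S_k$ are noted. With those two points settled, assembling the pieces is mechanical.
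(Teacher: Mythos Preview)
Your overall architecture is exactly the paper's: set up the estimator program $\mathcal{E}'_{z_{1:k}}(\mathbf{x}_k^\ast)$, pass to the dual $\mathcal{R}_{z_{1:k}}(\mathbf{x}_k^\ast)$, read off the regulator state, and recognise complementary slackness as the alignment condition of Definition~\ref{def_align}. The $M$-membership verification and the reduction from $X^O$ to $X$ via Propositions~\ref{propTi}/\ref{propX0X} are also handled the same way in the paper (Proposition~\ref{prop9}, Theorem~\ref{thmbig1}).

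The genuine gap is in your treatment of ``why is $\mathbf{x}_{k-1}\in\partial S_{k-1}$''. You correctly observe that $\mathbf{x}_{k-1}\in\operatorname{int}S_{k-1}$ forces $\mathbf{x}_{k-1}^\ast=\mathbf 0$ (this is Theorem~\ref{bigcor}(i), which needs the dynamic-programming Proposition~\ref{prop_invimage}, not just ``regulator dynamics''), and hence $\mathbf{x}_k^\ast=B^\ast y_k^\ast$. But your proposed continuation---``the case analysis on the sign of $y_k^\ast$ \ldots\ then pins $\mathbf{x}_{k-1}$ to a supporting hyperplane of $S_{k-1}$''---does not work. The alignment condition \eqref{align_defy} with $y_k^\ast\neq 0$ only fixes $y_k=\pm 1+z_k$; through $y_k=C\mathbf{x}_{k-1}+D_1v_k$ this is a single linear relation between $\mathbf{x}_{k-1}$ and $v_k$, and it places no boundary constraint whatsoever on $\mathbf{x}_{k-1}$ within $S_{k-1}$. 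An interior $\mathbf{x}_{k-1}$ with a suitably adjusted $v_k$ is perfectly consistent with it.

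The fix is the one you gesture at but do not exploit: since $\mathbf{x}_k$ is a vertex, $C_{S_k}^O(\mathbf{x}_k)$ is full-dimensional, so you may \emph{choose} $\mathbf{x}_k^\ast\in C_{S_k}^O(\mathbf{x}_k)$ with $\mathbf{x}_k^\ast\neq\alpha B^\ast$ for every scalar $\alpha$. For that choice the equation $\mathbf{x}_k^\ast=A^\ast\mathbf{x}_{k-1}^\ast+B^\ast y_k^\ast$ forces $\mathbf{x}_{k-1}^\ast\neq\mathbf 0$, and Theorem~\ref{bigcor}(ii) then gives $\mathbf{x}_{k-1}^\ast\in C_{S_{k-1}}^O(\mathbf{x}_{k-1})$, hence $\mathbf{x}_{k-1}\in\partial S_{k-1}$. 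This is precisely how the paper proceeds (via Proposition~\ref{proprelint} and Theorem~\ref{thm_int}(ii), which package the observation that vertices never lie in $\operatorname{relint}F_k^\pm$). Once you make this one change---pick $\mathbf{x}_k^\ast$ off the line $\mathbb{R}B^\ast$ from the outset rather than trying to rescue an arbitrary choice---the rest of your plan goes through.
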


See Fig. \ref{backevmod} for a graphical illustration of finding $M\left(
C\mathbf{x}_{k-1},\left(  \mathbf{x}_{k-1}^{\ast}\right)  _{1},z_{k}\right)  $
and $T$. It depicts an Example where $\left(  \mathbf{x}_{k-1}^{\ast}\right)
_{1}<0$ for all $\mathbf{x}_{k-1}^{\ast}\in C_{S_{k-1}}^{O}\left(
\mathbf{x}_{k-1}\right)  ,$ so $R_{1}$ and $R_{2}$ are empty, and $R=R_{3}.$
For this Example $T$ contains the singleton element $\left(  A\mathbf{x}%
_{k-1}+B,A^{\ast}\mathbf{x}_{k-1}^{\ast}\right)  $ and $X=\left\{
A\mathbf{x}_{k-1}+B\right\}  .$ By Theorem \ref{mainthm2}, if $\mathbf{x}%
_{k-1}\in\partial S_{k-1}$ then $\mathbf{x}_{k}=A\mathbf{x}_{k-1}+B\in\partial
S_{k}$.

Determining the set $M$ does not become any more computationally demanding as
$m$ increases. For any $m$ it involves simply finding intersections of
straight lines in the plane and checking alignment.

The sets $M$ and $T$ are fundamental. Their description is aided by some
notation for points and lines in the plane.

\begin{notation}
\label{morenotation} Associated with any state $\mathbf{x}_{k-1}\in S_{k-1}$ there is
the line $y-n_{1}v=C\mathbf{x}_{k-1}$ in the $(y,v)$ plane, denoted $L(\mathbf{x}_{k-1}).$ Denote by $Q$
the set of points on or inside the square with vertices
$(1+z_{k},1),(1+z_{k},-1),(-1+z_{k},-1)$ and $(-1+z_{k},1)$.
\end{notation}

Not every $\mathbf{x}_{k-1}\in S_{k-1}$ has a successor. Although determining
successors $\mathbf{x}_{k}$ on the boundary of $S_{k}$ is the ultimate goal,
it is useful to first dispose of the simpler question of determining when
$\mathbf{x}_{k-1}\in S_{k-1}$ has a successor anywhere in $S_{k}$.

\begin{proposition}
\label{successorprop}The state $\mathbf{x}_{k-1}\in S_{k-1}$ has a successor
$\mathbf{x}_{k}\in S_{k}$ if and only if%
\[
\left\vert C\mathbf{x}_{k-1}-z_{k}\right\vert \leq\left\vert n_{1}\right\vert
+1.
\]
Furthermore, the set of all successors to $\mathbf{x}_{k-1}$ is%
\[
\left\{  \mathbf{x}_{k}:\mathbf{x}_{k}=A\mathbf{x}_{k-1}+Bv,\text{ }(y,v)\in
Q\cap L\left(  \mathbf{x}_{k-1}\right)  \right\}  .
\]

\end{proposition}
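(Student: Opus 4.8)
The plan is to work directly from the Witsenhausen recursion (\ref{wits_recurs}) together with the scalar output relation $y_k = C\mathbf{x}_{k-1} + D_1 v_k$ with $D_1 = n_1$. Fix $\mathbf{x}_{k-1}\in S_{k-1}$. By (\ref{wits_recurs}) (equivalently Definition \ref{defprecursor}), a successor $\mathbf{x}_k\in S_k$ exists if and only if there is a scalar $v$ with $|v|\le 1$ and $|C\mathbf{x}_{k-1}+n_1 v - z_k|\le 1$, in which case $\mathbf{x}_k = A\mathbf{x}_{k-1}+Bv$. Setting $y := C\mathbf{x}_{k-1} + n_1 v$, the pair $(y,v)$ lies on the line $L(\mathbf{x}_{k-1})$ of Notation \ref{morenotation} by construction, and the two inequalities $|v|\le 1$, $|y-z_k|\le 1$ say precisely that $(y,v)\in Q$. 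Hence the set of successors is exactly $\{A\mathbf{x}_{k-1}+Bv : (y,v)\in Q\cap L(\mathbf{x}_{k-1})\}$, which is the second assertion; note $\mathbf{x}_k$ depends only on $v$, so writing the set in terms of $(y,v)\in Q\cap L(\mathbf{x}_{k-1})$ is unambiguous.

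For the first assertion I would show $Q\cap L(\mathbf{x}_{k-1})\neq\emptyset$ iff $|C\mathbf{x}_{k-1}-z_k|\le |n_1|+1$. This is an elementary planar computation: the line $L(\mathbf{x}_{k-1})$ is $\{(C\mathbf{x}_{k-1}+n_1 v,\ v): v\in\mathbb{R}\}$, and intersecting it with the strip $|v|\le 1$ restricts $v$ to $[-1,1]$; on that segment $y-z_k = (C\mathbf{x}_{k-1}-z_k) + n_1 v$ ranges over the interval with endpoints $(C\mathbf{x}_{k-1}-z_k)\pm n_1$, i.e. over an interval of half-length $|n_1|$ centred at $C\mathbf{x}_{k-1}-z_k$. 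This interval meets $[-1,1]$ (the constraint $|y-z_k|\le 1$) iff its centre is within $|n_1|+1$ of the origin, i.e. iff $|C\mathbf{x}_{k-1}-z_k|\le |n_1|+1$. Recombining, $Q\cap L(\mathbf{x}_{k-1})\neq\emptyset$ under exactly this condition.

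I do not anticipate a genuine obstacle here; the statement is essentially a restatement of Definition \ref{defprecursor} in the planar $(y,v)$ language of Notation \ref{morenotation}. The one point to handle carefully is the degenerate case $n_1 = 0$: then $L(\mathbf{x}_{k-1})$ is the horizontal line $y = C\mathbf{x}_{k-1}$, the range of $y-z_k$ collapses to the single value $C\mathbf{x}_{k-1}-z_k$, and the condition becomes $|C\mathbf{x}_{k-1}-z_k|\le 1 = |n_1|+1$, which is consistent; the formula for the successor set is unaffected since $v$ still ranges over those values with $(y,v)\in Q$. I would also remark that, because $B=(0,\ldots,0,1)^T$, distinct admissible $v$ give distinct successors, so the successor set is in fact an affine image of the segment $Q\cap L(\mathbf{x}_{k-1})$.
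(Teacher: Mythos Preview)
Your proposal is correct and follows essentially the same route as the paper: both start from Definition~\ref{defprecursor}, translate the existence of an admissible $v$ into the planar condition $(y,v)\in Q\cap L(\mathbf{x}_{k-1})$, and then read off the inequality $|C\mathbf{x}_{k-1}-z_k|\le |n_1|+1$ from elementary geometry. The paper simply invokes ``elementary geometry of the plane'' at the last step, whereas you spell out the interval argument and the degenerate case $n_1=0$; this extra detail is fine and changes nothing substantive.
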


\begin{proof}
By Definition \ref{defprecursor}, $\mathbf{x}_{k-1}$ has a successor if and
only if scalars $v$ and $y$ exist for which $\left\vert v\right\vert \leq1,$
$\left\vert y-z_{k}\right\vert \leq1$ and $y=C\mathbf{x}_{k-1}+n_{1}v,$ in
which case the successor is $\mathbf{x}_{k}=A\mathbf{x}_{k-1}+Bv.$ By
elementary geometry of the plane such scalars $v$ and $y$ exist if and only if
the line $L\left(  \mathbf{x}_{k-1}\right)  $ intersects $Q,$
and the Proposition statements follow easily.
\end{proof}

The proof of the next Proposition is similar.

\begin{proposition}
For any $\mathbf{x}_{k-1}\in\partial S_{k-1}$, and any $\mathbf{x}_{k-1}%
^{\ast}\in C_{S_{k-1}}^{O}\left(  \mathbf{x}_{k-1}\right)  $, the sets
\newline$M\left(  C\mathbf{x}_{k-1},\left(  \mathbf{x}_{k-1}^{\ast}\right)
_{1},z_{k}\right)  $, $T$ and $X$ are empty if $\left\vert C\mathbf{x}%
_{k-1}-z_{k}\right\vert >\left\vert n_{1}\right\vert +1.$
\end{proposition}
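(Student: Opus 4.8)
The plan is to reduce this statement to the previous Proposition (\ref{successorprop}) by observing that a nonempty $M$ forces $\mathbf{x}_{k-1}$ to have a successor. First I would take $\mathbf{q}=(v,y,v^{\ast},y^{\ast})\in M\left(C\mathbf{x}_{k-1},\left(\mathbf{x}_{k-1}^{\ast}\right)_{1},z_{k}\right)$ and discard the starred components; conditions 1 and 2 in Definition \ref{defM} say precisely that $|v|\leq 1$, $|y-z_{k}|\leq 1$ and $y-n_{1}v=C\mathbf{x}_{k-1}$, i.e. the point $(y,v)$ lies in $Q\cap L(\mathbf{x}_{k-1})$. Hence the line $L(\mathbf{x}_{k-1})$ meets the square $Q$, which by the elementary-geometry argument in the proof of Proposition \ref{successorprop} is equivalent to $\left\vert C\mathbf{x}_{k-1}-z_{k}\right\vert \leq\left\vert n_{1}\right\vert +1$. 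Taking the contrapositive, $\left\vert C\mathbf{x}_{k-1}-z_{k}\right\vert >\left\vert n_{1}\right\vert +1$ implies $M$ is empty.

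Next I would dispose of $T$ and $X$. By Definition \ref{defti}, every element of $T$ is built from some $\mathbf{q}\in M\left(C\mathbf{x}_{k-1},\left(\mathbf{x}_{k-1}^{\ast}\right)_{1},z_{k}\right)$, so $M=\emptyset$ gives $T=\emptyset$ directly, and then $X=X(T)=\emptyset$ since $X$ is defined as a projection of $T$. (Here one should note that the hypothesis $\mathbf{x}_{k-1}^{\ast}\in C_{S_{k-1}}^{O}(\mathbf{x}_{k-1})$ versus $\mathbf{x}_{k-1}^{\ast}\in R(\mathbf{x}_{k-1})$ is immaterial, since in either case $\left(\mathbf{x}_{k-1}^{\ast}\right)_{1}$ is just a fixed scalar $t$, and the emptiness of $M(s,t,z_{k})$ under $|s-z_k|>|n_1|+1$ was shown to hold for every $t$.)

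I expect this to be essentially routine; the only point requiring care is making the logical dependence explicit — that conditions 3 and 4 of Definition \ref{defM} only add further constraints and cannot rescue a point when conditions 1 and 2 are already infeasible — so that the reduction to the $(y,v)$-plane picture of Proposition \ref{successorprop} is valid. No genuine obstacle is anticipated: the argument is a one-paragraph observation that nonemptiness of $M$ implies nonemptiness of $Q\cap L(\mathbf{x}_{k-1})$, followed by quoting the equivalence already established.
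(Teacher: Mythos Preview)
Your proposal is correct and matches the paper's approach: the paper simply states that the proof is similar to that of Proposition~\ref{successorprop}, i.e., it too reduces to the observation that conditions~1 and~2 of Definition~\ref{defM} force $(y,v)\in Q\cap L(\mathbf{x}_{k-1})$, which is impossible when $\lvert C\mathbf{x}_{k-1}-z_k\rvert>\lvert n_1\rvert+1$. Your additional remark that $T$ and $X$ inherit emptiness from $M$ by Definition~\ref{defti} makes explicit what the paper leaves implicit.
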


The following two Propositions follow easily from the obvious fact that the
line $L\left(  \mathbf{x}_{k-1}\right)  $ can intersect the
boundary of $Q$ at most twice. Let $\mathbf{x}_{k-1}\in\partial S_{k-1}$.

\begin{proposition}
If $C_{S_{k-1}}^{O}\left(  \mathbf{x}_{k-1}\right)  $ is such that
$R_{1}=\emptyset$ then the possible values of $\operatorname*{card}\left(
X\left(  \mathbf{x}_{k-1},z_{k}\right)  \right)  $ are $0,1$ and $2.$
\end{proposition}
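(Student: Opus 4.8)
The plan is to combine Proposition \ref{successorprop} with the geometry of the line $L(\mathbf{x}_{k-1})$ and the square $Q$ from Notation \ref{morenotation}, together with the structure of the alignment constraints in Definition \ref{def_align} when $R = R_2$ or $R = R_3$. First I would note that, by Proposition \ref{propTi}, $X(\mathbf{x}_{k-1},z_k)$ is well-defined independently of the choice of $\mathbf{x}_{k-1}^\ast \in R$, so I may fix one such $\mathbf{x}_{k-1}^\ast$; since $R_1 = \emptyset$, either $R = R_2$ (so $(\mathbf{x}_{k-1}^\ast)_1 > 0$) or $R = R_3$ (so $(\mathbf{x}_{k-1}^\ast)_1 < 0$). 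In either case, by constraint 3 in Definition \ref{defM}, the quantity $t = (\mathbf{x}_{k-1}^\ast)_1$ is a fixed nonzero number, and the admissible sign patterns of $(v^\ast, y^\ast)$ satisfying $d_{m+1}v^\ast + n_{m+1}y^\ast = -t$ fall into finitely many cases.

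The key geometric observation is that any $\mathbf{q} = (v,y,v^\ast,y^\ast) \in M$ has its $(y,v)$-component lying in $Q \cap L(\mathbf{x}_{k-1})$ — this is constraints 1 and 2 of Definition \ref{defM} — and the map $\mathbf{q} \mapsto \mathbf{x}_k = A\mathbf{x}_{k-1} + Bv$ is then determined by the single coordinate $v$. Because $L(\mathbf{x}_{k-1})$ is a line, $L(\mathbf{x}_{k-1}) \cap Q$ is a (possibly empty, possibly degenerate) line segment, and as $(y,v)$ ranges over it, $v$ ranges over a closed interval $I$. The alignment condition (constraint 4, together with the sign constraint on $(v^\ast,y^\ast)$ forced by $t \neq 0$) then restricts the admissible $(y,v)$ to specific locations: when $v^\ast$ is forced to be strictly positive we need $v = 1$, when forced strictly negative we need $v = -1$, and correspondingly for $y$ relative to the boundary of $Q$. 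The upshot is that $X$ consists of those $\mathbf{x}_k = A\mathbf{x}_{k-1} + Bv$ arising from at most the two \emph{endpoints} of the segment $L(\mathbf{x}_{k-1}) \cap Q$ (the interior of the segment is excluded because $t \neq 0$ forces at least one of $v^\ast, y^\ast$ to be nonzero, hence forces $(y,v)$ to lie on $\partial Q$). Since a line meets $\partial Q$ in at most two points, $\operatorname{card}(X) \in \{0,1,2\}$: it is $0$ when $L(\mathbf{x}_{k-1})$ misses $Q$ or when no endpoint of the segment is compatible with the forced sign pattern (e.g.\ $\mathbf{x}_k^\ast = \mathbf{0}$ at each candidate), $1$ when $L(\mathbf{x}_{k-1})$ is tangent to $Q$ or only one endpoint survives, and $2$ when both endpoints yield distinct valid successors.

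I would organize the write-up as: (i) reduce to a fixed $\mathbf{x}_{k-1}^\ast \in R$ via Proposition \ref{propTi}; (ii) record that the $(y,v)$-part of any $\mathbf{q} \in M$ lies in $L(\mathbf{x}_{k-1}) \cap Q$, a segment with at most two endpoints on $\partial Q$; (iii) use $t = (\mathbf{x}_{k-1}^\ast)_1 \neq 0$ (the defining feature of $R_1 = \emptyset$) to argue via constraint 3 and the alignment relations (\ref{align_defv})–(\ref{align_defy}) that $(y,v)$ cannot lie in the relative interior of that segment, so $v$ takes at most two values; (iv) conclude $\operatorname{card}(X) \le 2$, and exhibit/indicate configurations realizing $0$, $1$ and $2$. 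The main obstacle I anticipate is step (iii): one must be careful that the alignment constraints do not \emph{also} rule out both endpoints generically — i.e.\ that the value $2$ is genuinely attainable and not merely an upper bound — and one must check the boundary sub-cases where an endpoint of $L(\mathbf{x}_{k-1}) \cap Q$ sits at a \emph{corner} of $Q$, where both the $v$-sign and $y$-sign conditions are simultaneously active; handling the finitely many sign patterns of $(v^\ast,y^\ast)$ compatible with $d_{m+1}v^\ast + n_{m+1}y^\ast = -t$ for $t \neq 0$ cleanly is the bookkeeping crux. The case $R = R_1$ (excluded here) is precisely the one where $t = 0$ permits $(y,v)$ in the interior of the segment, which is why that case can produce an edge rather than finitely many points — a contrast worth noting but not needed for this Proposition.
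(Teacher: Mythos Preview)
Your proposal is correct and follows the same geometric idea as the paper: the paper's entire justification is the sentence ``follows easily from the obvious fact that the line $L(\mathbf{x}_{k-1})$ can intersect the boundary of $Q$ at most twice,'' and your steps (ii)--(iii) spell out precisely why the $(y,v)$-part of any $\mathbf{q}\in M$ must lie on $\partial Q$ when $t\neq 0$, which is the content the paper leaves implicit. Your additional caution about attainability of each value $0,1,2$ and about the nondegeneracy condition $\mathbf{x}_k^\ast\neq\mathbf{0}$ goes beyond what the paper addresses, but does not affect the (upper-bound) argument.
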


\begin{proposition}
If $C_{S_{k-1}}^{O}\left(  \mathbf{x}_{k-1}\right)  $ is such that $R_{1}%
\neq\emptyset$ then the set $X\left(  \mathbf{x}_{k-1},z_{k}\right)  $ is
either empty, contains one element, or is the one-dimensional line
segment\newline$\left\{  \mathbf{x}_{k}:\mathbf{x}_{k}=A\mathbf{x}%
_{k-1}+Bv,\text{ }v\in\lbrack v_{\min},v_{\max}]\right\}  $ where $v_{\min}$
and $v_{\max}$ are the minimum and maximum values of $v$ for which the line
$L\left( \mathbf{x}_{k-1}\right)  $ intersects the sides of $Q.$
\end{proposition}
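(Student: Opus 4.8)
The plan is to identify $X(\mathbf{x}_{k-1},z_{k})$, under the standing hypothesis $R_{1}\neq\emptyset$, with the image of $L(\mathbf{x}_{k-1})\cap Q$ under the affine map $v\mapsto A\mathbf{x}_{k-1}+Bv$; the three-way classification then follows from the elementary geometry of a line meeting a square. First I would record the geometric facts: writing a point of $L:=L(\mathbf{x}_{k-1})$ as $(C\mathbf{x}_{k-1}+n_{1}v,v)$, the set $I:=\{v:(C\mathbf{x}_{k-1}+n_{1}v,v)\in Q\}$ is the intersection of $[-1,1]$ with $\{v:|C\mathbf{x}_{k-1}+n_{1}v-z_{k}|\leq1\}$, hence a closed interval that is empty, a single point, or a nondegenerate segment $[v_{\min},v_{\max}]$; in the last case $v_{\min}<v_{\max}$ and these are exactly the extreme values of $v$ at which $L$ meets the sides of $Q$ (observe that no line $y-n_{1}v=\text{const}$ has $v$ constant on it, so $v$ genuinely varies along $L\cap Q$ whenever that set is nondegenerate).

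Then I would establish $X=\{A\mathbf{x}_{k-1}+Bv:v\in I\}$ by two inclusions. For ``$\subseteq$'': fix any $\mathbf{x}_{k-1}^{\ast}\in R=R_{1}$ (legitimate since, by Proposition \ref{propTi}, $X$ does not depend on this choice); any $\mathbf{x}_{k}\in X$ equals $A\mathbf{x}_{k-1}+Bq_{1}$ with $\mathbf{q}=(q_{1},q_{2},q_{3},q_{4})\in M(C\mathbf{x}_{k-1},(\mathbf{x}_{k-1}^{\ast})_{1},z_{k})$, and conditions~1--2 of Definition \ref{defM} are precisely $(q_{2},q_{1})\in Q\cap L$, i.e. $q_{1}\in I$. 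For ``$\supseteq$'', which is where the hypothesis $R_{1}\neq\emptyset$ is used: since $\mathbf{x}_{k-1}^{\ast}\in R_{1}$ we have $(\mathbf{x}_{k-1}^{\ast})_{1}=0$, so for every $(y,v)\in Q\cap L$ the quadruple $(v,y,0,0)$ lies in $M(C\mathbf{x}_{k-1},0,z_{k})$: conditions~1--2 hold by the choice of $(y,v)$, condition~3 reads $0=-(\mathbf{x}_{k-1}^{\ast})_{1}=0$, and condition~4 is satisfied because every implication in Definition \ref{def_align} is vacuous when $v^{\ast}=y^{\ast}=0$ (each hypothesis $v^{\ast}>0$, $v^{\ast}<0$, $y^{\ast}>0$, $y^{\ast}<0$ fails, and each conclusion $v^{\ast}=0$, $y^{\ast}=0$ holds). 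It remains to verify the side condition $\mathbf{x}_{k}^{\ast}\neq\mathbf{0}$ from Definition \ref{defti}; here $\mathbf{x}_{k}^{\ast}=A^{\ast}\mathbf{x}_{k-1}^{\ast}+B^{\ast}\cdot0=A^{\ast}\mathbf{x}_{k-1}^{\ast}$, and from the form of $A^{\ast}$ in (\ref{ABCDstardef}) a vector with vanishing first component is sent to $A^{\ast}\mathbf{x}_{k-1}^{\ast}=((\mathbf{x}_{k-1}^{\ast})_{2},\ldots,(\mathbf{x}_{k-1}^{\ast})_{m},0)^{T}$, which is nonzero since $\mathbf{x}_{k-1}^{\ast}\neq\mathbf{0}$ (it lies in the cone $C_{S_{k-1}}^{O}(\mathbf{x}_{k-1})$). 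Hence $(A\mathbf{x}_{k-1}+Bv,A^{\ast}\mathbf{x}_{k-1}^{\ast})\in T$ and $A\mathbf{x}_{k-1}+Bv\in X$.

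To conclude, since $B=(0,\ldots,0,1)^{T}$ the affine map $v\mapsto A\mathbf{x}_{k-1}+Bv$ is injective, so $X$ is empty, a singleton, or a genuine one-dimensional segment precisely as $I$ is empty, a point, or the nondegenerate interval $[v_{\min},v_{\max}]$; the last case gives exactly the segment in the statement. The step I expect to be the real content is the reverse inclusion: the key realization is that $R_{1}\neq\emptyset$ is exactly what permits the choice $(v^{\ast},y^{\ast})=(0,0)$ in condition~3 of Definition \ref{defM}, which removes all alignment constraints on $(y,v)$; alongside this sits the short verification that $A^{\ast}$ is nonsingular on the subspace $\{(\mathbf{x}^{\ast})_{1}=0\}$. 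Everything else is the elementary geometry of a line and a square, which the text has already flagged as obvious.
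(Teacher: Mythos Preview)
Your argument is correct and follows the same line the paper indicates: the paper merely asserts that this proposition (like the preceding one) ``follow[s] easily from the obvious fact that the line $L(\mathbf{x}_{k-1})$ can intersect the boundary of $Q$ at most twice,'' whereas you have supplied the details behind this. In particular, your identification $X=\{A\mathbf{x}_{k-1}+Bv:v\in I\}$ via the choice $(v^{\ast},y^{\ast})=(0,0)$ when $\mathbf{x}_{k-1}^{\ast}\in R_{1}$, together with the check that $A^{\ast}\mathbf{x}_{k-1}^{\ast}\neq\mathbf{0}$ from the shift-like action of $A^{\ast}$ on vectors with vanishing first component, is exactly the mechanism the paper leaves implicit.
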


To proceed further we need duality. The proofs of the Theorems in this Section
are based on the duality existing between programs constructed from the
estimator and regulator systems.%

\begin{figure}[ptb]%
\centering
\includegraphics[scale=0.65]{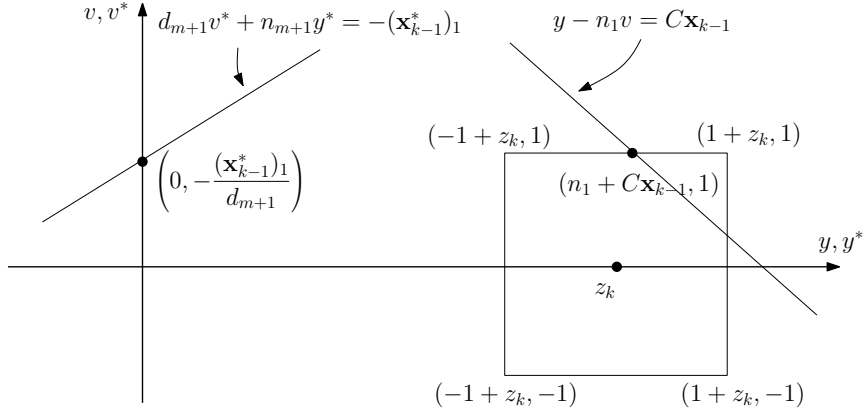}
\caption{For this measurement $z_{k},$ the location of the square implies that
the unique element in $M\left(  C\mathbf{x}_{k-1},\left(  \mathbf{x}%
_{k-1}^{\ast}\right)  _{1},z_{k}\right)  $ is $\left(  1,n_{1}+C\mathbf{x}%
_{k-1},\frac{-\left(  \mathbf{x}_{k-1}^{\ast}\right)  _{1}}{d_{m+1}},0\right)
$. }%
\label{backevmod}%
\end{figure}

\section{The estimator program $\mathcal{E}_{z_{1:k}}(\mathbf{x}^{\ast})$}

From now on we will always assume $k\geq2m$ and $S_{k}\neq\emptyset$. The
optimization problem we construct is based on the support function for the set
$S_{k}$. Since $S_{k}$ is compact its support function is $h_{S_{k}%
}(\mathbf{x}^{\ast})=\max_{\mathbf{x}\in S_{k}}\left\langle \mathbf{x}^{\ast
},\mathbf{x}\right\rangle $, and the hyperplane $\left\{  \mathbf{x}%
:\left\langle \mathbf{x}^{\ast},\mathbf{x}\right\rangle =h_{S_{k}}%
(\mathbf{x}^{\ast})\right\}  $ in the direction $\mathbf{x}^{\ast}$ supports
$S_{k}$ at $\mathbf{x}.$ For any $\mathbf{x}^{\ast}\in\mathbb{R}^{m},$ define
the \textit{estimator program}
\[
\mathcal{E}_{z_{1:k}}(\mathbf{x}^{\ast}):\text{ \ \ }\max\limits_{\mathbf{x}%
\in S_{k}}\left\langle \mathbf{x}^{\ast},\mathbf{x}\right\rangle .
\]
It has optimal value $h_{S_{k}}(\mathbf{x}^{\ast}).$ The notation
$\mathcal{E}_{z_{1:k}}(\mathbf{\cdot})$ will be used to denote the estimator
program when $\mathbf{x}^{\ast}$ is not important or not specified.

The following Proposition follows directly from the definitions.

\begin{proposition}
\label{propconeargmax} For any $\mathbf{x}\in S_{k}$ and any $\mathbf{x}%
^{\ast}\in\mathbb{R}^{m},$ there holds
\[
\mathbf{x}\in\arg\max\mathcal{E}_{z_{1:k}}(\mathbf{x}^{\ast})\Leftrightarrow
h_{S_{k}}(\mathbf{x}^{\ast})=\left\langle \mathbf{x}^{\ast},\mathbf{x}%
\right\rangle .
\]
\newline Furthermore, for any $\mathbf{x}\in\partial S_{k}$ and $\mathbf{0\neq
x}^{\ast}\in\mathbb{R}^{m}$ there holds%
\[
\mathbf{x}\in\arg\max\mathcal{E}_{z_{1:k}}(\mathbf{x}^{\ast})\Leftrightarrow
\mathbf{x}^{\ast}\in C_{S_{k}}^{O}(\mathbf{x}).
\]

\end{proposition}

If $S_{k}$ is non-empty and $\mathbf{x}^{\ast}\neq\mathbf{0}$ then optimizing
$\mathbf{x}$ must be on the boundary of $S_{k}$, and $\arg\max\mathcal{E}%
_{z_{1:k}}(\mathbf{x}_{k}^{\ast})$ is a non-empty subset of $\partial S_{k}.$
Any point in $\partial S_{k}$ will belong to $\arg\max\mathcal{E}_{z_{1:k}%
}(\mathbf{x}^{\ast})$ for some $\mathbf{x}^{\ast}\neq\mathbf{0}$. All of these
statements are simple consequences of $S_{k}$ being convex and compact. Some
elementary properties relating optimal solutions to the program $\mathcal{E}%
_{z_{1:k}}(\mathbf{x}^{\ast})$ with geometry of the polytope $S_{k}$ are
collected in the next Proposition.

\begin{proposition}
\label{basicprop}Suppose $S_{k}$ is non-empty. Then\newline1) if
$\mathbf{x}\in\partial S_{k}$ and $\mathbf{x}^{\ast}\neq\mathbf{0}$ is the
direction of any hyperplane supporting $S_{k}$ at $\mathbf{x}$, then
$\mathbf{x}\in\arg\max\mathcal{E}_{z_{1:k}}(\mathbf{x}^{\ast})$;\newline2) if
$\mathbf{x}\in\partial S_{k}$ then there exists $\mathbf{x}^{\ast}%
\neq\mathbf{0}$ for which $\mathbf{x}\in\arg\max\mathcal{E}_{z_{1:k}%
}(\mathbf{x}^{\ast})$;\newline3) if $\mathbf{x}\in\arg\max\mathcal{E}%
_{z_{1:k}}(\mathbf{x}^{\ast})$ and $\mathbf{x}^{\ast}\neq\mathbf{0}$\textbf{,}
then $\mathbf{x}^{\ast}$ is the direction of a hyperplane supporting $S_{k}$
at $\mathbf{x}$;\newline4) if $\mathbf{x}\in\arg\max\mathcal{E}_{z_{1:k}%
}(\mathbf{x}^{\ast})$ and $\mathbf{x}\in\operatorname*{int}S_{k}$, then
$\mathbf{x}^{\ast}=\mathbf{0}$\textbf{; }and\newline5) $\arg\max$
$\mathcal{E}_{z_{1:k}}(\mathbf{0})=S_{k}$.
\end{proposition}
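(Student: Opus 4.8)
The plan is to obtain all five parts as near-immediate consequences of Proposition~\ref{propconeargmax}, the definition of the cone $C_{S_k}^{O}(\mathbf{x})$ in Definition~\ref{defCone}, and two elementary facts about maximizing a linear functional over the convex compact set $S_k$: (i) if $\mathbf{x}^{\ast}\neq\mathbf{0}$ then a maximizer of $\langle\mathbf{x}^{\ast},\cdot\rangle$ over $S_k$ cannot lie in $\operatorname*{int}S_{k}$ --- otherwise a sufficiently small displacement of that point in the direction $\mathbf{x}^{\ast}$ stays in $S_k$ and strictly increases the objective --- which is in any case the assertion already recorded in the paragraph preceding the Proposition; and (ii) $C_{S_k}^{O}(\mathbf{x})\neq\emptyset$ for every $\mathbf{x}\in\partial S_k$, the basic polytope fact noted just after Definition~\ref{defCone}. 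The bridge between the optimization language and the geometric language is simply that, by the definition of $C_{S_k}^{O}$, the phrase ``$\mathbf{x}^{\ast}\neq\mathbf{0}$ is the direction of a hyperplane supporting $S_k$ at $\mathbf{x}$'' means precisely $h_{S_k}(\mathbf{x}^{\ast})=\langle\mathbf{x}^{\ast},\mathbf{x}\rangle$, i.e. $\mathbf{x}^{\ast}\in C_{S_k}^{O}(\mathbf{x})$.

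With this in place the five parts follow quickly. For 1), a supporting direction $\mathbf{x}^{\ast}\neq\mathbf{0}$ at $\mathbf{x}\in\partial S_k$ lies in $C_{S_k}^{O}(\mathbf{x})$, so the second equivalence in Proposition~\ref{propconeargmax} gives $\mathbf{x}\in\arg\max\mathcal{E}_{z_{1:k}}(\mathbf{x}^{\ast})$. For 2), pick any $\mathbf{x}^{\ast}\in C_{S_k}^{O}(\mathbf{x})$, which is non-empty by (ii) and contains only non-zero vectors by definition, and apply 1). For 3), if $\mathbf{x}\in\arg\max\mathcal{E}_{z_{1:k}}(\mathbf{x}^{\ast})$ with $\mathbf{x}^{\ast}\neq\mathbf{0}$, the first equivalence in Proposition~\ref{propconeargmax} gives $h_{S_k}(\mathbf{x}^{\ast})=\langle\mathbf{x}^{\ast},\mathbf{x}\rangle$, which is exactly $\mathbf{x}^{\ast}\in C_{S_k}^{O}(\mathbf{x})$, i.e. $\mathbf{x}^{\ast}$ is a supporting direction at $\mathbf{x}$. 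Part 4) is the contrapositive repackaging of (i): an interior maximizer is incompatible with $\mathbf{x}^{\ast}\neq\mathbf{0}$, hence $\mathbf{x}^{\ast}=\mathbf{0}$. Part 5) is immediate, since $\langle\mathbf{0},\mathbf{x}\rangle=0$ for all $\mathbf{x}$, so every point of $S_k$ attains the (zero) maximum and $\arg\max\mathcal{E}_{z_{1:k}}(\mathbf{0})=S_k$.

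I expect no genuine obstacle here: the Proposition is essentially a tidy collection of statements that have already appeared informally in the surrounding text, and the only item needing an actual (one-line) convexity argument rather than a citation is fact (i), that a non-zero linear functional attains its maximum over $S_k$ only on $\partial S_k$. Everything else is bookkeeping against Proposition~\ref{propconeargmax} and Definition~\ref{defCone}.
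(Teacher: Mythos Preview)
Your proposal is correct. The paper itself gives no proof of this Proposition at all: it is introduced with the remark that ``All of these statements are simple consequences of $S_{k}$ being convex and compact,'' and the Proposition is then simply stated and used. Your argument, routing parts 1)--3) through Proposition~\ref{propconeargmax} and Definition~\ref{defCone}, and handling 4) and 5) by the one-line convexity observation and the triviality of the zero functional, is exactly the kind of routine verification the paper is implicitly deferring to the reader.
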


A program almost identical to $\mathcal{E}_{z_{1:k}}(\mathbf{x}^{\ast})$,
denoted $\mathcal{E}_{z_{1:k}}^{\prime}(\mathbf{x}^{\ast})$, is introduced for
notational clarity. By (\ref{Sk1}) $\mathcal{E}_{z_{1:k}}(\mathbf{x}^{\ast})$
can be equivalently expressed as%
\[%
\begin{array}
[c]{c}%
\mathcal{E}_{z_{1:k}}^{\prime}(\mathbf{x}^{\ast}):\text{ \ \ }\max
\limits_{y_{1:k},v_{1:k}}\left\langle \mathbf{x}^{\ast},\mathbf{x}%
\right\rangle \\
\text{subject to }\\
\left\Vert y_{1:k}-z_{1:k}\right\Vert _{\infty}\leq1,\text{ }\left\Vert
v_{1:k}\right\Vert _{\infty}\leq1\text{ and}\\
\left[
\begin{array}
[c]{cccc}%
D_{L} &  &  & \\
D_{U} & D_{L} &  & \\
& \ddots & \ddots & \\
&  & D_{U} & D_{L}\\
&  &  & D_{U}%
\end{array}
\right]  \left[
\begin{array}
[c]{c}%
y_{1}\\
\vdots\\
y_{k}%
\end{array}
\right]  -\\
\left[
\begin{array}
[c]{cccc}%
N_{L} &  &  & \\
N_{U} & N_{L} &  & \\
& \ddots & \ddots & \\
&  & N_{U} & N_{L}\\
&  &  & N_{U}%
\end{array}
\right]  \left[
\begin{array}
[c]{c}%
v_{1}\\
\vdots\\
v_{k}%
\end{array}
\right]  =\left[
\begin{array}
[c]{c}%
B_{T}\mathbf{x}_{0}\\
0\\
\vdots\\
0\\
-B_{T}\mathbf{x}%
\end{array}
\right]
\begin{array}
[c]{c}%
\text{fixed}\\
\\
\\
\\
\text{free}%
\end{array}
\end{array}
.
\]
By (\ref{statedef}) the final state $\mathbf{x}$ satisfies $\mathbf{x=x}%
_{k}\left(  y_{1:k},v_{1:k}\right)  $. See Fig. \ref{Dfig}. The decision
variables for the program $\mathcal{E}_{z_{1:k}}^{\prime}(\mathbf{x}^{\ast})$
are the outputs and inputs of the estimation system up to time $k.$ From now
on we put $(\mathbf{y},\mathbf{v}):=\left(  y_{1:k},v_{1:k}\right)  ,$ and
later $(\mathbf{y}^{\ast},\mathbf{v}^{\ast}):=\left(  y_{1:k}^{\ast}%
,v_{1:k}^{\ast}\right)  $. The relationship between estimator signals
$(\mathbf{y},\mathbf{v})$ and states $\mathbf{x}\in\partial S_{k}$ occurring
as optimizing decision variables in the programs $\mathcal{E}_{z_{1:k}%
}(\mathbf{x}^{\ast})$ and $\mathcal{E}_{z_{1:k}}^{\prime}(\mathbf{x}^{\ast})$
is summarized in the following Proposition.

\begin{proposition}
\label{basicprop1} 1) For all $\mathbf{x}\in\partial S_{k}$, and for all
$\mathbf{x}^{\ast}\in C_{S_{k}}^{O}(\mathbf{x}),$ there exists $\left(
\mathbf{y},\mathbf{v}\right)  $ $\in\arg\max\mathcal{E}_{z_{1:k}}^{\prime
}(\mathbf{x}^{\ast})$ for which $\mathbf{x}=\mathbf{x}_{k}\left(
\mathbf{y},\mathbf{v}\right)  $.\newline2) Suppose $\mathbf{x}^{\ast}%
\in\mathbb{R}^{m}$ and $(\mathbf{y},\mathbf{v})\in\mathbb{R}^{k}%
\times\mathbb{R}^{k}$. Then $(\mathbf{y},\mathbf{v})\in\arg\max\mathcal{E}%
_{z_{1:k}}^{\prime}(\mathbf{x}^{\ast})$ if and only if $\mathbf{x}%
_{k}(\mathbf{y},\mathbf{v})\in\arg\max\mathcal{E}_{z_{1:k}}(\mathbf{x}^{\ast
})$.
\end{proposition}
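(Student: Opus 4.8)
The idea is that $\mathcal{E}'_{z_{1:k}}(\mathbf{x}^{\ast})$ is just $\mathcal{E}_{z_{1:k}}(\mathbf{x}^{\ast})$ with its feasible set and objective pulled back along the affine surjection $\Phi\colon(\mathbf{y},\mathbf{v})\mapsto\mathbf{x}_{k}(\mathbf{y},\mathbf{v})$ of (\ref{statedef}). First I would identify the feasible set $F'$ of $\mathcal{E}'_{z_{1:k}}(\mathbf{x}^{\ast})$: its last block of equality constraints, $D_{U}y_{k-m+1:k}-N_{U}v_{k-m+1:k}=-B_{T}\mathbf{x}$, has a \emph{free} right-hand side, so by (\ref{statedef}) and invertibility of $B_{T}$ it merely records $\mathbf{x}=\mathbf{x}_{k}(\mathbf{y},\mathbf{v})=\Phi(\mathbf{y},\mathbf{v})$ and places no restriction on $(\mathbf{y},\mathbf{v})$. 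Hence $F'=\{(\mathbf{y},\mathbf{v}):\left\Vert \mathbf{v}\right\Vert _{\infty}\leq1,\ \left\Vert \mathbf{y}-z_{1:k}\right\Vert _{\infty}\leq1,\ \text{(\ref{conv_lengthk}) holds}\}$, and for $(\mathbf{y},\mathbf{v})\in F'$ the objective of $\mathcal{E}'_{z_{1:k}}(\mathbf{x}^{\ast})$ equals $\left\langle \mathbf{x}^{\ast},\Phi(\mathbf{y},\mathbf{v})\right\rangle$. Comparing $F'$ against the description (\ref{Sk1}) of $S_{k}$ gives $\Phi(F')=S_{k}$, and therefore
\[
\max_{(\mathbf{y},\mathbf{v})\in F'}\left\langle \mathbf{x}^{\ast},\Phi(\mathbf{y},\mathbf{v})\right\rangle =\max_{\mathbf{x}\in S_{k}}\left\langle \mathbf{x}^{\ast},\mathbf{x}\right\rangle =h_{S_{k}}(\mathbf{x}^{\ast}),
\]
so $\mathcal{E}_{z_{1:k}}(\mathbf{x}^{\ast})$ and $\mathcal{E}'_{z_{1:k}}(\mathbf{x}^{\ast})$ share the optimal value $h_{S_{k}}(\mathbf{x}^{\ast})$.

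For part 1), take $\mathbf{x}\in\partial S_{k}$ and $\mathbf{x}^{\ast}\in C_{S_{k}}^{O}(\mathbf{x})$. By the definition of $C_{S_{k}}^{O}(\mathbf{x})$ one has $\left\langle \mathbf{x}^{\ast},\mathbf{x}\right\rangle =h_{S_{k}}(\mathbf{x}^{\ast})$, so $\mathbf{x}\in\arg\max\mathcal{E}_{z_{1:k}}(\mathbf{x}^{\ast})$ by Proposition \ref{propconeargmax}. Since $\Phi(F')=S_{k}$, pick $(\mathbf{y},\mathbf{v})\in F'$ with $\Phi(\mathbf{y},\mathbf{v})=\mathbf{x}$; then $(\mathbf{y},\mathbf{v})$ is feasible for $\mathcal{E}'_{z_{1:k}}(\mathbf{x}^{\ast})$ and attains the value $\left\langle \mathbf{x}^{\ast},\mathbf{x}\right\rangle =h_{S_{k}}(\mathbf{x}^{\ast})$, i.e.\ the optimal value; hence $(\mathbf{y},\mathbf{v})\in\arg\max\mathcal{E}'_{z_{1:k}}(\mathbf{x}^{\ast})$ and $\mathbf{x}_{k}(\mathbf{y},\mathbf{v})=\mathbf{x}$, as claimed.

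For part 2), I would argue from two characterizations. A point $(\mathbf{y},\mathbf{v})$ lies in $\arg\max\mathcal{E}'_{z_{1:k}}(\mathbf{x}^{\ast})$ precisely when $(\mathbf{y},\mathbf{v})\in F'$ and $\left\langle \mathbf{x}^{\ast},\Phi(\mathbf{y},\mathbf{v})\right\rangle =h_{S_{k}}(\mathbf{x}^{\ast})$; and, by Proposition \ref{propconeargmax}, $\mathbf{x}_{k}(\mathbf{y},\mathbf{v})$ lies in $\arg\max\mathcal{E}_{z_{1:k}}(\mathbf{x}^{\ast})$ precisely when $\Phi(\mathbf{y},\mathbf{v})\in S_{k}$ and $\left\langle \mathbf{x}^{\ast},\Phi(\mathbf{y},\mathbf{v})\right\rangle =h_{S_{k}}(\mathbf{x}^{\ast})$. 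The ``only if'' direction is immediate, because membership of $(\mathbf{y},\mathbf{v})$ in $\arg\max\mathcal{E}'_{z_{1:k}}(\mathbf{x}^{\ast})$ forces $(\mathbf{y},\mathbf{v})\in F'$, hence $\Phi(\mathbf{y},\mathbf{v})\in S_{k}$, and the remaining condition is identical in the two statements. For the ``if'' direction, $(\mathbf{y},\mathbf{v})$, being an estimator signal serving as a decision variable of $\mathcal{E}'_{z_{1:k}}(\mathbf{x}^{\ast})$, lies in $F'$, so $\Phi(\mathbf{y},\mathbf{v})\in S_{k}$ is automatic and the two characterizations coincide.

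The linear-algebra reductions around (\ref{statedef}), (\ref{conv_lengthk}) and (\ref{Sk1}) are routine. The step deserving care is the ``if'' half of part 2): one must use that $(\mathbf{y},\mathbf{v})$ is feasible for $\mathcal{E}'_{z_{1:k}}(\mathbf{x}^{\ast})$, since $\mathbf{x}_{k}(\mathbf{y},\mathbf{v})$ depends only on $(y_{k-m+1:k},v_{k-m+1:k})$, so that membership of $\mathbf{x}_{k}(\mathbf{y},\mathbf{v})$ in $\arg\max\mathcal{E}_{z_{1:k}}(\mathbf{x}^{\ast})$ does not by itself entail the norm bounds or the dynamics (\ref{conv_lengthk}) for all of $(\mathbf{y},\mathbf{v})$.
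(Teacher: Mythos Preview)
Your proof is correct and matches the paper's intended reasoning: the paper does not give an explicit proof of this proposition but treats it as an immediate consequence of the equivalence between $\mathcal{E}_{z_{1:k}}(\mathbf{x}^{\ast})$ and $\mathcal{E}'_{z_{1:k}}(\mathbf{x}^{\ast})$ set up via (\ref{Sk1}) and (\ref{statedef}), which is exactly the surjection $\Phi$ you make explicit. Your observation about the ``if'' half of part~2) is well taken---the proposition as literally stated (with $(\mathbf{y},\mathbf{v})\in\mathbb{R}^{k}\times\mathbb{R}^{k}$ arbitrary) would fail, since $\mathbf{x}_{k}(\mathbf{y},\mathbf{v})$ depends only on the last $m$ coordinates; the paper's preamble to the proposition, describing $(\mathbf{y},\mathbf{v})$ as ``decision variables'' of $\mathcal{E}'_{z_{1:k}}(\mathbf{x}^{\ast})$, confirms that feasibility is tacitly assumed, just as you invoke it.
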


Note also that the origin may or may not be in $S_{k}$. If $S_{k}$ does not
contain the origin then there will exist $\mathbf{x}^{\ast}$ for which
$h_{S_{k}}\left(  \mathbf{x}^{\ast}\right)  <0.$

\section{The regulator program $\mathcal{R}_{z_{1:k}}(\mathbf{x}^{\ast})$}

We would like to use a dynamic programming style argument to determine all
optimal solutions to the program $\mathcal{E}_{z_{1:k}}(\mathbf{x}_{k}^{\ast
})$ recursively from a known optimal solution to $\mathcal{E}_{z_{1:k-1}%
}(\mathbf{x}_{k-1}^{\ast}),$ where $\mathbf{x}_{k}^{\ast}$ is determined
recursively from $\mathbf{x}_{k-1}^{\ast}.$ Such a recursion would yield
point(s) on the boundary of the feasible set for $\mathcal{E}_{z_{1:k}%
}(\mathbf{x}_{k}^{\ast}),$ the desired points on the boundary of $S_{k}.$
However, the cost function for the program $\mathcal{E}_{z_{1:k}}%
(\mathbf{x}_{k}^{\ast})$ is not in a form suitable for application of dynamic
programming. We make use of a program with a dual pairing to $\mathcal{E}%
_{z_{1:k}}(\mathbf{x}_{k}^{\ast}),$ termed the \textit{regulator program}, and
denoted $\mathcal{R}_{z_{1:k}}(\mathbf{x}_{k}^{\ast}),$ for which the cost
function is of a suitable form. Although a straightforward application of
dynamic programming to $\mathcal{R}_{z_{1:k}}(\mathbf{x}_{k}^{\ast})$ by
itself does not yield a computationally tractable recursion, we show that
linking the optimal solutions to $\mathcal{R}_{z_{1:k}}(\mathbf{x}_{k}^{\ast
})$ and $\mathcal{E}_{z_{1:k}}(\mathbf{x}_{k}^{\ast})$ through alignment
(complementary slackness) conditions, in conjunction with the use of dynamic
programming, does yield the desired recursion.

The duality between $\mathcal{R}_{z_{1:k}}(\mathbf{x}^{\ast})$ and
$\mathcal{E}_{z_{1:k}}(\mathbf{x}^{\ast})$ will now be interpreted in the
structural form required to carry through this plan. The \textit{regulator
program} is defined as:%
\[%
\begin{array}
[c]{c}%
\mathcal{R}_{z_{1:k}}(\mathbf{x}^{\ast}):\text{ \ \ }\min\limits_{\mathbf{y}%
^{\ast},\mathbf{v}^{\ast}}\left\{  \left\Vert \mathbf{y}^{\ast}\right\Vert
_{1}+\left\Vert \mathbf{v}^{\ast}\right\Vert _{1}+\left\langle y_{1:k}^{\ast
},z_{1:k}\right\rangle +\left\langle \mathbf{x}_{0}^{\ast},\mathbf{x}%
_{0}\right\rangle \right\} \\
\text{subject to }\\
\left[
\begin{array}
[c]{cccc}%
N_{U}^{T} &  &  & \\
N_{L}^{T} & N_{U}^{T} &  & \\
& \ddots & \ddots & \\
&  & N_{L}^{T} & N_{U}^{T}\\
&  &  & N_{L}^{T}%
\end{array}
\right]  \left[
\begin{array}
[c]{c}%
y_{1}^{\ast}\\
\vdots\\
y_{k}^{\ast}%
\end{array}
\right]  +\\
\left[
\begin{array}
[c]{cccc}%
D_{U}^{T} &  &  & \\
D_{L}^{T} & D_{U}^{T} &  & \\
& \ddots & \ddots & \\
&  & D_{L}^{T} & D_{U}^{T}\\
&  &  & D_{L}^{T}%
\end{array}
\right]  \left[
\begin{array}
[c]{c}%
v_{1}^{\ast}\\
\vdots\\
v_{k}^{\ast}%
\end{array}
\right]  =\left[
\begin{array}
[c]{c}%
-\mathbf{x}_{0}^{\ast}\\
0\\
\vdots\\
0\\
\mathbf{x}^{\ast}%
\end{array}
\right]
\begin{array}
[c]{c}%
\text{free}\\
\\
\\
\\
\text{fixed}%
\end{array}
\end{array}
\]

The decision variables are the inputs and outputs of the regulator system up
to time $k$ described in Section \ref{sssection}. See Fig. \ref{Pfig}. If the
measurements are all zero, $\mathbf{x}_{0}=\mathbf{0}$ and $k\rightarrow
\infty$, then $\mathcal{R}_{z_{1:k}}(\mathbf{x}^{\ast})$ has an interpretation
as a time-reversed deterministic $l_{1}$-norm regulator problem, where the
input and output signals are made as small as possible and driven
asymptotically to zero.

A formal statement of the duality existing between $\mathcal{E}_{z_{1:k}%
}^{\prime}(\mathbf{x}^{\ast})$ and $\mathcal{R}_{z_{1:k}}(\mathbf{x}^{\ast})$
is now stated.

\begin{proposition}
\label{primdaulfixed}Suppose the set $S_{k}$ is non-empty. Then the optimal
values of $\mathcal{E}_{z_{1:k}}^{\prime}(\mathbf{x}^{\ast})$ and
$\mathcal{R}_{z_{1:k}}(\mathbf{x}^{\ast})$ are finite and equal. Furthermore,
if $(\mathbf{y},\mathbf{v})$ and $(\mathbf{y}^{\ast},\mathbf{v}^{\ast})$ are
feasible for $\mathcal{E}_{z_{1:k}}^{^{\prime}}(\mathbf{x}^{\ast})$ and
$\mathcal{R}_{z_{1:k}}(\mathbf{x}^{\ast}),$ respectively, then a necessary and
sufficient condition that they both be optimal is that they be
\textit{aligned.}
\end{proposition}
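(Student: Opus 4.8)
The plan is to recognize $\mathcal{E}_{z_{1:k}}^{\prime}(\mathbf{x}^{\ast})$ and $\mathcal{R}_{z_{1:k}}(\mathbf{x}^{\ast})$ as a primal--dual pair of linear programs and to invoke LP strong duality together with the complementary slackness characterization of joint optimality, interpreting complementary slackness as the alignment conditions of Definition \ref{def_align}. First I would write $\mathcal{E}_{z_{1:k}}^{\prime}(\mathbf{x}^{\ast})$ in standard LP form: the objective is $\langle\mathbf{x}^{\ast},\mathbf{x}_{k}(\mathbf{y},\mathbf{v})\rangle$, which by (\ref{statedef}) is a linear functional of $(\mathbf{y},\mathbf{v})$ (explicitly $\langle\mathbf{x}^{\ast}, B_T^{-1}(-D_U y_{k-m+1:k}+N_U v_{k-m+1:k})\rangle$, or equivalently using the other branch), and the constraints are the box constraints $\|y_{1:k}-z_{1:k}\|_\infty\le 1$, $\|v_{1:k}\|_\infty\le 1$ together with the convolution equality with right-hand side having the fixed block $B_T\mathbf{x}_0$ at the top and the free block $-B_T\mathbf{x}$ at the bottom. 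Because the last block row of the constraint matrix is $D_U y_{1:k}$ paired with $\mathbf{x}$ free, that equation simply \emph{defines} $\mathbf{x}$ and imposes no restriction; so the genuine constraints are exactly those describing $S_k$ via (\ref{Sk1}).

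Next I would take the LP dual of $\mathcal{E}_{z_{1:k}}^{\prime}(\mathbf{x}^{\ast})$ and verify that it coincides with $\mathcal{R}_{z_{1:k}}(\mathbf{x}^{\ast})$ as written. The dual variables split into multipliers for the equality constraints --- these become the regulator signals $(\mathbf{y}^{\ast},\mathbf{v}^{\ast})$ after identifying the block structure, the transpose of the Toeplitz convolution matrix in $\mathcal{E}^{\prime}$ being precisely the block-Toeplitz matrix with blocks $N_U^T,N_L^T,D_U^T,D_L^T$ appearing in $\mathcal{R}$ --- and multipliers for the box constraints, which after the usual splitting of $\|\cdot\|_\infty\le 1$ constraints contribute the $\ell_1$ penalty terms $\|\mathbf{y}^{\ast}\|_1+\|\mathbf{v}^{\ast}\|_1$ to the dual objective. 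The constant terms $\langle y_{1:k}^{\ast},z_{1:k}\rangle$ and $\langle\mathbf{x}_0^{\ast},\mathbf{x}_0\rangle$ in the $\mathcal{R}$ objective arise from the nonzero right-hand sides $z_{1:k}$ (the centering of the box on $z_k$) and $B_T\mathbf{x}_0$ respectively; the free-versus-fixed labels on the right-hand side of $\mathcal{R}$ are exactly the duals of the fixed-versus-free labels in $\mathcal{E}^{\prime}$, with the bottom block $\mathbf{x}^{\ast}$ fixed because $\mathbf{x}$ was free. This matrix bookkeeping, tying the Gohberg--Semencul/Toeplitz transpose structure of the two programs together, is the step I expect to be the main obstacle: it is not deep but it is where an error would hide, and it is essential that the block rows line up so that the dual constraint is the regulator convolution (\ref{state_def_star}) with the stated right-hand side.

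With the programs identified as a primal--dual pair, finiteness and equality of optimal values follows from LP strong duality, using that $S_k\neq\emptyset$ gives primal feasibility (hence, since $S_k$ is bounded, a finite primal optimum) and therefore a finite dual optimum with no gap. For the alignment characterization I would write out the complementary slackness conditions for this primal--dual pair and check termwise that they are exactly the conditions in (\ref{align_defv}) and (\ref{align_defy}) applied coordinatewise: a strictly positive dual coordinate $v_j^{\ast}$ forces the corresponding primal box constraint $v_j\le 1$ to be active, i.e.\ $v_j=1$; $v_j^{\ast}<0$ forces $v_j=-1$; and a slack primal constraint $|v_j|<1$ forces $v_j^{\ast}=0$ (with the analogous statements for $y_j^{\ast}$ and $y_j-z_k$, where the centering on $z_k$ matches the constant $\langle y_{1:k}^{\ast},z_{1:k}\rangle$ in the objective). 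Since for a primal--dual LP pair a feasible pair is jointly optimal if and only if complementary slackness holds, and complementary slackness here is precisely alignment, the necessary-and-sufficient claim follows. I would remark that no constraint qualification issue arises because these are linear programs, so strong duality and the complementary slackness theorem apply unconditionally once feasibility is in hand.
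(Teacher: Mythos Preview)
Your proposal is correct and follows essentially the same approach as the paper: identify $\mathcal{E}_{z_{1:k}}^{\prime}(\mathbf{x}^{\ast})$ and $\mathcal{R}_{z_{1:k}}(\mathbf{x}^{\ast})$ as an LP primal--dual pair, invoke strong duality, and read off alignment as complementary slackness. The only notable difference is the direction of dualization. The paper takes $\mathcal{R}_{z_{1:k}}(\mathbf{x}^{\ast})$ as the primal, linearizes the $\ell_1$ terms by the standard non-negative splitting $y_j^{\ast}=y_j^{\ast+}-y_j^{\ast-}$, $v_j^{\ast}=v_j^{\ast+}-v_j^{\ast-}$, and then shows that the asymmetric LP dual, after the change of variables $\mathbf{v}=D_k\boldsymbol{\lambda}+[D_U\mathbf{x}_0;0]$, $\mathbf{y}=N_k\boldsymbol{\lambda}+[N_U\mathbf{x}_0;0]$ and an application of the Gohberg--Semencul identity (\ref{def:BTmatrix}), is exactly $\mathcal{E}_{z_{1:k}}^{\prime}(\mathbf{x}^{\ast})$ with $\mathbf{x}_k(\mathbf{y},\mathbf{v})=\boldsymbol{\lambda}_{k-m+1:k}$. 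In your direction the same ingredients are needed, but your sentence identifying the \emph{equality} multipliers with $(\mathbf{y}^{\ast},\mathbf{v}^{\ast})$ is slightly off: it is the (signed) box-constraint multipliers that become $(\mathbf{y}^{\ast},\mathbf{v}^{\ast})$ and produce the $\ell_1$ terms, while the $k$ equality multipliers play the role of the paper's $\boldsymbol{\lambda}$ and are eliminated via the substitution that encodes the convolution relation; Gohberg--Semencul is then what matches the resulting dual constraint and objective to those of $\mathcal{R}_{z_{1:k}}(\mathbf{x}^{\ast})$. With that correction the bookkeeping goes through and the two routes are equivalent.
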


\begin{proof}
The proof in outline follows standard linear programming arguments. The
Gohberg-Semencul formula (\ref{def:BTmatrix}) is also required. Details are in
the Appendix.
\end{proof}

\begin{remark}
For the program $\mathcal{R}_{z_{1:k}}(\mathbf{x}^{\ast})$ the initial state
is free, and the terminal state is fixed, at $\mathbf{x}^{\ast}$. For the
program $\mathcal{E}_{z_{1:k}}^{\prime}(\mathbf{x}^{\ast})$ the initial state
is fixed, at $\mathbf{x}_{0}$, and the terminal state is free$.$
\end{remark}



%

\begin{figure}[ptb]%
\centering
\includegraphics[scale=0.6]{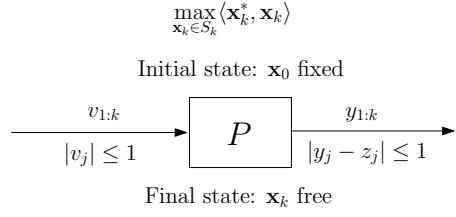}
\caption{The program $\mathcal{E}_{z_{1:k}}(\mathbf{x}_{k}^{\ast}).$ If
$\mathbf{x}_{k}^{\ast}\neq\mathbf{0}$ then optimizing $\mathbf{x}_{k}$ are
points on the boundary of $S_{k}.$}%
\label{Dfig}%
\end{figure}

%
\begin{figure}[ptb]%
\centering
\includegraphics[scale=0.6]{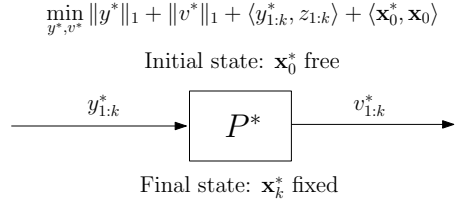}
\caption{The program $\mathcal{R}_{z_{1:k}}(\mathbf{x}_{k}^{\ast})$}%
\label{Pfig}%
\end{figure}


\section{Combined recursion in the estimator and regulator
programs\label{combrecSect}}

Our goal is determine when and how a state $\mathbf{x}_{k-1}\in\partial
S_{k-1}$ is propagated to a successor $\mathbf{x}_{k}\in\partial S_{k}$. Now
$\mathbf{x}_{k-1}$ has at least one associated worst-case disturbance
$v_{1:k-1},$ and if $\left\langle \mathbf{x}_{k-1}^{\ast},\mathbf{x}%
_{k-1}\right\rangle =h_{S_{k-1}}(\mathbf{x}_{k-1}^{\ast})$ then $\left(
y_{1:k-1},v_{1:k-1}\right)  \in\arg\max\mathcal{E}_{z_{1:k-1}}^{\prime
}(\mathbf{x}_{k-1}^{\ast})$ where, by (\ref{state_meas1}), $y_{1:k-1}$ is
uniquely determined by $v_{1:k-1},$ $z_{1:k-1}$ and $\mathbf{x}_{0}.$ By
Proposition \ref{primdaulfixed}, for all such $\left(  y_{1:k-1}%
,v_{1:k-1}\right)  \in\arg\max\mathcal{E}_{z_{1:k-1}}^{\prime}(\mathbf{x}%
_{k-1}^{\ast}),$ there exists $\left(  y_{1:k-1}^{\ast},v_{1:k-1}^{\ast
}\right)  \in\arg\min$$\mathcal{R}_{z_{1:k-1}}(\mathbf{x}_{k-1}^{\ast}),$ and
$\left(  y_{1:k-1}^{\ast},v_{1:k-1}^{\ast}\right)  $ is aligned with $\left(
y_{1:k-1},v_{1:k-1}\right)  .$ The next Proposition yields useful extensions
to $\left(  y_{1:k-1},v_{1:k-1}\right)  $ and $\left(  y_{1:k-1}^{\ast
},v_{1:k-1}^{\ast}\right)  $.

\begin{proposition}
\label{mainthm} Suppose $\mathbf{x}_{k-1}$ and $\mathbf{x}_{k-1}^{\ast}$
satisfying $\left\langle \mathbf{x}_{k-1},\mathbf{x}_{k-1}^{\ast}\right\rangle
=h_{S_{k-1}}(\mathbf{x}_{k-1}^{\ast})$ are given. Select any $\left(
y_{1:k-1},v_{1:k-1}\right)  \in\arg\max\mathcal{E}_{z_{1:k-1}}^{\prime
}(\mathbf{x}_{k-1}^{\ast}),$ and any \linebreak $\left(  y_{1:k-1}^{\ast},v_{1:k-1}%
^{\ast}\right)  \in\arg\min$$\mathcal{R}_{z_{1:k-1}}(\mathbf{x}_{k-1}^{\ast
}).$ Then $\left(  y_{1:k},v_{1:k}\right)  \in\arg\max\mathcal{E}_{z_{1:k}%
}^{\prime}(A^{\ast}\mathbf{x}_{k-1}^{\ast}+B^{\ast}y_{k}^{\ast})$ and $\left(
y_{1:k}^{\ast},v_{1:k}^{\ast}\right)  \in\arg\min$$\mathcal{R}_{z_{1:k}%
}(A^{\ast}\mathbf{x}_{k-1}^{\ast}+B^{\ast}y_{k}^{\ast})$ if $\left(
v_{k},y_{k},v_{k}^{\ast},y_{k}^{\ast}\right)  \in \linebreak M\left(  C\mathbf{x}%
_{k-1},\left(  \mathbf{x}_{k-1}^{\ast}\right)  _{1},z_{k}\right)  $.
\end{proposition}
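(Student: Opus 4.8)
The plan is to verify the membership statements by appealing directly to the duality of Proposition~\ref{primdaulfixed}, applied at time $k$ rather than time $k-1$. First I would extend the given signals by one step: set $(y_{1:k},v_{1:k})$ to be the concatenation of the optimal estimator signals $(y_{1:k-1},v_{1:k-1})$ with the new pair $(y_k,v_k)$ coming from $\mathbf{q}\in M(C\mathbf{x}_{k-1},(\mathbf{x}_{k-1}^\ast)_1,z_k)$, and likewise extend $(y_{1:k-1}^\ast,v_{1:k-1}^\ast)$ by $(y_k^\ast,v_k^\ast)$. The first task is to check these extended signals are \emph{feasible} for $\mathcal{E}_{z_{1:k}}^{\prime}(\mathbf{x}_k^\ast)$ and $\mathcal{R}_{z_{1:k}}(\mathbf{x}_k^\ast)$ respectively, where $\mathbf{x}_k^\ast := A^\ast\mathbf{x}_{k-1}^\ast + B^\ast y_k^\ast$. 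The box constraints $\left\vert v_k\right\vert\le 1$, $\left\vert y_k - z_k\right\vert\le 1$ hold by condition~1 of Definition~\ref{defM}; the constraint rows of the two big banded systems that involve only indices $\le k-1$ are inherited from feasibility at time $k-1$; and the one new row in each system is exactly what conditions~2 and~3 of Definition~\ref{defM} encode once one recognizes, via the state-space formulas \eqref{ss2}, \eqref{statedef}, \eqref{adjointstate1}--\eqref{state_def_star} and the Gohberg--Semencul identity \eqref{def:BTmatrix}, that $y_k - n_1 v_k = C\mathbf{x}_{k-1}$ is the plant output equation and $d_{m+1}v_k^\ast + n_{m+1}y_k^\ast = -(\mathbf{x}_{k-1}^\ast)_1$ is the corresponding regulator row with terminal state $\mathbf{x}_k^\ast$. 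The key bookkeeping point is that appending row $k$ to $\mathcal{E}^{\prime}$ promotes the previously-free terminal block $-B_T\mathbf{x}_{k-1}$ to an interior zero block while introducing the new free terminal block $-B_T\mathbf{x}_k$, and symmetrically for $\mathcal{R}$; so the extended signals genuinely live in the time-$k$ programs, with $\mathbf{x}_k = \mathbf{x}_k(y_{1:k},v_{1:k})$ by \eqref{statedef}.

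Next I would show the extended signals are \emph{aligned at all times $1,\dots,k$}. For times $1,\dots,k-1$ this is exactly the alignment already furnished by Proposition~\ref{primdaulfixed} at time $k-1$, which holds because $(y_{1:k-1},v_{1:k-1})$ and $(y_{1:k-1}^\ast,v_{1:k-1}^\ast)$ were chosen optimal for $\mathcal{E}_{z_{1:k-1}}^{\prime}(\mathbf{x}_{k-1}^\ast)$ and $\mathcal{R}_{z_{1:k-1}}(\mathbf{x}_{k-1}^\ast)$; note that the alignment relations \eqref{align_defv}--\eqref{align_defy} at a given time $j\le k-1$ depend only on $z_j$ and on the signals at time $j$, so they are unaffected by the extension. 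At time $k$, alignment of $(y_k,v_k)$ with $(y_k^\ast,v_k^\ast)$ is precisely condition~4 of Definition~\ref{defM}. Hence the full pair $(y_{1:k},v_{1:k})$ is aligned with $(y_{1:k}^\ast,v_{1:k}^\ast)$ in the extended sense defined after Definition~\ref{def_align}.

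Finally, I would invoke the "furthermore" half of Proposition~\ref{primdaulfixed}: feasibility of both signals for the time-$k$ programs $\mathcal{E}_{z_{1:k}}^{\prime}(\mathbf{x}_k^\ast)$ and $\mathcal{R}_{z_{1:k}}(\mathbf{x}_k^\ast)$ together with alignment is sufficient for both to be \emph{optimal}, which is exactly the claimed conclusion $(y_{1:k},v_{1:k})\in\arg\max\mathcal{E}_{z_{1:k}}^{\prime}(\mathbf{x}_k^\ast)$ and $(y_{1:k}^\ast,v_{1:k}^\ast)\in\arg\min\mathcal{R}_{z_{1:k}}(\mathbf{x}_k^\ast)$. (One should record that $S_k\neq\emptyset$, which is guaranteed since the extended estimator signal is feasible, so Proposition~\ref{primdaulfixed} applies at time $k$.) I expect the main obstacle to be the first step — cleanly matching the new rows of the two banded constraint systems against conditions~2 and~3 of Definition~\ref{defM}: this requires carefully peeling the last block-row of the Toeplitz systems in $\mathcal{E}^{\prime}$ and $\mathcal{R}$, using the Gohberg--Semencul factorization $B_T = D_L N_U - N_L D_U = N_U D_L - D_U N_L$ and the definition \eqref{statedef} of $\mathbf{x}_k$, and identifying the first component of $B_T\mathbf{x}_k$ and the scalar output/input equations with the quantities $C\mathbf{x}_{k-1}$, $(\mathbf{x}_{k-1}^\ast)_1$, $n_1$, $d_{m+1}$, $n_{m+1}$ appearing in $M$. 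Everything after that is routine transcription of already-established duality.
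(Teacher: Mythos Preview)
Your proposal is correct and follows essentially the same approach as the paper's proof: extend both signal pairs by one step, verify feasibility for the time-$k$ programs using conditions 1--3 of Definition~\ref{defM} together with the state-space equations, inherit alignment for times $1,\dots,k-1$ from optimality at time $k-1$ and use condition~4 at time $k$, then apply Proposition~\ref{primdaulfixed}. The paper dispatches your anticipated ``main obstacle'' more directly than you expect, simply by reading off \eqref{ss2} (so $y_k-n_1 v_k=C\mathbf{x}_{k-1}$ is already the output equation) and \eqref{adjointstate} (so $d_{m+1}v_k^\ast+n_{m+1}y_k^\ast=-(\mathbf{x}_{k-1}^\ast)_1$ is the regulator output equation), rather than peeling Toeplitz block-rows via Gohberg--Semencul.
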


\begin{proof}
First note that, from the discussion above, there does exist $\left(
y_{1:k-1},v_{1:k-1}\right)  \in\arg\max\mathcal{E}_{z_{1:k-1}}(\mathbf{x}%
_{k-1}^{\ast})$ and $\left(  y_{1:k-1}^{\ast},v_{1:k-1}^{\ast}\right)  \in
\arg\min$$\mathcal{R}_{z_{1:k-1}}(\mathbf{x}_{k-1}^{\ast}),$ and that \linebreak $\left(
y_{1:k-1}^{\ast},v_{1:k-1}^{\ast}\right)  $ is aligned with $\left(
y_{1:k-1},v_{1:k-1}\right)  $.

Suppose $\left(  v_{k},y_{k},v_{k}^{\ast},y_{k}^{\ast}\right)  \in M\left(
C\mathbf{x}_{k-1},\left(  \mathbf{x}_{k-1}^{\ast}\right)  _{1},z_{k}\right)
$. It follows from the state space representation of the estimator system
(\ref{ss2}) that, since $y_{k}$ satisfies $y_{k}-n_{1}v_{k}=C\mathbf{x}_{k-1}$
(that is $y_{k}=C\mathbf{x}_{k-1}+D_{1}v_{k}$), then $\mathbf{x}%
_{k}:=A\mathbf{x}_{k-1}+Bv_{k}$ satisfies $\mathbf{x}_{k}=\mathbf{x}%
_{k}(\mathbf{y},\mathbf{v}),$ where $(\mathbf{y},\mathbf{v})=\left(
y_{1:k},v_{1:k}\right)  $ satisfies the matrix contraint equations for
$\mathcal{E}_{z_{1:k}}(\mathbf{\cdot}).$ Since also $\left\vert v_{k}%
\right\vert \leq1$ and $\left\vert y_{k}-z_{k}\right\vert \leq1$ hold it
follows that $(\mathbf{y},\mathbf{v})$ is feasible for $\mathcal{E}_{z_{1:k}%
}(\mathbf{\cdot})$.

From the regulator system representation (\ref{adjointstate1}),
(\ref{adjointstate}), satisfaction of $v_{k}^{\ast}=C^{\ast}\mathbf{x}%
_{k-1}^{\ast}+D_{1}^{\ast}y_{k}^{\ast}$ by $y_{k}^{\ast}$ and $v_{k}^{\ast}$
implies $(\mathbf{y}^{\ast},\mathbf{v}^{\ast})=\left(  y_{1:k}^{\ast}%
,v_{1:k}^{\ast}\right)  $ is feasible for $\mathcal{R}_{z_{1:k}}%
(\mathbf{x}_{k}^{\ast}),$ where $\mathbf{x}_{k}^{\ast}:=A^{\ast}%
\mathbf{x}_{k-1}^{\ast}+B^{\ast}y_{k}^{\ast}.$ Since $\left(  y_{1:k-1}%
,v_{1:k-1}\right)  $ is aligned with $\left(  y_{1:k-1}^{\ast},v_{1:k-1}%
^{\ast}\right)  ,$ and $\left(  v_{k},y_{k},v_{k}^{\ast},y_{k}^{\ast}\right)
\in M\left(  C\mathbf{x}_{k-1},\left(  \mathbf{x}_{k-1}^{\ast}\right)
_{1},z_{k}\right)  \Rightarrow\left(  y_{k},v_{k}\right)  $ is aligned at time
$k$ with $\left(  y_{k}^{\ast},v_{k}^{\ast}\right)  ,$ we have $(\mathbf{y}%
,\mathbf{v})$ is aligned with $(\mathbf{y}^{\ast},\mathbf{v}^{\ast}).$ We have
shown that $(\mathbf{y},\mathbf{v})$ and $(\mathbf{y}^{\ast},\mathbf{v}^{\ast
})$ are feasible for $\mathcal{E}_{z_{1:k}}(\mathbf{x}_{k}^{\ast})$ and
$\mathcal{R}_{z_{1:k}}(\mathbf{x}_{k}^{\ast})$, and that they are aligned. By
Proposition \ref{primdaulfixed}, $(\mathbf{y},\mathbf{v})$ and $(\mathbf{y}%
^{\ast},\mathbf{v}^{\ast})$ are optimal for $\mathcal{E}_{z_{1:k}}%
(\mathbf{x}_{k}^{\ast})$ and $\mathcal{R}_{z_{1:k}}(\mathbf{x}_{k}^{\ast})$, respectively.
\end{proof}

As an immediate application of Proposition \ref{mainthm} we now prove Theorem
\ref{thmsuccessor}.

\paragraph{Proof of Theorem \ref{thmsuccessor}}

Suppose $\mathbf{x}_{k-1}\in\partial S_{k-1}$, $\mathbf{x}_{k-1}^{\ast}\in
C_{S_{k-1}}^{O}(\mathbf{x}_{k-1})$ and \linebreak $\left(  v_{k},y_{k},v_{k}^{\ast}%
,y_{k}^{\ast}\right)  \in M\left(  C\mathbf{x}_{k-1},\left(  \mathbf{x}%
_{k-1}^{\ast}\right)  _{1},z_{k}\right)  $. Now $\mathbf{x}_{k-1}^{\ast}\in
C_{S_{k-1}}^{O}(\mathbf{x}_{k-1})\Rightarrow\left\langle \mathbf{x}%
_{k-1}^{\ast},\mathbf{x}_{k-1}\right\rangle =h_{S_{k-1}}(\mathbf{x}%
_{k-1}^{\ast})$ so, by Proposition \ref{mainthm}, for any $\left(
y_{1:k-1},v_{1:k-1}\right)  \in\arg\max\mathcal{E}_{z_{1:k-1}}(\mathbf{x}%
_{k-1}^{\ast}),$ and any $\left(  y_{1:k-1}^{\ast},v_{1:k-1}^{\ast}\right)
\in\arg\min$$\mathcal{R}_{z_{1:k-1}}(\mathbf{x}_{k-1}^{\ast}),$ there holds
$\left(  y_{1:k},v_{1:k}\right)  =:(\mathbf{y},\mathbf{v})\in\arg
\max\mathcal{E}_{z_{1:k}}(\mathbf{x}_{k}^{\ast})$ and $\left(  y_{1:k}^{\ast
},v_{1:k}^{\ast}\right)  =:(\mathbf{y}^{\ast},\mathbf{v}^{\ast})\in\arg\min
$$\mathcal{R}_{z_{1:k}}(\mathbf{x}_{k}^{\ast})$, where $\mathbf{x}_{k}^{\ast
}=A^{\ast}\mathbf{x}_{k-1}^{\ast}+B^{\ast}y_{k}^{\ast}.$ Hence, by the second
statement of Proposition \ref{basicprop1}, $\mathbf{x}_{k}%
:=\mathbf{x}_{k}(\mathbf{y},\mathbf{v})\in$ \linebreak $\arg\max\mathcal{E}_{z_{1:k}%
}(\mathbf{x}_{k}^{\ast}),$ and by (\ref{ss2}) $\mathbf{x}_{k}=A\mathbf{x}%
_{k-1}+Bv_{k}$. By assumption $\mathbf{x}_{k}^{\ast}\neq\mathbf{0}$, so
$\mathbf{x}_{k}\in\partial S_{k}$. Then Proposition \ref{propconeargmax}
implies $\mathbf{x}_{k}^{\ast}\in C_{S_{k}}^{O}(\mathbf{x}_{k})$. Finally,
$\mathbf{x}_{k}$ being a successor to $\mathbf{x}_{k-1}$ follows from $\left(
y_{1:k-1},v_{1:k-1}\right)  $ being feasible for $\arg\max\mathcal{E}%
_{z_{1:k-1}}(\mathbf{x}_{k-1}^{\ast})$, the second Condition of Definition
\ref{defM} and Proposition \ref{prop2}. \endproof\

The proof of Theorem \ref{mainthm2} follows directly.

\paragraph{Proof of Theorem \ref{mainthm2}}

If $\mathbf{x}_{k}\in X\left(  \mathbf{x}_{k-1},z_{k}\right)  $ then there
exists $\mathbf{x}_{k-1}^{\ast}\in R(\mathbf{x}_{k-1})$ and $\mathbf{q}\in
M\left(  C\mathbf{x}_{k-1},\left(  \mathbf{x}_{k-1}^{\ast}\right)  _{1}%
,z_{k}\right)  $ such that $\left(  \mathbf{x}_{k},\mathbf{x}_{k}^{\ast
}\right)  :=\left(  A\mathbf{x}_{k-1}+Bq_{1},A^{\ast}\mathbf{x}_{k-1}^{\ast
}+B^{\ast}q_{4}\right)  $ and $\mathbf{x}_{k}^{\ast}\neq\mathbf{0}$\textbf{.}
Since $\mathbf{x}_{k-1}^{\ast}\in C_{S_{k-1}}^{O}(\mathbf{x}_{k-1})$,
by\ Theorem \ref{thmsuccessor}, $\mathbf{x}_{k}\in\partial S_{k}$ and
$\mathbf{x}_{k}$ is a successor to $\mathbf{x}_{k-1}$. \endproof\

From Theorem \ref{mainthm2} we have a procedure that is guaranteed to produce
states that lie on the boundary of $S_{k}$ when $T$ is non-empty. But not yet
addressed is the question: Under what conditions are \textit{all }successors
of $\mathbf{x}_{k-1}\in\partial S_{k-1}$ that lie on the boundary of $S_{k}$
contained in $X(\mathbf{x}_{k-1},z_{k})$? Also, there may be points on the
boundary of $S_{k}$ whose precursors are in the interior of $S_{k-1}$. These
issues are examined next.

\section{Finding precursors of a given $\mathbf{x}_{k}\in\partial S_{k}$}

In the previous Section we were given $\mathbf{x}_{k-1}\in\partial S_{k-1}$
and any $\mathbf{x}_{k-1}^{\ast}\in R(\mathbf{x}_{k-1})$, and showed that all
states $\mathbf{x}_{k}\in X\left(  \mathbf{x}_{k-1},z_{k}\right)  $ belong to
$\partial S_{k}.$ The question is now turned around: For a given
$\mathbf{x}_{k}\in\partial S_{k},$ where are the precursors? To answer this
question we need dynamic programming.

\subsection{Dynamic programming applied to the programs $\mathcal{E}_{z_{1:k}%
}(\mathbf{\cdot})$ and $\mathcal{R}_{z_{1:k}}(\mathbf{\cdot})$}

Let $k>2m$ be an integer. Recall, from (\ref{state_def_star}), $\mathbf{x}%
_{k}^{\ast}\left(  \mathbf{y}^{\ast},\mathbf{v}^{\ast}\right)  :=N_{L}%
^{T}y_{k-m+1:k}^{\ast}+D_{L}^{T}v_{k-m+1:k}^{\ast},$ so $\mathbf{x}%
_{k-1}^{\ast}\left(  \mathbf{y}^{\ast},\mathbf{v}^{\ast}\right)  =N_{L}%
^{T}y_{k-m:k-1}^{\ast}+D_{L}^{T}v_{k-m:k-1}^{\ast}$. Also, from
(\ref{statedef}), $\mathbf{x}_{k-1}\left(  \mathbf{y},\mathbf{v}\right)
=B_{T}^{-1}\left[  -D_{U}y_{k-m:k-1}+N_{U}v_{k-m:k-1}\right]  .$

\begin{proposition}
\label{prop_invimage}For any $\mathbf{x}_{k}^{\ast}\in\mathbb{R}^{m},$ any
$\left(  \mathbf{y}^{\ast},\mathbf{v}^{\ast}\right)  \in\arg\min$%
$\mathcal{R}_{z_{1:k}}(\mathbf{x}_{k}^{\ast})$ and any $\left(  \mathbf{y}%
,\mathbf{v}\right)  \in\arg\max\mathcal{E}_{z_{1:k}}^{\prime}(\mathbf{x}%
_{k}^{\ast}),$ there holds\newline(i) $\left(  y_{1:k-1}^{\ast},v_{1:k-1}%
^{\ast}\right)  \in\arg\min$$\mathcal{R}_{z_{1:k-1}}\left(  \mathbf{x}%
_{k-1}^{\ast}\left(  \mathbf{y}^{\ast},\mathbf{v}^{\ast}\right)  \right)  $,
\newline(ii) $\left(  y_{1:k-1},v_{1:k-1}\right)  \in\arg\max\mathcal{E}%
_{z_{1:k-1}}^{\prime}\left(  \mathbf{x}_{k-1}^{\ast}\left(  \mathbf{y}^{\ast
},\mathbf{v}^{\ast}\right)  \right)  $, and$\newline$(iii) $\left\langle
\mathbf{x}_{k-1}^{\ast}\left(  y^{\ast},v^{\ast}\right)  ,\mathbf{x}%
_{k-1}\left(  y,v\right)  \right\rangle =h_{S_{k-1}}(\mathbf{x}_{k-1}^{\ast
}\left(  \mathbf{y}^{\ast},\mathbf{v}^{\ast}\right)  )$.
\end{proposition}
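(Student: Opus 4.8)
The plan is to exploit the dynamic-programming (Bellman-type) structure that is built into the programs $\mathcal{E}_{z_{1:k}}^{\prime}(\mathbf{\cdot})$ and $\mathcal{R}_{z_{1:k}}(\mathbf{\cdot})$, together with the duality pairing of Proposition \ref{primdaulfixed}. The key structural fact is that the decision variables at times $1,\ldots,k-1$ interact with the time-$k$ variables only through the banded Toeplitz constraint equations, and the terminal state $\mathbf{x}_{k}^{\ast}$ of the regulator program is $N_{L}^{T}y_{k-m+1:k}^{\ast}+D_{L}^{T}v_{k-m+1:k}^{\ast}$ while $\mathbf{x}_{k-1}^{\ast}=N_{L}^{T}y_{k-m:k-1}^{\ast}+D_{L}^{T}v_{k-m:k-1}^{\ast}$ is a ``read-off'' of the truncated solution. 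So the first step is to observe that if $(\mathbf{y}^{\ast},\mathbf{v}^{\ast})$ is feasible for $\mathcal{R}_{z_{1:k}}(\mathbf{x}_{k}^{\ast})$, then the truncation $(y_{1:k-1}^{\ast},v_{1:k-1}^{\ast})$ is feasible for $\mathcal{R}_{z_{1:k-1}}(\mathbf{x}_{k-1}^{\ast}(\mathbf{y}^{\ast},\mathbf{v}^{\ast}))$, because deleting the last block row/column of the banded system exactly recovers the length-$(k-1)$ constraint with terminal datum $\mathbf{x}_{k-1}^{\ast}$. The analogous statement holds for the estimator program $\mathcal{E}_{z_{1:k-1}}^{\prime}$ by the same bandedness argument applied to the $D,N$ Toeplitz matrices.

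Next I would prove (i) by a standard ``principle of optimality'' contradiction. Suppose $(y_{1:k-1}^{\ast},v_{1:k-1}^{\ast})$ were \emph{not} optimal for $\mathcal{R}_{z_{1:k-1}}(\mathbf{x}_{k-1}^{\ast}(\mathbf{y}^{\ast},\mathbf{v}^{\ast}))$; then there is a strictly better feasible $(\tilde y_{1:k-1}^{\ast},\tilde v_{1:k-1}^{\ast})$ with the same terminal state $\mathbf{x}_{k-1}^{\ast}$. Splicing the original $(y_{k}^{\ast},v_{k}^{\ast})$ back on gives a point feasible for $\mathcal{R}_{z_{1:k}}(\mathbf{x}_{k}^{\ast})$ — feasibility at the junction is guaranteed because the matching condition across the two blocks depends only on $\mathbf{x}_{k-1}^{\ast}$, which is unchanged — and the objective, being additively separable ($\|\mathbf{y}^{\ast}\|_1+\|\mathbf{v}^{\ast}\|_1+\langle y_{1:k}^{\ast},z_{1:k}\rangle+\langle \mathbf{x}_0^{\ast},\mathbf{x}_0\rangle$ splits as the length-$(k-1)$ cost plus the time-$k$ terms plus the fixed constant), strictly decreases. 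That contradicts optimality of $(\mathbf{y}^{\ast},\mathbf{v}^{\ast})$ for $\mathcal{R}_{z_{1:k}}(\mathbf{x}_{k}^{\ast})$, proving (i). For (ii) one runs the mirror-image argument on the estimator side: $\langle \mathbf{x}^{\ast}_k, \mathbf{x}_k(\mathbf{y},\mathbf{v})\rangle$ depends on $(\mathbf{y},\mathbf{v})$ only through $y_{k-m+1:k},v_{k-m+1:k}$ via (\ref{statedef}), so — after checking whether the objective of $\mathcal{E}_{z_{1:k-1}}^{\prime}$ is the ``truncated part'' of that of $\mathcal{E}_{z_{1:k}}^{\prime}$ — the same splicing/contradiction yields optimality of $(y_{1:k-1},v_{1:k-1})$ for $\mathcal{E}_{z_{1:k-1}}^{\prime}(\mathbf{x}_{k-1}^{\ast}(\mathbf{y}^{\ast},\mathbf{v}^{\ast}))$. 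Statement (iii) then follows for free: by (i), (ii) and Proposition \ref{primdaulfixed} the two truncated solutions are optimal for a primal/dual pair at direction $\mathbf{x}_{k-1}^{\ast}(\mathbf{y}^{\ast},\mathbf{v}^{\ast})$, so their optimal values coincide; and by Proposition \ref{basicprop1}(2) the estimator-optimal $(y_{1:k-1},v_{1:k-1})$ maps under $\mathbf{x}_{k-1}(\cdot,\cdot)$ to a maximizer of $\mathcal{E}_{z_{1:k-1}}(\mathbf{x}_{k-1}^{\ast}(\cdots))$, whence $\langle \mathbf{x}_{k-1}^{\ast}(y^{\ast},v^{\ast}),\mathbf{x}_{k-1}(y,v)\rangle=h_{S_{k-1}}(\mathbf{x}_{k-1}^{\ast}(\mathbf{y}^{\ast},\mathbf{v}^{\ast}))$ by Proposition \ref{propconeargmax}.

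The main obstacle I anticipate is \textbf{(a)} verifying cleanly that the objective of $\mathcal{E}_{z_{1:k}}^{\prime}$ genuinely decomposes as (length-$(k-1)$ part) $+$ (time-$k$ part) in a way compatible with the decomposition of $\mathcal{R}$ — the estimator objective $\langle\mathbf{x}^{\ast},\mathbf{x}\rangle$ is \emph{not} obviously additively separable in the time index the way the $\ell_1$-regulator cost is, so one must pass through duality rather than argue separability directly on the estimator side; and \textbf{(b)} the bookkeeping at the block junction: one has to be careful that the row of the banded constraint that couples block $k-1$ to block $k$ is precisely the defining relation for $\mathbf{x}_{k-1}^{\ast}$ (resp. $\mathbf{x}_{k-1}$), i.e. that the ``state'' read-off in (\ref{state_def_star}) and (\ref{statedef}) is exactly what makes the spliced sequence feasible. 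Both of these are consequences of the Gohberg–Semencul identity (\ref{def:BTmatrix}) and the explicit $A,B,C$ / $A^{\ast},B^{\ast},C^{\ast}$ realizations, so the argument reduces to a careful but routine matrix verification once the conceptual splicing step is in place.
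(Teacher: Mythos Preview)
Your argument for (i) is exactly what the paper does: a Bellman/splicing contradiction, using that the $\mathcal{R}$-cost is additively separable in the time index and that feasibility at the splice depends only on the shared state $\mathbf{x}_{k-1}^{\ast}$.

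For (ii), however, your ``mirror-image'' splicing plan does not work, and you yourself flag this in obstacle (a). The estimator objective $\langle \mathbf{x}_{k}^{\ast},\mathbf{x}_{k}(\mathbf{y},\mathbf{v})\rangle$ does \emph{not} decompose as $\langle \mathbf{x}_{k-1}^{\ast},\mathbf{x}_{k-1}\rangle$ plus a term depending only on $(y_{k},v_{k})$, so replacing $(y_{1:k-1},v_{1:k-1})$ by a better-for-$\mathcal{E}_{z_{1:k-1}}^{\prime}$ competitor with a different $\mathbf{x}_{k-1}$ gives you no control over the time-$k$ objective. The paper sidesteps this entirely and never attempts a direct dynamic-programming step on the estimator side. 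Instead it uses the following clean duality argument: by Proposition~\ref{primdaulfixed} applied at length $k$, the optimal pairs $(\mathbf{y},\mathbf{v})$ and $(\mathbf{y}^{\ast},\mathbf{v}^{\ast})$ are aligned; alignment is defined \emph{componentwise}, so the truncations $(y_{1:k-1},v_{1:k-1})$ and $(y_{1:k-1}^{\ast},v_{1:k-1}^{\ast})$ are automatically aligned as well. Now $(y_{1:k-1},v_{1:k-1})$ is feasible for $\mathcal{E}_{z_{1:k-1}}^{\prime}(\mathbf{x}_{k-1}^{\ast})$, and $(y_{1:k-1}^{\ast},v_{1:k-1}^{\ast})$ is optimal for $\mathcal{R}_{z_{1:k-1}}(\mathbf{x}_{k-1}^{\ast})$ by (i); hence Proposition~\ref{primdaulfixed} applied at length $k-1$ gives optimality of $(y_{1:k-1},v_{1:k-1})$ for $\mathcal{E}_{z_{1:k-1}}^{\prime}(\mathbf{x}_{k-1}^{\ast})$ directly. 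Your phrase ``pass through duality'' is the right instinct, but the specific mechanism --- inheritance of alignment under truncation --- is the missing ingredient, and once you name it the proof of (ii) is two lines with no matrix bookkeeping needed. Part (iii) then follows as you say.
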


\begin{proof}
(i) This follows immediately from the dynamic programming principle of
optimality. If $\left(  y_{1:k-1}^{\ast},v_{1:k-1}^{\ast}\right)  \notin%
\arg\min$$\mathcal{R}_{z_{1:k-1}}\left(  \mathbf{x}_{k-1}^{\ast}\left(
\mathbf{y}^{\ast},\mathbf{v}^{\ast}\right)  \right)  $ then, for any \linebreak $\left(
\bar{y}_{1:k-1}^{\ast},\bar{v}_{1:k-1}^{\ast}\right)  \in\arg\min
\mathcal{R}_{z_{1:k-1}}(\mathbf{x}_{k-1}^{\ast}\left(  \mathbf{y}^{\ast
},\mathbf{v}^{\ast}\right)  ),$ we have $\left(  (\bar{y}_{1:k-1}^{\ast}%
,y_{k}^{\ast}),(\bar{v}_{1:k-1}^{\ast},v_{k}^{\ast})\right)  $ is feasible for
$\mathcal{R}_{z_{1:k}}(\mathbf{x}_{k}^{\ast})$ with a lower cost than $\left(
\mathbf{y}^{\ast},\mathbf{v}^{\ast}\right)  \in\arg\min$$\mathcal{R}_{z_{1:k}%
}(\mathbf{x}_{k}^{\ast}),$ a contradiction.\newline(ii) By (i), $\left(
y_{1:k-1}^{\ast},v_{1:k-1}^{\ast}\right)  \in\arg\min$$\mathcal{R}_{z_{1:k-1}%
}\left(  \mathbf{x}_{k-1}^{\ast}\left(  \mathbf{y}^{\ast},\mathbf{v}^{\ast
}\right)  \right)  $, and Theorem \ref{primdaulfixed} applied to
$\mathcal{R}_{z_{1:k}}(\mathbf{x}_{k}^{\ast})$ and $\mathcal{E}_{z_{1:k}%
}^{\prime}(\mathbf{x}_{k}^{\ast})$ implies that $\left(  y_{1:k-1}%
,v_{1:k-1}\right)  $ is aligned with $\left(  y_{1:k-1}^{\ast},v_{1:k-1}%
^{\ast}\right)  $. Since also $\left(  y_{1:k-1},v_{1:k-1}\right)  $ is
feasible for $\mathcal{E}_{z_{1:k-1}}^{\prime}\left(  \mathbf{x}_{k-1}^{\ast
}\left(  \mathbf{y}^{\ast},\mathbf{v}^{\ast}\right)  \right)  $ Theorem
\ref{primdaulfixed} applied to $\mathcal{R}_{z_{1:k-1}}(\mathbf{x}_{k-1}%
^{\ast})$ and $\mathcal{E}_{z_{1:k-1}}^{\prime}(\mathbf{x}_{k-1}^{\ast})$
gives $\left(  y_{1:k-1},v_{1:k-1}\right)  \in\arg\max\mathcal{E}_{z_{1:k-1}%
}^{\prime}\left(  \mathbf{x}_{k-1}^{\ast}\left(  \mathbf{y}^{\ast}%
,\mathbf{v}^{\ast}\right)  \right)  .$ Furthermore, Proposition
\ref{basicprop} applied to $\mathcal{E}_{z_{1:k-1}}\left(  \mathbf{x}%
_{k-1}^{\ast}\left(  \mathbf{y}^{\ast},\mathbf{v}^{\ast}\right)  \right)  $
gives $\mathbf{x}_{k-1}\left(  y,v\right)  \in\arg\max\mathcal{E}_{z_{1:k-1}%
}\left(  \mathbf{x}_{k-1}^{\ast}\left(  \mathbf{y}^{\ast},\mathbf{v}^{\ast
}\right)  \right)  $, implying (iii).
\end{proof}

In Proposition \ref{prop_invimage} a relationship between evolving, connected
estimator and regulator states is given. Some extra notation is helpful in
such situations. In similar fashion to the use of the terms successor and
precursor for estimator states, we make the following definition for regulator
states. Different definitions of the word successor in Definitions
\ref{defprecursor} and \ref{defprecursorreg} should not cause confusion as the
Definition \ref{defprecursor} is used exclusively for unstarred, estimator
variables, and Definition \ref{defprecursorreg} exclusively for starred
regulator variables. Recall from (\ref{state_def_star}) the definition
$\mathbf{x}_{k-1}^{\ast}(\mathbf{y}^{\ast},\mathbf{v}^{\ast}):=N_{L}%
^{T}y_{k-m:k-1}^{\ast}+D_{L}^{T}v_{k-m:k-1}^{\ast}$.

\begin{definition}
\label{defprecursorreg}The vector $\mathbf{x}_{k}^{\ast}$ is a successor to
the vector $\mathbf{x}_{k-1}^{\ast},$ and $\mathbf{x}_{k-1}^{\ast}$ is a
precursor of $\mathbf{x}_{k}^{\ast},$ if there exists $(\mathbf{y}^{\ast
},\mathbf{v}^{\ast})\in\arg\min\mathcal{R}_{z_{1:k}}(\mathbf{x}_{k}^{\ast})$
and $\mathbf{x}_{k-1}^{\ast}=\mathbf{x}_{k-1}^{\ast}(\mathbf{y}^{\ast
},\mathbf{v}^{\ast}).$
\end{definition}

For the case $\mathbf{x}_{k}^{\ast}\neq\mathbf{0}$ Proposition
\ref{prop_invimage} yields the following useful result.

\begin{theorem}
\label{bigcor}Let $\mathbf{x}_{k-1}$ be a precursor of $\mathbf{x}_{k}%
\in\partial S_{k}$, and let $\mathbf{x}_{k-1}^{\ast}$ be a precursor of
$\mathbf{x}_{k}^{\ast}\in C_{S_{k}}^{O}\left(  \mathbf{x}_{k}\right)  $. Then%
\begin{align*}
\text{(i) }\mathbf{x}_{k-1}  &  \in\operatorname*{int}S_{k-1}\Rightarrow
\mathbf{x}_{k-1}^{\ast}=\mathbf{0}\text{; and}\\
\text{(ii) if }\mathbf{x}_{k-1}  &  \in\partial S_{k-1}\text{ and }%
\mathbf{x}_{k-1}^{\ast}\neq\mathbf{0}\text{, then }\mathbf{x}_{k-1}^{\ast}\in
C_{S_{k-1}}^{O}(\mathbf{x}_{k-1})\text{.}%
\end{align*}
\end{theorem}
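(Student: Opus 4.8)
The plan is to read off both statements directly from Proposition \ref{prop_invimage}, the only new ingredient being the correct bookkeeping of which optimizer plays which role. First I would fix $\mathbf{x}_{k-1}$ a precursor of $\mathbf{x}_k\in\partial S_k$ and $\mathbf{x}_{k-1}^{\ast}$ a precursor of $\mathbf{x}_k^{\ast}\in C_{S_k}^{O}(\mathbf{x}_k)$. By Definition \ref{defprecursor} and Proposition \ref{prop2} there is $(\mathbf{y},\mathbf{v})=(y_{1:k},v_{1:k})$, feasible for $\mathcal{E}_{z_{1:k}}^{\prime}(\mathbf{x}_k^{\ast})$, with $\mathbf{x}_k=\mathbf{x}_k(\mathbf{y},\mathbf{v})$ and $\mathbf{x}_{k-1}=\mathbf{x}_{k-1}(\mathbf{y},\mathbf{v})$; and, as noted, $\mathbf{x}_k\in\partial S_k$ with $\mathbf{x}_k^{\ast}\in C_{S_k}^{O}(\mathbf{x}_k)$ forces $\mathbf{x}_k\in\arg\max\mathcal{E}_{z_{1:k}}(\mathbf{x}_k^{\ast})$ (Proposition \ref{propconeargmax}), hence by part 2) of Proposition \ref{basicprop1} $(\mathbf{y},\mathbf{v})\in\arg\max\mathcal{E}_{z_{1:k}}^{\prime}(\mathbf{x}_k^{\ast})$. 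Similarly, by Definition \ref{defprecursorreg} there is $(\mathbf{y}^{\ast},\mathbf{v}^{\ast})\in\arg\min\mathcal{R}_{z_{1:k}}(\mathbf{x}_k^{\ast})$ with $\mathbf{x}_{k-1}^{\ast}=\mathbf{x}_{k-1}^{\ast}(\mathbf{y}^{\ast},\mathbf{v}^{\ast})$. Thus the hypotheses of Proposition \ref{prop_invimage} are met with this common terminal direction $\mathbf{x}_k^{\ast}$.

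Applying Proposition \ref{prop_invimage}(ii)–(iii) to these optimizers gives $(y_{1:k-1},v_{1:k-1})\in\arg\max\mathcal{E}_{z_{1:k-1}}^{\prime}(\mathbf{x}_{k-1}^{\ast})$ and $\langle\mathbf{x}_{k-1}^{\ast},\mathbf{x}_{k-1}(y_{1:k-1},v_{1:k-1})\rangle=h_{S_{k-1}}(\mathbf{x}_{k-1}^{\ast})$, where I am writing $\mathbf{x}_{k-1}^{\ast}$ for $\mathbf{x}_{k-1}^{\ast}(\mathbf{y}^{\ast},\mathbf{v}^{\ast})$. Since $\mathbf{x}_{k-1}(y_{1:k-1},v_{1:k-1})=\mathbf{x}_{k-1}$, we get the single identity $\langle\mathbf{x}_{k-1}^{\ast},\mathbf{x}_{k-1}\rangle=h_{S_{k-1}}(\mathbf{x}_{k-1}^{\ast})$. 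For (i): if $\mathbf{x}_{k-1}\in\operatorname*{int}S_{k-1}$, then part 2) of Proposition \ref{basicprop1} says $(y_{1:k-1},v_{1:k-1})\in\arg\max\mathcal{E}_{z_{1:k-1}}^{\prime}(\mathbf{x}_{k-1}^{\ast})$ is equivalent to $\mathbf{x}_{k-1}\in\arg\max\mathcal{E}_{z_{1:k-1}}(\mathbf{x}_{k-1}^{\ast})$, and part 4) of Proposition \ref{basicprop} then forces $\mathbf{x}_{k-1}^{\ast}=\mathbf{0}$. For (ii): if $\mathbf{x}_{k-1}\in\partial S_{k-1}$ and $\mathbf{x}_{k-1}^{\ast}\neq\mathbf{0}$, the identity $\langle\mathbf{x}_{k-1}^{\ast},\mathbf{x}_{k-1}\rangle=h_{S_{k-1}}(\mathbf{x}_{k-1}^{\ast})$ together with the second part of Proposition \ref{propconeargmax} gives exactly $\mathbf{x}_{k-1}^{\ast}\in C_{S_{k-1}}^{O}(\mathbf{x}_{k-1})$.

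The one point that needs care — and is the only real obstacle — is that Definition \ref{defprecursor} and Definition \ref{defprecursorreg} a priori allow the estimator precursor $\mathbf{x}_{k-1}$ and the regulator precursor $\mathbf{x}_{k-1}^{\ast}$ to arise from \emph{different} optimizing pairs, whereas Proposition \ref{prop_invimage} couples $(\mathbf{y},\mathbf{v})$ and $(\mathbf{y}^{\ast},\mathbf{v}^{\ast})$ only through the shared terminal direction $\mathbf{x}_k^{\ast}$, not through each other. I need to check that this is enough: indeed Proposition \ref{prop_invimage} only requires $(\mathbf{y}^{\ast},\mathbf{v}^{\ast})\in\arg\min\mathcal{R}_{z_{1:k}}(\mathbf{x}_k^{\ast})$ and $(\mathbf{y},\mathbf{v})\in\arg\max\mathcal{E}_{z_{1:k}}^{\prime}(\mathbf{x}_k^{\ast})$ independently, so any choice of each will do, and the conclusions (i)–(iii) hold for that choice. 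Once this is granted, the derivation above is immediate. I would also remark that the hypothesis $\mathbf{x}_k^{\ast}\in C_{S_k}^{O}(\mathbf{x}_k)$ is what makes $\mathbf{x}_k^{\ast}\neq\mathbf{0}$, so that Proposition \ref{prop_invimage} (which is stated for arbitrary $\mathbf{x}_k^{\ast}\in\mathbb{R}^m$, including $\mathbf{0}$) is being applied in the nontrivial regime; this is consistent with the sentence preceding the theorem.
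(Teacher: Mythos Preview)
Your proof is correct and follows essentially the same route as the paper's: both obtain a feasible $(\mathbf{y},\mathbf{v})$ from Proposition~\ref{prop2}, upgrade it to an optimizer via $\mathbf{x}_k^\ast\in C_{S_k}^O(\mathbf{x}_k)$, invoke Proposition~\ref{prop_invimage} to conclude $\mathbf{x}_{k-1}\in\arg\max\mathcal{E}_{z_{1:k-1}}(\mathbf{x}_{k-1}^\ast)$, and then read off (i) and (ii) from Propositions~\ref{basicprop} and~\ref{propconeargmax}. Your version is in fact more careful than the paper's, which leaves implicit the step that $(\mathbf{y},\mathbf{v})$ is optimal (not merely feasible) for $\mathcal{E}_{z_{1:k}}^{\prime}(\mathbf{x}_k^\ast)$ and does not flag the independence of the two optimizer choices; your observation that Proposition~\ref{prop_invimage} requires only a shared terminal direction $\mathbf{x}_k^\ast$ is exactly what makes this work.
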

\begin{proof}
Since $\mathbf{x}_{k}$ is a successor to $\mathbf{x}_{k-1}$, by Proposition
\ref{prop2} there exists $\mathbf{y=}y_{1:k}$ and $\mathbf{v=}v_{1:k}$ such
that $\left(  \mathbf{y},\mathbf{v}\right)  $ is feasible for the program
$\mathcal{E}_{z_{1:k}}^{\prime}(\mathbf{\cdot})$, $\mathbf{x}_{k}%
=\mathbf{x}_{k}\left(  \mathbf{y},\mathbf{v}\right)  $, and $\mathbf{x}%
_{k-1}=\mathbf{x}_{k-1}\left(  \mathbf{y},\mathbf{v}\right)  $. By Proposition
\ref{prop_invimage}, $\mathbf{x}_{k-1}\in\arg\max\mathcal{E}_{z_{1:k}%
}(\mathbf{x}_{k-1}^{\ast})$. Then (i) follows from statement 4 of Proposition
\ref{basicprop}, and (ii) follows from the first statement in Proposition
\ref{basicprop} and Proposition \ref{propconeargmax}.
\end{proof}

\subsection{Precursors of $\mathbf{x}_{k}\in\partial S_{k}$ that lie on the
boundary of $S_{k-1}\label{sect_mainthm}$}

This Section is devoted to a proof of Theorem \ref{thmbig1}, which says that
if a state $\mathbf{x}_{k}$ is in a particular subset of the boundary of
$S_{k}$, and is a successor to some state $\mathbf{x}_{k-1}$ on the boundary
of $S_{k-1}$, then determining $X(\mathbf{x}_{k-1},z_{k})$ suffices to produce
$\mathbf{x}_{k}$. Fortunately this subset of the boundary of $S_{k}$ is big
enough to include all vertices of $S_{k}$.

Some preliminary results are required. The first concerns direction vectors in
$C_{S_{k-1}}^{O}(\mathbf{x}_{k-1})$. A simplifying feature of the results in
Theorems \ref{mainthm2} and \ref{thmbig} is that only one element of the cone
$C_{S_{k-1}}^{O}(\mathbf{x}_{k-1})$ is needed to propagate $\mathbf{x}_{k-1}$
to $\mathbf{x}_{k}$, because the set $X\left(  \mathbf{x}_{k-1},z_{k}\right)
$ is constructed from only one such element. In our proofs it is often
convenient to argue using the set $X^{O}$ defined below; the fact that
Theorems \ref{mainthm2} and \ref{thmbig} can be stated simply in terms of $X$
depends on Proposition \ref{propX0X} below.

\begin{definition}
Given any $\mathbf{x}_{k-1}\in\partial S_{k-1}$, the set $X^{O}\left(
\mathbf{x}_{k-1},z_{k}\right)  =X^{O}$ is defined as%
\[
X^{O}:=%
{\displaystyle\bigcup\limits_{\mathbf{x}_{k-1}^{\ast}\in C_{S_{k-1}}%
^{O}\left(  \mathbf{x}_{k-1}\right)  }}
\left\{
\begin{array}
[c]{c}%
\mathbf{x}_{k}:\mathbf{x}_{k}=A\mathbf{x}_{k-1}+Bq_{1},\mathbf{q}\in M\left(
C\mathbf{x}_{k-1},\left(  \mathbf{x}_{k-1}^{\ast}\right)  _{1},z_{k}\right)
\\
\text{and }A^{\ast}\mathbf{x}_{k-1}^{\ast}+B^{\ast}q_{4}\neq\mathbf{0}%
\end{array}
\right\}  \text{.}%
\]
\end{definition}

From Definition \ref{defti} and Proposition \ref{propTi} we have, for an
arbitrarily selected $\mathbf{x}_{k-1}^{\ast}\in R(\mathbf{x}_{k-1})$, that
$X=X\left(  \mathbf{x}_{k-1},z_{k}\right)  $ is given by
\[
X=\left\{  \mathbf{x}_{k}:\mathbf{x}_{k}=A\mathbf{x}_{k-1}+Bq_{1}%
,\mathbf{q}\in M\left(  C\mathbf{x}_{k-1},\left(  \mathbf{x}_{k-1}^{\ast
}\right)  _{1},z_{k}\right)  ,A^{\ast}\mathbf{x}_{k-1}^{\ast}+B^{\ast}%
q_{4}\neq\mathbf{0}\right\}  \text{.}%
\]
The set $X^{O}$ would appear to be bigger than $X$, so the following
Proposition is at first sight surprising.

\begin{proposition}
\label{propX0X}For any $\mathbf{x}_{k-1}\in\partial S_{k-1}$ there holds
$X^{O}\left(  \mathbf{x}_{k-1},z_{k}\right)  =X\left(  \mathbf{x}_{k-1}%
,z_{k}\right)  $.
\end{proposition}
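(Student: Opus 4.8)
The plan is to prove the two inclusions $X \subseteq X^O$ and $X^O \subseteq X$ separately, the first being essentially trivial and the second being the real content.

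\textbf{The inclusion $X \subseteq X^O$.} Pick the element $\mathbf{x}_{k-1}^{\ast} \in R(\mathbf{x}_{k-1})$ used to define $X$. Since $R(\mathbf{x}_{k-1}) \subseteq C_{S_{k-1}}^{O}(\mathbf{x}_{k-1})$, this $\mathbf{x}_{k-1}^{\ast}$ is one of the vectors over which the union defining $X^O$ is taken, so every $\mathbf{x}_k \in X$ arising from it lies in $X^O$. This uses only Proposition \ref{propTi} to know $X$ is well-defined independent of the choice in $R$.

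\textbf{The inclusion $X^O \subseteq X$.} Take $\mathbf{x}_k \in X^O$, so there is \emph{some} $\mathbf{x}_{k-1}^{\ast} \in C_{S_{k-1}}^{O}(\mathbf{x}_{k-1})$ and $\mathbf{q} = (v,y,v^{\ast},y^{\ast}) \in M(C\mathbf{x}_{k-1}, (\mathbf{x}_{k-1}^{\ast})_1, z_k)$ with $\mathbf{x}_k = A\mathbf{x}_{k-1} + Bq_1$ and $A^{\ast}\mathbf{x}_{k-1}^{\ast} + B^{\ast}q_4 \neq \mathbf{0}$. The goal is to show $\mathbf{x}_k \in X$, i.e.\ that the \emph{same} $\mathbf{x}_k$ can be produced using a vector from the distinguished cone piece $R(\mathbf{x}_{k-1})$. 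The key is the observation already exploited in the proof of Proposition \ref{propTi}: in the nine constraints comprising membership in $M$, only the \emph{signs} of $v^{\ast}$ and $y^{\ast}$ constrain $v$ and $y$, and those signs are forced only through the single scalar equation $d_{m+1}v^{\ast} + n_{m+1}y^{\ast} = -(\mathbf{x}_{k-1}^{\ast})_1$. So I would argue by cases on which of $R_1, R_2, R_3$ is the selected $R$. If $(\mathbf{x}_{k-1}^{\ast})_1 = 0$ then $R_1 \neq \emptyset$, so $R = R_1$ and $\mathbf{x}_{k-1}^{\ast}$ itself lies in $R$, giving $\mathbf{x}_k \in X$ directly. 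If $(\mathbf{x}_{k-1}^{\ast})_1 > 0$: either $R_1 = \emptyset$, in which case $R = R_2 \ni \mathbf{x}_{k-1}^{\ast}$ and we are done; or $R_1 \neq \emptyset$, so $R = R_1$, and I must show the same $\mathbf{x}_k$ (equivalently the same value $q_1 = v$, since $\mathbf{x}_k = A\mathbf{x}_{k-1}+Bv$ determines $v$ as $\mathbf{x}_k$ lies off the first coordinate) is attainable from some $\mathbf{\bar{x}}_{k-1}^{\ast} \in R_1$. Here I would show that the feasible $(v,y)$ pairs for a vector in $R_1$ (where $(\mathbf{x}_{k-1}^{\ast})_1 = 0$, so $y^{\ast}, v^{\ast}$ may be forced to a sign pattern, or allowed to vanish) contain the pair arising from the $R_2$ vector; concretely, by Condition 3 of Definition \ref{defM}, when $(\mathbf{x}_{k-1}^{\ast})_1 = 0$ one may always take $v^{\ast} = y^{\ast} = 0$, and the alignment conditions \eqref{align_defv}, \eqref{align_defy} then impose no constraint on $(v,y)$ beyond $|v| \le 1$, $|y - z_k| \le 1$, $y - n_1 v = C\mathbf{x}_{k-1}$ (i.e.\ $(y,v) \in Q \cap L(\mathbf{x}_{k-1})$, in the language of Notation \ref{morenotation}), so \emph{every} successor-producing $(v,y)$ is available. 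The symmetric argument handles $(\mathbf{x}_{k-1}^{\ast})_1 < 0$. The nonvanishing condition $A^{\ast}\mathbf{\bar{x}}_{k-1}^{\ast} + B^{\ast}q_4 \neq \mathbf{0}$ for the replacement vector needs to be checked too, but in the $R = R_1$ case one takes $q_4 = y^{\ast} = 0$ and $\mathbf{\bar{x}}_{k-1}^{\ast} \in R_1 \subseteq C_{S_{k-1}}^{O}(\mathbf{x}_{k-1})$ is nonzero by definition of the cone, and $A^{\ast}$ is invertible (since $d_1 = 1 \neq 0$ makes $\det A^{\ast} = \pm 1/d_{m+1} \neq 0$), so $A^{\ast}\mathbf{\bar{x}}_{k-1}^{\ast} \neq \mathbf{0}$ automatically.

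\textbf{Anticipated main obstacle.} The delicate point is the case $R = R_1$ while the witnessing $\mathbf{x}_{k-1}^{\ast} \in X^O$ came from $R_2$ or $R_3$: I must be certain that \emph{restricting} attention to direction vectors with vanishing first coordinate does not lose any successor $\mathbf{x}_k$. The resolution is precisely that when $(\mathbf{x}_{k-1}^{\ast})_1 = 0$ the set $M$ places no binding sign constraints (one may zero out $v^{\ast}, y^{\ast}$), so the attainable $(v,y)$ — hence the attainable $\mathbf{x}_k$ — is the full intersection $Q \cap L(\mathbf{x}_{k-1})$, which is as large as possible and certainly contains the $(v,y)$ from the $R_2$ or $R_3$ witness. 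I would make this rigorous by noting that for \emph{any} $\mathbf{x}_{k-1}^{\ast} \in C_{S_{k-1}}^{O}(\mathbf{x}_{k-1})$, the $\mathbf{x}_k$-component of the corresponding set in the union defining $X^O$ is contained in $\{A\mathbf{x}_{k-1} + Bv : (y,v) \in Q \cap L(\mathbf{x}_{k-1})\}$ (from Conditions 1 and 2 of Definition \ref{defM} alone), and that when $R = R_1$ this upper bound is exactly $X$; combined with $X \subseteq X^O$, equality follows. When $R = R_2$ or $R = R_3$ the argument of Proposition \ref{propTi} already shows $X$ captures all the relevant $\mathbf{x}_k$, so the two cases together finish the proof.
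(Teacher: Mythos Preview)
Your proposal is correct and follows essentially the same approach as the paper's own proof: both show $X\subseteq X^{O}$ trivially and then, for $X^{O}\subseteq X$, case-split on which $R_i$ contains the witnessing $\mathbf{x}_{k-1}^{\ast}$, handling the delicate subcase (witness in $R_2$ or $R_3$ while $R=R_1$) by replacing $(v^{\ast},y^{\ast})$ with $(0,0)$ and using a vector $\mathbf{\bar{x}}_{k-1}^{\ast}\in R_1$. You are in fact slightly more careful than the paper, since you explicitly verify the nonvanishing condition $A^{\ast}\mathbf{\bar{x}}_{k-1}^{\ast}\neq\mathbf{0}$ via the invertibility of $A^{\ast}$, a point the paper leaves implicit.
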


\begin{proof}
Obviously $X\subseteq X^{O}$, so the proof is complete if it can be shown that
$\mathbf{x}_{k}\in X^{O}\Rightarrow\mathbf{x}_{k}\in X$. We assume
$\mathbf{x}_{k}\in X^{O}$ and, for any $\mathbf{x}_{k-1}^{\ast}\in C_{S_{k-1}%
}^{O}\left(  \mathbf{x}_{k-1}\right)  $, case split the three possibilities
$\mathbf{x}_{k-1}^{\ast}\in R_{i}$. In each case it is shown that
$\mathbf{x}_{k}\in X$.\newline Case (i). If $\mathbf{x}_{k-1}^{\ast}\in R_{1}%
$, then $R\left(  x_{k-1}\right)  =R_{1}\neq\emptyset$, and $\mathbf{x}%
_{k-1}^{\ast}\in R(\mathbf{x}_{k-1})\Rightarrow\mathbf{x}_{k}\in X.$\newline
Case (ii). Now suppose $\mathbf{x}_{k-1}^{\ast}\in R_{2}$. If $R_{1}$ is empty
then $\mathbf{x}_{k-1}^{\ast}\in R_{2}=R(\mathbf{x}_{k-1})\Rightarrow
\mathbf{x}_{k}\in X$. So assume $R=R_{1}\neq\emptyset$. Select any
$\mathbf{\bar{x}}_{k-1}^{\ast}\in R(\mathbf{x}_{k-1})$, so $\left(
\mathbf{\bar{x}}_{k-1}^{\ast}\right)  _{1}=0$. Now%
\begin{align*}
\mathbf{q}  &  \mathbf{=}\left(  q_{1},q_{2},q_{3},q_{4}\right)  \in M\left(
C\mathbf{x}_{k-1},\left(  \mathbf{x}_{k-1}^{\ast}\right)  _{1},z_{k}\right)
\Rightarrow\left(  q_{1},q_{2},0,0\right)  \in M\left(  C\mathbf{x}%
_{k-1},\left(  \mathbf{\bar{x}}_{k-1}^{\ast}\right)  _{1},z_{k}\right) \\
&  \Rightarrow\left(  A\mathbf{x}_{k-1}+Bq_{1},A^{\ast}\mathbf{\bar{x}}%
_{k-1}^{\ast}\right)  \in T\left(  \mathbf{x}_{k-1},\mathbf{\bar{x}}%
_{k-1}^{\ast},z_{k}\right) \\
&  \Rightarrow\mathbf{x}_{k}\in X\left(  T\left(  \mathbf{x}_{k-1}%
,\mathbf{\bar{x}}_{k-1}^{\ast},z_{k}\right)  \right) \\
&  \Rightarrow\mathbf{x}_{k}\in X\left(  \mathbf{x}_{k-1},z_{k}\right)  \text{
by Proposition \ref{propTi}, as required.}%
\end{align*}
Case (iii), that is $\mathbf{x}_{k-1}^{\ast}\in R_{3}$, is similar to case
(ii).\newline It has been shown that, for any $\mathbf{x}_{k-1}^{\ast}\in
C_{S_{k-1}}^{O}(\mathbf{x}_{k-1})$, there holds\newline$\left\{
\mathbf{x}_{k}:\mathbf{x}_{k}=A\mathbf{x}_{k-1}+Bq_{1},\mathbf{q}\in M\left(
C\mathbf{x}_{k-1},\left(  \mathbf{x}_{k-1}^{\ast}\right)  _{1},z_{k}\right)
,A^{\ast}\mathbf{x}_{k-1}^{\ast}+B^{\ast}q_{4}\neq\mathbf{0}\right\}
\subseteq X$, and the result follows.
\end{proof}

Another preparatory result is the following.

\begin{proposition}
\label{prop9}If $\mathbf{x}_{k}\in\partial S_{k}$ and $\mathbf{x}_{k}^{\ast
}\in C_{S_{k}}^{O}(\mathbf{x}_{k})$ then for any $\left(  \mathbf{y}%
,\mathbf{v}\right)  \in\arg\max\mathcal{E}_{z_{1:k}}^{\prime}(\mathbf{x}%
_{k}^{\ast})$ and any $\left(  \mathbf{y}^{\ast},\mathbf{v}^{\ast}\right)
\in\arg\min$$\mathcal{R}_{z_{1:k}}(\mathbf{x}_{k}^{\ast})$ we have\newline%
$\left(  v_{k},y_{k},v_{k}^{\ast},y_{k}^{\ast}\right)  \in M\left(
C\mathbf{x}_{k-1},\left(  \mathbf{x}_{k-1}^{\ast}\right)  _{1},z_{k}\right)
$, where $\mathbf{x}_{k-1}$ is any precursor of $\mathbf{x}_{k}$, and
$\mathbf{x}_{k-1}^{\ast}$ is any precursor of $\mathbf{x}_{k}^{\ast}$.
\end{proposition}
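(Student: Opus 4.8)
The plan is to verify, one at a time, the four defining conditions of $M\left(C\mathbf{x}_{k-1},\left(\mathbf{x}_{k-1}^{\ast}\right)_{1},z_{k}\right)$ in Definition~\ref{defM} for the quadruple $\left(v_{k},y_{k},v_{k}^{\ast},y_{k}^{\ast}\right)$ consisting of the last components of the chosen optimal trajectories, taking for $\mathbf{x}_{k-1}$ and $\mathbf{x}_{k-1}^{\ast}$ the precursors these trajectories induce, namely $\mathbf{x}_{k-1}=\mathbf{x}_{k-1}(\mathbf{y},\mathbf{v})$ and $\mathbf{x}_{k-1}^{\ast}=\mathbf{x}_{k-1}^{\ast}(\mathbf{y}^{\ast},\mathbf{v}^{\ast})$ (these are precursors in the senses of Definitions~\ref{defprecursor} and~\ref{defprecursorreg}). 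Since $\mathbf{x}_{k}\in\partial S_{k}$ the set $S_{k}$ is non-empty, and $k\geq 2m$ by the standing assumption, so Proposition~\ref{primdaulfixed} is available for the pair $\mathcal{E}_{z_{1:k}}^{\prime}(\mathbf{x}_{k}^{\ast})$, $\mathcal{R}_{z_{1:k}}(\mathbf{x}_{k}^{\ast})$; note also $\mathbf{x}_{k}^{\ast}\neq\mathbf{0}$ because $\mathbf{x}_{k}^{\ast}\in C_{S_{k}}^{O}(\mathbf{x}_{k})$.

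Conditions~1--3 are obtained simply by reading off feasibility and the two state-space realizations. Feasibility of $(\mathbf{y},\mathbf{v})$ for $\mathcal{E}_{z_{1:k}}^{\prime}(\mathbf{x}_{k}^{\ast})$ forces $\left\Vert v_{1:k}\right\Vert_{\infty}\leq 1$ and $\left\Vert y_{1:k}-z_{1:k}\right\Vert_{\infty}\leq 1$, hence $\left\vert v_{k}\right\vert\leq 1$ and $\left\vert y_{k}-z_{k}\right\vert\leq 1$, which is Condition~1. For Condition~2, the output equation of the estimator realization~(\ref{ss2}) applied along $(\mathbf{y},\mathbf{v})$ with state $\mathbf{x}_{k-1}=\mathbf{x}_{k-1}(\mathbf{y},\mathbf{v})$ gives $y_{k}=C\mathbf{x}_{k-1}+D_{1}v_{k}=C\mathbf{x}_{k-1}+n_{1}v_{k}$, i.e. $y_{k}-n_{1}v_{k}=C\mathbf{x}_{k-1}$. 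For Condition~3, the output equation~(\ref{adjointstate}) of the regulator realization applied along $(\mathbf{y}^{\ast},\mathbf{v}^{\ast})$ with state $\mathbf{x}_{k-1}^{\ast}=\mathbf{x}_{k-1}^{\ast}(\mathbf{y}^{\ast},\mathbf{v}^{\ast})$ gives $v_{k}^{\ast}=C^{\ast}\mathbf{x}_{k-1}^{\ast}+D_{1}^{\ast}y_{k}^{\ast}=-(\mathbf{x}_{k-1}^{\ast})_{1}/d_{m+1}-n_{m+1}y_{k}^{\ast}/d_{m+1}$, and multiplying through by $d_{m+1}$ produces $d_{m+1}v_{k}^{\ast}+n_{m+1}y_{k}^{\ast}=-(\mathbf{x}_{k-1}^{\ast})_{1}$.

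The only step with real content is Condition~4, the alignment at time~$k$ of $(y_{k},v_{k})$ with $(y_{k}^{\ast},v_{k}^{\ast})$, and here I would invoke the duality of Proposition~\ref{primdaulfixed}: since $(\mathbf{y},\mathbf{v})$ is optimal for $\mathcal{E}_{z_{1:k}}^{\prime}(\mathbf{x}_{k}^{\ast})$ and $(\mathbf{y}^{\ast},\mathbf{v}^{\ast})$ is optimal for $\mathcal{R}_{z_{1:k}}(\mathbf{x}_{k}^{\ast})$, the pair is aligned in the vector-sequence sense introduced after Definition~\ref{def_align}; by definition this means $(y_{j},v_{j})$ is aligned at time~$j$ with $(y_{j}^{\ast},v_{j}^{\ast})$ for every~$j$, and the case $j=k$ is exactly Condition~4. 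With all four conditions checked, $\left(v_{k},y_{k},v_{k}^{\ast},y_{k}^{\ast}\right)\in M\left(C\mathbf{x}_{k-1},\left(\mathbf{x}_{k-1}^{\ast}\right)_{1},z_{k}\right)$.

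I expect the main obstacle to be organisational rather than technical: one must be careful that the scalars $v_{k},y_{k},v_{k}^{\ast},y_{k}^{\ast}$ are paired with the \emph{matching} precursor states $\mathbf{x}_{k-1},\mathbf{x}_{k-1}^{\ast}$ --- those generated by the very trajectories $(\mathbf{y},\mathbf{v})$ and $(\mathbf{y}^{\ast},\mathbf{v}^{\ast})$ under consideration --- so that the state-space identities used in verifying Conditions~2 and~3 genuinely hold; once that alignment of notation is fixed, the proof is substitution into~(\ref{ss2}) and~(\ref{adjointstate}) together with one appeal to Proposition~\ref{primdaulfixed}.
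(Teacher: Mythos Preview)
Your proof is correct and follows essentially the same line as the paper's: verify the four conditions of Definition~\ref{defM} in turn, obtaining Condition~1 from feasibility of $(\mathbf{y},\mathbf{v})$, Conditions~2 and~3 from the output equations~(\ref{ss2}) and~(\ref{adjointstate}) of the two state-space realizations, and Condition~4 from the alignment part of Proposition~\ref{primdaulfixed}. Your closing remark---that the precursors $\mathbf{x}_{k-1}$ and $\mathbf{x}_{k-1}^{\ast}$ must be those induced by the chosen trajectories $(\mathbf{y},\mathbf{v})$ and $(\mathbf{y}^{\ast},\mathbf{v}^{\ast})$ for the state-space identities to apply---is a useful clarification that the paper leaves implicit.
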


\begin{proof}
The proof is complete if it can be shown that the four conditions in
Definition \ref{defM} are satisfied when $s=C\mathbf{x}_{k-1}$ and $t=\left(
\mathbf{x}_{k-1}^{\ast}\right)  _{1}$. The first condition holds because
$\left(  \mathbf{y},\mathbf{v}\right)  $ is feasible for $\mathcal{E}%
_{z_{1:k}}^{\prime}(\mathbf{x}_{k}^{\ast}).$ Since $\mathbf{x}_{k}$ is a
successor to $\mathbf{x}_{k-1},$ by (\ref{ss2}) we have $y_{k}-n_{1}v_{k}=s,$
and $\mathbf{x}_{k}^{\ast}$ being a successor to $\mathbf{x}_{k-1}^{\ast}$
implies, using (\ref{adjointstate}), that $d_{m+1}v_{k}^{\ast}+n_{m+1}%
y_{k}^{\ast}=-t.$ This verifies the second and third conditions. Finally, by
Proposition \ref{primdaulfixed} applied to $\mathcal{E}_{z_{1:k}}^{\prime
}(\mathbf{x}_{k}^{\ast})$ and $\mathcal{R}_{z_{1:k}}(\mathbf{x}_{k}^{\ast})$
it follows that $\left(  y_{k},v_{k}\right)  $ and $\left(  y_{k}^{\ast}%
,v_{k}^{\ast}\right)  $ are aligned at time $k.$
\end{proof}

Notation for a pair of opposing faces of the polytope $S_{k}$ is required.

\begin{notation}
Suppose $\operatorname*{int}S_{k}\neq\emptyset$. Then $F_{k}^{+}:=H^{+}\cap
S_{k}$, \linebreak $H^{+}=\left\{  \mathbf{x}:\left\langle \mathbf{x},B^{\ast
}\right\rangle =h_{S_{k}}(B^{\ast})\right\}  $ and $F_{k}^{-}:=H^{-}\cap
S_{k}$, $H^{-}=\left\{  \mathbf{x}:\left\langle \mathbf{x},-B^{\ast
}\right\rangle =h_{S_{k}}(-B^{\ast})\right\}  $.
\end{notation}
The following result is intuitively obvious but important, so we provide a proof.

\begin{proposition}
\label{proprelint}Let $\mathbf{x}\in\partial S_{k}$. If $\mathbf{x}^{\ast}\in C_{S_{k}%
}^{O}\left(  \mathbf{x}\right)  $
is unique up to multiplication by a positive scalar,
then $\mathbf{x}\in\operatorname*{relint}F$, where $F=S_{k}\cap H$ is a face
of $S_{k}$ and $H$ is the hyperplane with direction $\mathbf{x}^{\ast}$
supporting $S_{k}$ at $\mathbf{x}$\textbf{.}
\end{proposition}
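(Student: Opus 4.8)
The plan is to prove the contrapositive structure directly: assume $\mathbf{x}^{\ast}$ (up to positive scaling) is the unique element of $C^{O}_{S_k}(\mathbf{x})$, let $H$ be the supporting hyperplane with normal $\mathbf{x}^{\ast}$ and $F = S_k \cap H$ the corresponding face, and show $\mathbf{x} \in \operatorname{relint} F$. The key observation is that $\mathbf{x} \in \arg\max \mathcal{E}_{z_{1:k}}(\mathbf{x}^{\ast})$ by Proposition \ref{propconeargmax}, so in fact $F = \arg\max \mathcal{E}_{z_{1:k}}(\mathbf{x}^{\ast})$ is exactly the optimal face, and every point of $F$ has $\mathbf{x}^{\ast}$ in its outer normal cone (again by Proposition \ref{propconeargmax}).

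First I would argue by contradiction: suppose $\mathbf{x} \in F$ but $\mathbf{x} \notin \operatorname{relint} F$. Then $\mathbf{x}$ lies in the relative boundary of $F$, hence in a proper face $G$ of $F$ (faces of a polytope that are not relatively interior points lie in proper subfaces). Since $G$ is a proper face of the face $F$, and $F$ is itself a face of $S_k$, $G$ is a proper face of $S_k$ of strictly lower dimension than $F$. The outer normal cone $C^{O}_{S_k}(\mathbf{x}) = C^{O}_{S_k}(G)$ (the normal cone is constant on the relative interior of $G$, but here I want the normal cone at the specific point $\mathbf{x}$, which for a polytope equals the normal cone of the smallest face containing $\mathbf{x}$, namely $G$). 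The normal cone of a lower-dimensional proper face $G \subsetneq F$ strictly contains the normal cone of $F$: concretely, $C^{O}_{S_k}(G) \supsetneq C^{O}_{S_k}(F) = \{\, t\mathbf{x}^{\ast} : t > 0 \,\}$, because the normal cone of $G$ has dimension $m - \dim G > m - \dim F = 1$ (when $F$ is a facet; more generally $\dim C^{O}_{S_k}(G) = m - \dim G$ and $\dim C^{O}_{S_k}(F) = m - \dim F$, and $\dim G < \dim F$). Hence $C^{O}_{S_k}(\mathbf{x})$ contains directions not positively proportional to $\mathbf{x}^{\ast}$, contradicting uniqueness.

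Alternatively, and perhaps cleaner for this paper's conventions, I would phrase it without invoking dimensions of normal cones: if $\mathbf{x}$ is on the relative boundary of $F$, pick a hyperplane $H'$ within the affine hull of $F$ that supports $F$ at $\mathbf{x}$ but does not contain all of $F$ (exists since $\mathbf{x} \notin \operatorname{relint} F$). Lifting $H'$ to $\mathbb{R}^m$ and combining its normal with $\mathbf{x}^{\ast}$ produces a supporting hyperplane of $S_k$ at $\mathbf{x}$ whose direction is linearly independent from $\mathbf{x}^{\ast}$; both directions (and appropriate positive combinations) lie in $C^{O}_{S_k}(\mathbf{x})$, so this cone is not one-dimensional, contradicting the hypothesis. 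One also needs the trivial direction: $\mathbf{x} \in F$ holds by definition of $F$ since $\langle \mathbf{x}, \mathbf{x}^{\ast}\rangle = h_{S_k}(\mathbf{x}^{\ast})$ as $\mathbf{x}^{\ast} \in C^{O}_{S_k}(\mathbf{x})$.

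The main obstacle is being careful about the polytope-theory facts invoked — specifically, that a point of a polytope not in the relative interior of a face $F$ lies in a proper subface, and that passing to a lower-dimensional face strictly enlarges the outer normal cone. These are standard (see any text on polytopes), but the argument should make explicit that the normal cone at a point $\mathbf{x}$ equals the normal cone of the minimal face containing $\mathbf{x}$, and that $F$ here is genuinely the maximal face on which $\mathbf{x}^{\ast}$ is optimal. Everything else — identifying $F$ with $\arg\max\mathcal{E}_{z_{1:k}}(\mathbf{x}^{\ast})$ and the equivalence $\mathbf{x}^{\ast} \in C^{O}_{S_k}(\mathbf{x}) \Leftrightarrow \langle \mathbf{x}^{\ast},\mathbf{x}\rangle = h_{S_k}(\mathbf{x}^{\ast})$ — is immediate from Proposition \ref{propconeargmax} and the definitions.
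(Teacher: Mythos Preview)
Your argument is correct and follows the same contrapositive route as the paper's proof: assume $\mathbf{x}\notin\operatorname{relint}F$ and produce two linearly independent directions in $C_{S_k}^{O}(\mathbf{x})$. The paper does this more concretely by fixing an irredundant half-space description $S_k=\bigcap_{i=1}^{N}\{\mathbf{y}:\langle\mathbf{x}^{\ast}(i),\mathbf{y}\rangle\le c_i\}$ with pairwise non-proportional normals, observing that $\mathbf{x}\notin\operatorname{relint}F$ forces at least two facet inequalities to be active at $\mathbf{x}$, and noting that the two facet normals $\mathbf{x}^{\ast}(1),\mathbf{x}^{\ast}(2)$ both lie in $C_{S_k}^{O}(\mathbf{x})$, contradicting uniqueness. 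Your version reaches the same contradiction via the face lattice and the dimension formula $\dim C_{S_k}^{O}(G)=m-\dim G$ for the minimal face $G$ through $\mathbf{x}$; this is slightly heavier machinery but has the advantage of not needing to first argue that $F$ is in fact a facet (which the paper's proof uses implicitly). Either way the substance is the same.
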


\begin{proof}
The boundary of $S_{k}$ is given by hyperplanes $H=\left\{  y:\left\langle
\mathbf{x}^{\ast}(i),\mathbf{y}\right\rangle =c_{i}\right\}  $ for
$i=1,\ldots,N$ such that $\mathbf{x}^{\ast}(i)\neq\lambda\mathbf{x}^{\ast}(j)$
for all $\lambda>0$ as long as $i\neq j$. So $S_{k}=\cap_{i=1}^{N}\left\{
y:\left\langle \mathbf{x}^{\ast}(i),\mathbf{y}\right\rangle \leq
c_{i}\right\}  $. Suppose $\mathbf{x}\notin\operatorname*{relint}F$. Then
$\mathbf{x}$ is on, at least, two hyperplanes, $H_{1}$ and $H_{2}$ say; that
is $\left\langle \mathbf{x}^{\ast}(i),\mathbf{y}\right\rangle =c_{i}$,
$i=1,2$. It follows that $\left\langle \mathbf{x}^{\ast}(i),\mathbf{y}%
\right\rangle \leq\left\langle \mathbf{x}^{\ast}(i),\mathbf{x}\right\rangle $
for all $\mathbf{y}\in S_{k}$, that is $\left\langle \mathbf{x}_{i}^{\ast
},\mathbf{x}\right\rangle =h_{S_{k}}(\mathbf{x}_{i}^{\ast})$. By the
uniqueness of $\mathbf{x}^{\ast}$ we have $\mathbf{x}^{\ast}(1)=\mu
\mathbf{x}^{\ast}(2)=\mathbf{x}^{\ast}$ for some $\mu>0$, a contradiction.
\end{proof}

We are finally able to prove Theorem \ref{thmbig1}.

\begin{theorem}
\label{thmbig1}Let $\mathbf{x}_{k-1}\in\partial S_{k-1}$ be given. If
$\mathbf{x}_{k}\in\partial S_{k}\setminus\left(  \operatorname*{relint}%
F_{k}^{+}\cup\operatorname*{relint}F_{k}^{-}\right)  $ and $\mathbf{x}_{k}$ is
a successor to $\mathbf{x}_{k-1}$ then $\mathbf{x}_{k}\in X(\mathbf{x}%
_{k-1},z_{k})$. Furthermore, for all $\mathbf{x}_{k-1}^{\ast}\in R\left(
\mathbf{x}_{k-1}\right)  $, there holds $\left(  \mathbf{x}_{k},\mathbf{x}%
_{k}^{\ast}\right)  \in T\left(  \mathbf{x}_{k-1},\mathbf{x}_{k-1}^{\ast
},z_{k}\right)  $, where $\mathbf{x}_{k}^{\ast}\in C_{S_{k}}^{O}\left(
\mathbf{x}_{k}\right)  $.
\end{theorem}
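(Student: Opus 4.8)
The plan is to lift the one-step transition $\mathbf{x}_{k-1}\to\mathbf{x}_k$ to a pair of aligned optimal solutions of $\mathcal{E}_{z_{1:k}}^{\prime}$ and $\mathcal{R}_{z_{1:k}}$, extract the corresponding regulator precursor $\mathbf{x}_{k-1}^{\ast}$, and then apply Proposition \ref{prop9} together with the definition of $X^{O}$; the hypothesis $\mathbf{x}_k\notin\operatorname*{relint}F_k^{+}\cup\operatorname*{relint}F_k^{-}$ is used exactly to ensure $\mathbf{x}_{k-1}^{\ast}\neq\mathbf{0}$, so that it genuinely lies in $C_{S_{k-1}}^{O}(\mathbf{x}_{k-1})$. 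First I would choose a supporting direction at $\mathbf{x}_k$ that is not a scalar multiple of $B^{\ast}$. The set $C_{S_k}^{O}(\mathbf{x}_k)\cup\{\mathbf{0}\}$ is a nonempty convex cone; if it were contained in the line through $B^{\ast}$ it could not be that entire line (both $B^{\ast}$ and $-B^{\ast}$ supporting $S_k$ at $\mathbf{x}_k$ would force $S_k$ into a hyperplane, contradicting $\operatorname*{int}S_k\neq\emptyset$), so it would be one of the rays $\{\lambda B^{\ast}:\lambda>0\}$ or $\{\lambda B^{\ast}:\lambda<0\}$, and Proposition \ref{proprelint} applied with $\mathbf{x}^{\ast}=B^{\ast}$, resp. $\mathbf{x}^{\ast}=-B^{\ast}$, would place $\mathbf{x}_k$ in $\operatorname*{relint}F_k^{+}$, resp. $\operatorname*{relint}F_k^{-}$, contrary to hypothesis. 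Hence I may fix $\mathbf{x}_k^{\ast}\in C_{S_k}^{O}(\mathbf{x}_k)$ with $\mathbf{x}_k^{\ast}$ not a scalar multiple of $B^{\ast}$ (in particular $\mathbf{x}_k^{\ast}\neq\mathbf{0}$).

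Next, since $\mathbf{x}_k$ is a successor to $\mathbf{x}_{k-1}$, Proposition \ref{prop2} provides $(\mathbf{y},\mathbf{v})=(y_{1:k},v_{1:k})$ feasible for $\mathcal{E}_{z_{1:k}}^{\prime}(\cdot)$ with $\mathbf{x}_k=\mathbf{x}_k(\mathbf{y},\mathbf{v})$ and $\mathbf{x}_{k-1}=\mathbf{x}_{k-1}(\mathbf{y},\mathbf{v})$; because $\mathbf{x}_k^{\ast}\in C_{S_k}^{O}(\mathbf{x}_k)$, Propositions \ref{propconeargmax} and \ref{basicprop1} give $(\mathbf{y},\mathbf{v})\in\arg\max\mathcal{E}_{z_{1:k}}^{\prime}(\mathbf{x}_k^{\ast})$. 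Proposition \ref{primdaulfixed} then yields $(\mathbf{y}^{\ast},\mathbf{v}^{\ast})=(y_{1:k}^{\ast},v_{1:k}^{\ast})\in\arg\min\mathcal{R}_{z_{1:k}}(\mathbf{x}_k^{\ast})$ aligned with $(\mathbf{y},\mathbf{v})$. Put $\mathbf{x}_{k-1}^{\ast}:=\mathbf{x}_{k-1}^{\ast}(\mathbf{y}^{\ast},\mathbf{v}^{\ast})$, a precursor of $\mathbf{x}_k^{\ast}$ in the sense of Definition \ref{defprecursorreg}; feasibility of $(\mathbf{y}^{\ast},\mathbf{v}^{\ast})$ together with the regulator recursion (\ref{adjointstate1}) gives $\mathbf{x}_k^{\ast}=A^{\ast}\mathbf{x}_{k-1}^{\ast}+B^{\ast}y_k^{\ast}$. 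Since $\mathbf{x}_k^{\ast}$ is not a scalar multiple of $B^{\ast}$, this identity forces $\mathbf{x}_{k-1}^{\ast}\neq\mathbf{0}$, and Theorem \ref{bigcor}(ii) (using $\mathbf{x}_{k-1}\in\partial S_{k-1}$) then yields $\mathbf{x}_{k-1}^{\ast}\in C_{S_{k-1}}^{O}(\mathbf{x}_{k-1})$. Now Proposition \ref{prop9}, applied to the optimal pairs $(\mathbf{y},\mathbf{v})$, $(\mathbf{y}^{\ast},\mathbf{v}^{\ast})$ and the precursors $\mathbf{x}_{k-1}$, $\mathbf{x}_{k-1}^{\ast}$, gives $\mathbf{q}:=(v_k,y_k,v_k^{\ast},y_k^{\ast})\in M(C\mathbf{x}_{k-1},(\mathbf{x}_{k-1}^{\ast})_1,z_k)$. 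Together with $\mathbf{x}_k=A\mathbf{x}_{k-1}+Bv_k$ and $A^{\ast}\mathbf{x}_{k-1}^{\ast}+B^{\ast}y_k^{\ast}=\mathbf{x}_k^{\ast}\neq\mathbf{0}$, the definition of the set $X^{O}$ gives $\mathbf{x}_k\in X^{O}(\mathbf{x}_{k-1},z_k)$, and Proposition \ref{propX0X} upgrades this to $\mathbf{x}_k\in X(\mathbf{x}_{k-1},z_k)$.

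For the final assertion, take any $\mathbf{x}_{k-1}^{\ast}\in R(\mathbf{x}_{k-1})$; by Proposition \ref{propTi} the set $X(\mathbf{x}_{k-1},z_k)$ may be computed with this representative, so $\mathbf{x}_k\in X(\mathbf{x}_{k-1},z_k)$ supplies $\mathbf{q}\in M(C\mathbf{x}_{k-1},(\mathbf{x}_{k-1}^{\ast})_1,z_k)$ with $\mathbf{x}_k=A\mathbf{x}_{k-1}+Bq_1$ and $\mathbf{x}_k^{\ast}:=A^{\ast}\mathbf{x}_{k-1}^{\ast}+B^{\ast}q_4\neq\mathbf{0}$, whence $(\mathbf{x}_k,\mathbf{x}_k^{\ast})\in T(\mathbf{x}_{k-1},\mathbf{x}_{k-1}^{\ast},z_k)$; since $\mathbf{x}_{k-1}^{\ast}\in R(\mathbf{x}_{k-1})\subseteq C_{S_{k-1}}^{O}(\mathbf{x}_{k-1})$, Theorem \ref{thmsuccessor} then gives $\mathbf{x}_k^{\ast}\in C_{S_k}^{O}(\mathbf{x}_k)$, as required. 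The main obstacle I anticipate is the first step: articulating precisely why discarding the relative interiors of exactly the two faces $F_k^{+},F_k^{-}$ is the right hypothesis and threading it through to $\mathbf{x}_{k-1}^{\ast}\neq\mathbf{0}$ — were $\mathbf{x}_{k-1}^{\ast}$ to vanish it would fall outside $C_{S_{k-1}}^{O}(\mathbf{x}_{k-1})$, which excludes the origin by definition, and the whole backward trace would collapse. The remaining steps are essentially assembly of results already established.
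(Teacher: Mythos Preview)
Your proof is correct and follows essentially the same route as the paper: choose $\mathbf{x}_k^{\ast}\in C_{S_k}^{O}(\mathbf{x}_k)$ not a scalar multiple of $B^{\ast}$ (via Proposition \ref{proprelint}), pull back to a nonzero regulator precursor $\mathbf{x}_{k-1}^{\ast}$, invoke Theorem \ref{bigcor}(ii) and Proposition \ref{prop9} to land in $M$, then pass from $X^{O}$ to $X$ via Proposition \ref{propX0X}. Your treatment of the ``Furthermore'' clause is in fact more careful than the paper's, which concludes $(\mathbf{x}_k,\mathbf{x}_k^{\ast})\in T(\mathbf{x}_{k-1},\mathbf{x}_{k-1}^{\ast},z_k)$ only for the specific precursor $\mathbf{x}_{k-1}^{\ast}$ it constructs, whereas you explicitly recover the statement for every $\mathbf{x}_{k-1}^{\ast}\in R(\mathbf{x}_{k-1})$ by re-reading $X$ through Proposition \ref{propTi} and then appealing to Theorem \ref{thmsuccessor}.
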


\begin{proof}
By the contrapositive of Proposition \ref{proprelint}, if $\mathbf{x}_{k}%
\in\partial S_{k}\setminus\left(  \operatorname*{relint}F_{k}^{+}%
\cup\operatorname*{relint}F_{k}^{-}\right)  $ then there exists $\mathbf{x}%
_{k}^{\ast}\in C_{S_{k}}^{O}\left(  \mathbf{x}_{k}\right)  $ where
$\mathbf{x}_{k}^{\ast}\neq\alpha B^{\ast}$ for any scalar $\alpha$. Now any
precursor $\mathbf{x}_{k-1}^{\ast}$ of $\mathbf{x}_{k}^{\ast}$ satisfies
$\mathbf{x}_{k}^{\ast}=A^{\ast}\mathbf{x}_{k-1}^{\ast}+B^{\ast}y^{\ast}$ for
some scalar $y^{\ast}$, so $\mathbf{x}_{k-1}^{\ast}\neq\mathbf{0}$\textbf{.
}By Theorem \ref{bigcor} $\mathbf{x}_{k-1}^{\ast}\in C_{S_{k-1}}^{O}\left(
\mathbf{x}_{k-1}\right)  $. The fact that $\mathbf{x}_{k}$ is a successor to
$\mathbf{x}_{k-1}$ implies $\mathbf{x}_{k}=A\mathbf{x}_{k-1}+Bv$ for some
scalar $v$. By Proposition \ref{prop9}, there holds\newline$\left(
v,C\mathbf{x}_{k-1}+n_{1}v,-\left(  \left(  \mathbf{x}_{k-1}^{\ast}\right)
_{1}+n_{m+1}y^{\ast}\right)  /d_{m+1},y^{\ast}\right)  \in M\left(
C\mathbf{x}_{k-1},\left(  \mathbf{x}_{k-1}^{\ast}\right)  _{1},z_{k}\right)
$, implying, by Theorem \ref{thmsuccessor}, that $\left(  \mathbf{x}%
_{k},\mathbf{x}_{k}^{\ast}\right)  \in T\left(  \mathbf{x}_{k-1}%
,\mathbf{x}_{k-1}^{\ast},z_{k}\right)  $ and $\mathbf{x}_{k}\in X^{O}\left(
\mathbf{x}_{k-1},z_{k}\right)  $. Then $\mathbf{x}_{k}\in X(\mathbf{x}%
_{k-1},z_{k})$ by Proposition \ref{propX0X}.
\end{proof}

\subsection{Precursors of $\mathbf{x}_{k}\in\partial S_{k}$ that lie in the
interior of $S_{k-1}\label{sectinterior}$}

This Section is concerned with propagating the interior of $S_{k-1}$.
Understanding this is necessary in order to identify which states on the
boundary of $S_{k}$ have precursors on the boundary of $S_{k-1}$. Only then
will we be able to guarantee, by using also Theorem \ref{thmbig1}, that all
vertices of $S_{k}$ belong to $X(\mathbf{x}_{k-1},z_{k})$ for some
$\mathbf{x}_{k-1}\in\partial S_{k-1}$.

\begin{theorem}
\label{thm_int}(i) Suppose $\mathbf{x}_{k-1}\in\operatorname*{int}S_{k-1}$. If
$\mathbf{x}_{k}\in\partial S_{k}$ is a successor to $\mathbf{x}_{k-1},$ then
precisely one of $\mathbf{x}_{k}\in\operatorname*{relint}F_{k}^{+}$ or
$\mathbf{x}_{k}\in\operatorname*{relint}F_{k}^{-}$ must hold.\newline(ii) If
$\mathbf{x}_{k}\in\partial S_{k}\setminus\left(  \operatorname*{relint}%
F_{k}^{+}\cup\operatorname*{relint}F_{k}^{-}\right)  $ then all precursors
$\mathbf{x}_{k-1}$ of $\mathbf{x}_{k}$ satisfy $\mathbf{x}_{k-1}\in\partial
S_{k-1}$.
\end{theorem}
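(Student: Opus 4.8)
The plan is to establish part~(i) first and then derive part~(ii) from it by contraposition. For~(ii): suppose $\mathbf{x}_k\in\partial S_k\setminus(\operatorname*{relint}F_k^+\cup\operatorname*{relint}F_k^-)$ admits a precursor $\mathbf{x}_{k-1}$ with $\mathbf{x}_{k-1}\notin\partial S_{k-1}$. A precursor lies in $S_{k-1}$ by Definition~\ref{defprecursor}, so $\mathbf{x}_{k-1}\in\operatorname*{int}S_{k-1}$, and $\mathbf{x}_k$ is then a successor to $\mathbf{x}_{k-1}$; part~(i) would force $\mathbf{x}_k\in\operatorname*{relint}F_k^+\cup\operatorname*{relint}F_k^-$, a contradiction. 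Hence the whole argument reduces to~(i).

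For~(i), the core step I would isolate is the claim that \emph{every} $\mathbf{x}_k^\ast\in C_{S_k}^O(\mathbf{x}_k)$ is a nonzero scalar multiple of $B^\ast$. To prove it, fix such an $\mathbf{x}_k^\ast$ (the cone is nonempty and excludes $\mathbf{0}$ by definition). Since $S_k\neq\emptyset$, Proposition~\ref{primdaulfixed} ensures that $\mathcal{R}_{z_{1:k}}(\mathbf{x}_k^\ast)$ has an optimal solution $(\mathbf{y}^\ast,\mathbf{v}^\ast)$, which by Definition~\ref{defprecursorreg} furnishes a precursor $\mathbf{x}_{k-1}^\ast:=\mathbf{x}_{k-1}^\ast(\mathbf{y}^\ast,\mathbf{v}^\ast)$ of $\mathbf{x}_k^\ast$. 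Since $\mathbf{x}_{k-1}$ is a precursor of $\mathbf{x}_k\in\partial S_k$ and $\mathbf{x}_{k-1}\in\operatorname*{int}S_{k-1}$, Theorem~\ref{bigcor}(i) gives $\mathbf{x}_{k-1}^\ast=\mathbf{0}$. The terminal state of the trajectory $(\mathbf{y}^\ast,\mathbf{v}^\ast)$ is $\mathbf{x}_k^\ast$, so~(\ref{adjointstate1}) yields $\mathbf{x}_k^\ast=A^\ast\mathbf{x}_{k-1}^\ast+B^\ast y_k^\ast=B^\ast y_k^\ast$, where $y_k^\ast$ denotes the $k$th entry of $\mathbf{y}^\ast$. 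As $\mathbf{x}_k^\ast\neq\mathbf{0}$, this forces $B^\ast\neq\mathbf{0}$ and $y_k^\ast\neq0$, proving the claim.

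Granting the claim, the convex cone $C_{S_k}^O(\mathbf{x}_k)\cup\{\mathbf{0}\}$ is contained in the line $\mathbb{R}B^\ast$; it cannot fill that line, since otherwise $B^\ast$ and $-B^\ast$ would both support $S_k$ at $\mathbf{x}_k$, making $\langle B^\ast,\cdot\rangle$ constant on $S_k$ and contradicting $\operatorname*{int}S_k\neq\emptyset$ (which is presupposed by the notation $F_k^\pm$). Hence $C_{S_k}^O(\mathbf{x}_k)$ is one of the two open rays $\mathbb{R}_{>0}B^\ast$ or $\mathbb{R}_{<0}B^\ast$, i.e.\ it is unique up to a positive scalar. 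Proposition~\ref{proprelint} then places $\mathbf{x}_k$ in $\operatorname*{relint}F$ with $F=S_k\cap H$ and $H$ the supporting hyperplane in the direction $B^\ast$ (respectively $-B^\ast$); but that hyperplane is exactly $H^+$ (respectively $H^-$), so $\mathbf{x}_k\in\operatorname*{relint}F_k^+$ (respectively $\operatorname*{relint}F_k^-$). Finally, $\operatorname*{int}S_k\neq\emptyset$ together with $B^\ast\neq\mathbf{0}$ gives $h_{S_k}(B^\ast)>-h_{S_k}(-B^\ast)$, so $F_k^+$ and $F_k^-$ are disjoint and so are their relative interiors; exactly one alternative therefore holds.

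The step I expect to require the most care is the passage from ``$C_{S_k}^O(\mathbf{x}_k)\subseteq\mathbb{R}B^\ast$'' to ``$\mathbf{x}_k\in\operatorname*{relint}F_k^\pm$'': one must rule out the degenerate full-line case via $\operatorname*{int}S_k\neq\emptyset$ and then invoke Proposition~\ref{proprelint} with the correct identification of the supporting hyperplane as $H^+$ or $H^-$. Everything else is bookkeeping with the precursor/successor definitions, the regulator state recursion, and the definition of the faces $F_k^\pm$.
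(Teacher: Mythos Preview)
Your proof is correct and follows essentially the same approach as the paper: for~(i), show via Theorem~\ref{bigcor}(i) and the regulator recursion~(\ref{adjointstate1}) that every $\mathbf{x}_k^\ast\in C_{S_k}^O(\mathbf{x}_k)$ is a nonzero multiple of $B^\ast$, then invoke Proposition~\ref{proprelint}. The only organizational difference is that you obtain~(ii) as the contrapositive of~(i), whereas the paper reproves~(ii) directly by picking $\mathbf{x}_k^\ast\in C_{S_k}^O(\mathbf{x}_k)$ not proportional to $B^\ast$ and running the same dual-precursor argument in reverse; your packaging is cleaner, and your explicit handling of why the cone is a single ray (via $\operatorname*{int}S_k\neq\emptyset$) and why $F_k^+\cap F_k^-=\emptyset$ fills in details the paper leaves implicit.
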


\begin{proof}
(i) Suppose $\mathbf{x}_{k-1}\in\operatorname*{int}S_{k-1}$ has a successor
$\mathbf{x}_{k}\in\partial S_{k}.$ For any $\mathbf{x}_{k}^{\ast}\in C_{S_{k}%
}^{O}\left(  \mathbf{x}_{k}\right)  $, and any precursor $\mathbf{x}%
_{k-1}^{\ast}$ of $\mathbf{x}_{k}^{\ast}$, by Theorem \ref{bigcor} we have
$\mathbf{x}_{k-1}^{\ast}=\mathbf{0.}$ Thus all precursors of any
$\mathbf{x}_{k}^{\ast}\in C_{S_{k}}^{O}\left(  \mathbf{x}_{k}\right)  $ are
the zero vector so, by (\ref{adjointstate1}), any $\mathbf{x}_{k}^{\ast}\in
C_{S_{k}}^{O}\left(  \mathbf{x}_{k}\right)  $ must be of the form $\pm\alpha
B^{\ast}$ for some non-zero scalar $\alpha$. This means that $\mathbf{x}_{k}$
must lie either in the face $F_{k}^{+}$, or in the face $F_{k}^{-}$. In fact
either $\mathbf{x}_{k}\in\operatorname*{relint}F_{k}^{+}$ or $\mathbf{x}%
_{k}\in\operatorname*{relint}F_{k}^{-}$ must hold because, up to
multiplication by a positive scalar, $B^{\ast}(-B^{\ast})$ in the definition
of $F_{k}^{+}(F_{k}^{-})$ is unique, and Proposition \ref{proprelint} implies
$\mathbf{x}_{k}\in\operatorname*{relint}F_{k}^{+}\cup\operatorname*{relint}%
F_{k}^{-}$.

To show (ii), assume $\mathbf{x}_{k}\in\partial S_{k}\setminus\left(
\operatorname*{relint}F_{k}^{+}\cup\operatorname*{relint}F_{k}^{-}\right)  .$
By the definitions of $F_{k}^{+}$ and $F_{k}^{-}$, there exists $\mathbf{x}%
_{k}^{\ast}\neq\alpha B^{\ast}$, $\alpha\neq0$, for which $\mathbf{x}%
_{k}^{\ast}\in C_{S_{k}}^{O}(\mathbf{x}_{k})$. For any precursor
$\mathbf{x}_{k-1}^{\ast}$ of $\mathbf{x}_{k}^{\ast}$ there exists $y^{\ast}$
for which $\mathbf{x}_{k}^{\ast}=A^{\ast}\mathbf{x}_{k-1}^{\ast}+B^{\ast
}y^{\ast}$, so $\mathbf{x}_{k-1}^{\ast}\neq\mathbf{0}$\textbf{. }By
Theorem\textbf{ }\ref{bigcor}, for any precursor $\mathbf{x}_{k-1}$ of
$\mathbf{x}_{k},$ we have $\mathbf{x}_{k-1}\in\partial S_{k-1}$.
\end{proof}

Theorem \ref{thm_int} describes all circumstances under which a point in the
interior of $S_{k-1}$ can propagate to a point on the boundary of $S_{k}$. One
interesting corollary follows from the fact that the face $F_{k}^{+}$ (or
$F_{k}^{-}$) will have empty relative interior if and only if it contains a
single point, that point being a vertex of $S_{k}.$ Hence, if $F_{k}^{+}$ and
$F_{k}^{-}$ each contain a single vertex of $S_{k}$, by Theorem \ref{thm_int}
all precursors of all $\mathbf{x}_{k}\in\partial S_{k}$ are in $\partial
S_{k}$.

\section{Vertex results and discussion\label{Sectend}}

By combining previous results the proof of Theorem \ref{thmbig} can now be given.

\paragraph{Proof of Theorem \ref{thmbig}}

Although there may exist $\mathbf{x}_{k-1}\in S_{k-1}$ with no successor, it
is clear from (\ref{wits_recurs}) that every $\mathbf{x}_{k}\in S_{k}$ is a
successor to at least one $\mathbf{x}_{k-1}\in S_{k-1}$. In particular every
vertex of $S_{k}$ has at least one precursor $\mathbf{x}_{k-1}\in S_{k-1}$.
Now all vertices of $S_{k}$ belong to $\partial S_{k}\setminus\left(
\operatorname*{relint}F_{k}^{+}\cup\operatorname*{relint}F_{k}^{-}\right)  $
so, by the second statement of Theorem \ref{thm_int}, any precursor
$\mathbf{x}_{k-1}$ of any vertex of $S_{k}$ satisfies $\mathbf{x}_{k-1}%
\in\partial S_{k-1}$. The Theorem statements then follow from Theorem
\ref{thmbig1}. \endproof\

The ability to propagate
exactly any state on the boundary of $S_{k-1},$ along with the direction of
supporting hyperplanes, is obviously useful. We conclude with some remarks on how the results in this paper might be used
to update $S_{k-1}$ to the whole of $S_{k}$. How best to achieve this in a computationally effective scheme requires further work.

Suppose $\partial S_{k-1}$ is known. By Theorem \ref{thmbig} all vertices of
$S_{k}$ have precursors in $\partial S_{k-1}$. It would be useful to be able
to identify these precursors, so all vertices of $S_{k}$ can be found. Some of
these precursors are themselves vertices of $S_{k-1}$, so it makes sense to
use Theorem \ref{thmbig1} to find all of the successors of vertices of
$S_{k-1}$ that lie in $\partial S_{k}$. But some of the vertices of $S_{k}$
may be successors to states that are not vertices of $S_{k-1}.$ It is believed that the results in this paper will provide the tools needed to locate them.
This is a topic for future research.

Another issue is the propagation of directions of supporting hyperplanes. To
continue the recursion from $S_{k}$ to $S_{k+1}$, for precursors
$\mathbf{\bar{x}}_{k}$ of vertices $\mathbf{x}_{k+1}$ of $S_{k+1}$,\ an
element of each $R(\mathbf{\bar{x}}_{k})$ is needed. In principle this is
known if $S_{k}$ is known, because $S_{k}$ determines all $C_{S_{k}}%
^{O}\left(  \mathbf{\bar{x}}_{k}\right)  $. However, finding even one element
of $R(\mathbf{\bar{x}}_{k})$ knowing only the vertex set of $S_{k}$ is not a
computationally simple task. From the dual recursion we have at least one
element of $C_{S_{k}}^{O}\left(  \mathbf{\bar{x}}_{k}\right)  $. If this
element happens to be in $R(\mathbf{\bar{x}}_{k})$ then $\mathbf{x}_{k+1}$ is
easily found. It is not yet clear how best to proceed if no element of
$R(\mathbf{\bar{x}}_{k})$ is readily available. This also is a topic for
future work.


\Appendix\section*{}

\paragraph{Proof of Proposition \ref{primdaulfixed}}

After expressing the program $\mathcal{R}_{z_{1:k}}(\mathbf{x}^{\ast})$ as an
equivalent linear program, the standard duality result in asymmetric form (\cite{LUEN-84} p. 86, 96) is used:%
\begin{equation}%
\begin{array}
[c]{cc}%
\text{Primal} & \text{Dual}\\%
\begin{array}
[c]{c}%
\min\mathbf{c}^{T}\mathbf{x}\\
\text{s. t. }A\mathbf{x}=\mathbf{b}\\
\mathbf{x}\geq0
\end{array}
&
\begin{array}
[c]{c}%
\max\mathbf{\lambda}^{T}\mathbf{b}\\
\text{s. t. }A^{T}\mathbf{\lambda}\leq\mathbf{c}%
\end{array}
\text{,}%
\end{array}
\label{asymdual}%
\end{equation}
where \textit{complementary slackness} holds: Let $\mathbf{x}$ and
$\mathbf{\lambda}$ be feasible solutions for the primal and dual problems,
respectively. A necessary and sufficient condition that they both be optimal
solutions is that for all $i$

i) $x_{i}>0\Rightarrow a_{i}^{T}\lambda=c_{i}$ (where $a_{i}^{T}$ is the i'th
row of $A^{T}$)

ii) $x_{i}=0\Leftarrow a_{i}^{T}\lambda<c_{i}.$

Note that the use of the symbol $\mathbf{x}$ for the primal decision variable
in (\ref{asymdual}) is different from the use of the symbols $\mathbf{x}_{0}$,
$\mathbf{x}_{0}^{\ast}$, $\mathbf{x}_{k}$, and $\mathbf{x}_{k}^{\ast}$, which
retain their meanings given in the body of the paper.

The program $\mathcal{R}_{z_{1:k}}(\mathbf{x}_{k}^{\ast})$ has a convex
piecewise linear cost function and linear constraints. There is a standard
procedure, which we now follow, for converting such a program to an equivalent
linear programming problem. Introduce new non-negative $k$-dimensional column
vectors $\mathbf{v}^{\ast+},\mathbf{v}^{\ast-},\mathbf{y}^{\ast+}$ and
$\mathbf{y}^{\ast-}$, and put $v_{j}^{\ast}=v_{j}^{\ast+}-v_{j}^{\ast-}$ and
$y_{j}^{\ast}=y_{j}^{\ast+}-y_{j}^{\ast-}.$ At optimality at least one of
$v_{j}^{\ast+},v_{j}^{\ast-}$, and at least one of $y_{j}^{\ast+},y_{j}%
^{\ast-}$, will be zero, so $\left\vert v_{j}^{\ast}\right\vert =v_{j}^{\ast
+}+v_{j}^{\ast-}$ and $\left\vert y_{j}^{\ast}\right\vert =y_{j}^{\ast+}%
+y_{j}^{\ast-}$. Since $\left\langle \mathbf{x}_{0}^{\ast},\mathbf{x}%
_{0}\right\rangle =-\mathbf{x}_{0}^{T}\left[  N_{U}^{T}y_{1:m}^{\ast}%
+D_{U}^{T}v_{1:m}^{\ast}\right]  $, the primal cost function for
$\mathcal{R}_{z_{1:k}}(\mathbf{x}^{\ast})$, namely $\left\Vert y^{\ast
}\right\Vert _{1}+\left\Vert v^{\ast}\right\Vert _{1}+\left\langle
y_{1:k}^{\ast},z_{1:k}\right\rangle +\left\langle \mathbf{x}_{0}^{\ast
},\mathbf{x}_{0}\right\rangle =:J_{pr},$ can be written as
\[
J_{pr}=\left[  \boldsymbol{1}_{4k}+\mathbf{\delta}+\mathbf{\gamma}\right]
\left[
\begin{array}
[c]{c}%
\mathbf{y}^{\ast+}\\
\mathbf{y}^{\ast-}\\
\mathbf{v}^{\ast+}\\
\mathbf{v}^{\ast-}%
\end{array}
\right]
\]
where$\ \boldsymbol{1}_{4k}$ denotes a $4k-$dimensional row vector of ones,
and the row vectors $\mathbf{\delta}$ and $\mathbf{\gamma}$ are defined by%
\begin{align*}
\mathbf{\delta}  &  :=\left[
\begin{array}
[c]{cccccccc}%
-\mathbf{x}_{0}^{T}N_{U}^{T} & \mathbf{0}_{k-m} & \mathbf{x}_{0}^{T}N_{U}^{T}
& \mathbf{0}_{k-m} & -\mathbf{x}_{0}^{T}D_{U}^{T} & \mathbf{0}_{k-m} &
\mathbf{x}_{0}^{T}D_{U}^{T} & \mathbf{0}_{k-m}%
\end{array}
\right] \\
\mathbf{\gamma}  &  :=\left[
\begin{array}
[c]{ccc}%
z_{1:k}^{T} & -z_{1:k}^{T} & \mathbf{0}_{2k}%
\end{array}
\right]  ,
\end{align*}
where $\mathbf{0}_{k-m}$ denotes a $\left(  k-m\right)  $-dimensional row
vector of zeros.

The constraints for the program $\mathcal{R}_{z_{1:k}}(\mathbf{x}^{\ast})$\ in
terms of the new variables are%
\begin{align*}
\left[
\begin{array}
[c]{cccc}%
N_{k}^{T} & -N_{k}^{T} & D_{k}^{T} & -D_{k}^{T}%
\end{array}
\right]  \left[
\begin{array}
[c]{c}%
\mathbf{y}^{\ast+}\\
\mathbf{y}^{\ast-}\\
\mathbf{v}^{\ast+}\\
\mathbf{v}^{\ast-}%
\end{array}
\right]   &  =\left[
\begin{array}
[c]{c}%
0\\
\vdots\\
0\\
\mathbf{x}_{k}^{\ast}%
\end{array}
\right] \\
y_{j}^{\ast+},y_{j}^{\ast-},v_{j}^{\ast+},v_{j}^{\ast-}  &  \geq0.
\end{align*}
The matrices $D_{k}$ $(N_{k})$ are defined in Section \ref{sectnotprelim}, and
$D_{k}^{T}$ $(N_{k}^{T})$ denotes the transpose of $D_{k}$ $(N_{k}).$

In (\ref{asymdual}) put
\begin{align}
A  &  =\left[
\begin{array}
[c]{cccc}%
N_{k}^{T} & -N_{k}^{T} & D_{k}^{T} & -D_{k}^{T}%
\end{array}
\right] \label{notation_dual}\\
\mathbf{x}  &  =\left[
\begin{array}
[c]{cccc}%
\mathbf{y}^{\ast+T} & \mathbf{y}^{\ast-T} & \mathbf{v}^{\ast+T} &
\mathbf{v}^{\ast-T}%
\end{array}
\right]  ^{T},\text{ }c^{T}=\boldsymbol{1}_{4k}+\mathbf{\delta}+\mathbf{\gamma
}\nonumber\\
\mathbf{b}  &  =\left[
\begin{array}
[c]{cccc}%
0 & \ldots & 0 & \mathbf{x}_{k}^{\ast T}%
\end{array}
\right]  ^{T}.\nonumber
\end{align}

Then by (\ref{asymdual}) the dual of a program equivalent to $\mathcal{R}%
_{z_{1:k}}(\mathbf{x}^{\ast})$ is%
\begin{equation}%
\begin{array}
[c]{c}%
\max\limits_{\mathbf{\lambda}\in{\mathbb{R}}^{k}}\left\langle \mathbf{\lambda
}_{k-m+1:k},\mathbf{x}_{k}^{\ast}\right\rangle \\
\left[
\begin{array}
[c]{c}%
N_{k}\\
-N_{k}\\
D_{k}\\
-D_{k}%
\end{array}
\right]  \mathbf{\lambda}\leq\left[  \boldsymbol{1}_{4k}+\mathbf{\delta
}+\mathbf{\gamma}\right]  ^{T}.
\end{array}
\label{eq12}%
\end{equation}
We now show that this program is equivalent to $\mathcal{E}_{z_{1:k}%
}(\mathbf{x}^{\ast}).$

Put%
\begin{equation}
\mathbf{v}:=D_{k}\mathbf{\lambda}+\left[
\begin{array}
[c]{c}%
D_{U}\mathbf{x}_{0}\\
0
\end{array}
\right]  ;\text{ }\mathbf{y}:=N_{k}\mathbf{\lambda}+\left[
\begin{array}
[c]{c}%
N_{U}\mathbf{x}_{0}\\
0
\end{array}
\right]  \label{eq13}%
\end{equation}
so%
\begin{equation}
\left[
\begin{array}
[c]{c}%
N_{k}\\
-N_{k}\\
D_{k}\\
-D_{k}%
\end{array}
\right]  \mathbf{\lambda}-\mathbf{\delta}^{T}=\left[
\begin{array}
[c]{c}%
\mathbf{y}\\
-\mathbf{y}\\
\mathbf{v}\\
-\mathbf{v}%
\end{array}
\right]  . \label{eq14}%
\end{equation}
Then there exists $\mathbf{\lambda}$ satisfying (\ref{eq13}) if and only if
$\mathbf{v}$ and $\mathbf{y}$ satisfy
\begin{equation}
-N_{k}\mathbf{v}+D_{k}\mathbf{y}=\left[
\begin{array}
[c]{c}%
B_{T}\mathbf{x}_{0}\\
0
\end{array}
\right]  . \label{eq11}%
\end{equation}
To see this, observe that the first $m$ rows of the left hand side of
(\ref{eq11}) are \linebreak $-N_{L}\left[  D_{L}\mathbf{\lambda}+D_{U}\mathbf{x}%
_{0}\right]  +D_{L}\left[  N_{L}\mathbf{\lambda}+N_{U}\mathbf{x}_{0}\right]
=\left[  -N_{L}D_{U}+D_{L}N_{U}\right]  \mathbf{x}_{0}=B_{T}\mathbf{x}_{0}%
,\ $and the other rows of (\ref{eq11}) follow from (\ref{def:BTmatrix}).

Next we show $\left\langle \mathbf{\lambda}_{k-m+1:k},\mathbf{x}^{\ast
}\right\rangle =\left\langle \mathbf{x}_{k}\left(  \mathbf{y},\mathbf{v}%
\right)  ,\mathbf{x}^{\ast}\right\rangle .$ This is true because
\begin{align*}
\mathbf{x}_{k}\left(  \mathbf{y},\mathbf{v}\right)   &  =\left(  B_{T}\right)
^{-1}\left[  N_{U}v_{k-m+1:k}-D_{U}y_{k-m+1:k}\right] \\
&  =\left(  B_{T}\right)  ^{-1}\left[  N_{U}D_{L}\lambda_{k-m+1:k}-D_{U}%
N_{L}\lambda_{k-m+1:k}\right]  \text{ by (\ref{eq13})}\\
&  =\lambda_{k-m+1:k}\text{ by (\ref{def:BTmatrix}).}%
\end{align*}
It remains only to show that the alignment conditions of the Theorem statement
hold. The inequalities $\left\vert v_{j}\right\vert \leq1$ and $\left\vert
y_{j}-z_{j}\right\vert \leq1$ follow from (\ref{eq14}) and the inequalities
(\ref{eq12}). The other inequalities in Definition \ref{def_align} follow
directly from the complementary slackness conditions for (\ref{asymdual}) when
the associations (\ref{notation_dual}) are made.

\bibliographystyle{siam}
\def\cprime{$'$} \def\cprime{$'$} \def\cprime{$'$}

\end{document}